\def\sizesmallfig{0.30}
\def\sizetinyfig{0.24}
\newcommand{\R}{\mathbb{R}}
\newcommand{\N}{\mathbb{N}}
\newcommand{\x}{\mathbf{x}}
\newcommand{\y}{\mathbf{y}}
\newcommand{\z}{\mathbf{z}}
\newcommand{\f}{\mathbf{f}}
\def\Q{\mathbf{Q}}
\def\q{\mathbf{q}}
\newcommand{\M}{\mathbf{M}}
\def\f{f}
\def\p{\mathbf{p}}
\def\S{\mathbf{S}}
\def\B{\mathbf{B}}
\def\K{\mathbf{K}}
\def\F{\mathbf{F}}
\newcommand{\A}{\mathbf{A}}
\newcommand{\nlift}{n_{\text{lift}}}
\newcommand{\Slift}{\S_{\text{lift}}}
\newcommand{\dlift}{r_{\text{lift}}}
\DeclareMathOperator{\vol}{vol}
\theoremstyle{plain}
\newtheorem{theorem}{Theorem}[section]
\newtheorem{lemma}[theorem]{Lemma}
\newtheorem{proposition}[theorem]{Proposition}
\newtheorem{corollary}[theorem]{Corollary}
\theoremstyle{definition}
\newtheorem{definition}[theorem]{Definition}
\newtheorem{assumption}[theorem]{Assumption}
\newtheorem{example}{Example}
\theoremstyle{remark}
\title{\bf Semidefinite approximations of projections and polynomial images of semi-algebraic sets}
\begin{document}
\author{ Victor Magron$^{1}$ \and Didier Henrion$^{2,3,4}$ \and Jean-Bernard Lasserre$^{2,3}$}

\footnotetext[1]{Circuits and Systems Group, Department of Electrical and Electronic Engineering,
Imperial College London, South Kensington Campus, London SW7 2AZ, UK.}
\footnotetext[2]{CNRS; LAAS; 7 avenue du colonel Roche, F-31400 Toulouse; France.}
\footnotetext[3]{Universit\'e de Toulouse;  LAAS, F-31400 Toulouse, France.}
\footnotetext[4]{Faculty of Electrical Engineering, Czech Technical University in Prague,
Technick\'a 2, CZ-16626 Prague, Czech Republic}

\date{\today}

\maketitle

\begin{abstract}
Given a compact semi-algebraic set $\S \subset \R^n$ and a polynomial map $\f : \R^n \to \R^m$, we consider the problem of approximating the image set $\F=\f(\S) \subset \R^m$. This includes in particular the projection of $\S$ on $\R^m$ for $n \geq m$.
Assuming that $\F \subset \B$, with $\B \subset \R^m$ being a ``simple'' set (e.g. a box or a ball), we provide two methods to compute certified outer approximations of $\F$. 
Method 1 exploits the fact that  $\F$ can be defined with an existential quantifier, while 
Method 2 computes approximations of the support of image measures. 
The two methods output a sequence of superlevel sets defined with a single polynomial that yield explicit outer approximations of $\F$. Finding the coefficients of this polynomial boils down to computing an optimal solution of a convex semidefinite program. We provide guarantees of strong convergence to $\F$ in $L_1$ norm on $\B$, when the degree of the polynomial approximation tends to infinity. Several examples of applications are provided, together with numerical experiments.
\end{abstract}
\paragraph{Keywords}
semi-algebraic sets; semidefinite programming; moment relaxations; polynomial sums of squares.
\section{Introduction}
\label{sec:intro}
Consider a polynomial map $\f : \R^n \to \R^m$, $\x \mapsto \f(\x) := (f_1(\x), \dots, f_m(\x)) \in \R^m[\x]$ 
of degree $d:=\max \{\deg f_1,\ldots,\deg f_m\}$ and a compact
basic semi-algebraic set
\begin{equation}
\label{eq:defS}
 \S :=  \{\x \in \R^n : g_1^\S(\x)  \geq 0, \dots, g_{n^\S}^\S(\x) \geq 0 \}
\end{equation} 
defined by polynomials $g_1^\S,\ldots,g_{n^\S}^\S \in \R[\x]$.

Since $\S$ is compact, the image set
\[
\F:=f(\S)
\]
is included in a basic compact semi-algebraic set $\B$, assumed to be ``simple'' (e.g. a box or a ball)
and described by
\begin{equation}
\label{eq:defB} 
\B :=  \{\y \in \R^m : g_1^\B(\y)  \geq 0, \dots, g_{n^\B}^\B(\y) \geq 0 \}
 \end{equation} 
for some polynomials $g_1^\B,\ldots,g_{n^\B}^\B \in \R[\y]$.

The purpose of this paper is to approximate $\F$, the image of $\S$ under the polynomial map $f$, with superlevel sets of single polynomials of fixed degrees. One expects the approximation to be {\em tractable}, i.e. to be able to control the degree of the polynomials used to define the approximations. This appears to be quite a challenging problem since the polynomial map $f$ and the set $\S$ can be both complicated. 

This problem includes two important special cases. The first problem is to approximate the projection of $\S$ on $\R^m$ for $n \geq m$. The second problem is the approximation of Pareto curves in the context of multicriteria optimization. In~\cite{orl14}, we reformulate this second problem through parametric polynomial optimization, which can be solved using a hierarchy of semidefinite approximations. The present work proposes an alternative solution via approximations of polynomial images of semi-algebraic sets.

In the case of semi-algebraic set projections, notice that computer algebra algorithms provide an exact description of the projection.
 %
These algorithms are based on real quantifier elimination (see e.g.~\cite{Tarski51,Collins74,Basu96}). 
For state-of-the-art computer algebra algorithms for quantifier elimination, we refer the interested reader to the survey~\cite{Basu14survey} and the references therein. Quantifier elimination can be performed with the famous cylindrical algebraic decomposition algorithm. 
For a finite set of $s$ polynomials in $n$ variables, the (time) computational complexity of this algorithm is bounded by $(sd)^{2^{O(n)}}$, thus doubly exponential~\cite{Collins74,Wuthrich76}.
In~\cite{Grigorev1988} an algorithm was proposed to find real elements of semi-algebraic sets in sub-exponential time. The {\em Block Elimination Algorithm} is a singly exponential algorithm to eliminate one block of $n$ variables out of $n+m$ variables, with a  complexity bounded by $s^{n+1} d^{\, O(n+m)}$ (see~\cite[Chapter 14]{Basu06} for the formalization of this algorithm).
For applications that satisfy certain additional assumptions (e.g. radicality, equidimensionality, etc.), one can use the variant quantifier elimination method proposed in~\cite{journals/jsc/HongD12}, which is less computationally demanding.

Providing approximation algorithms for quantifier elimination is interesting on its own because it may provide simpler answers than exact methods, with a more reasonable computational cost.
On the one hand, we do not require an exact description
of the projection but rather a hierarchy of outer approximations with
a guarantee of convergence to the exact projection. On the other hand, the present methodology only requires the following assumptions: 1) the set $\S$ is compact and 2) either the semi-algebraic set $\S$ or $\F$ (resp. $\B \backslash \F$) has nonempty interior. 
%
\paragraph*{Contribution and general methodology}
We provide two methods to approximate the image of semi-algebraic sets under polynomial applications. 
\begin{itemize}
\item Method 1 consists of rewriting $\F$ as a set defined with an existential quantifier.  Then, one can outer approximate $\F$ as closely as desired with a hierarchy of superlevel sets of the form $\F^1_r := \{ \y \in \B : q_r (\y) \geq 0 \}$ for some polynomials $q_r \in \R[\y]$ of increasing degrees $2r$.
\item Method 2 consists of building a hierarchy of relaxations for the infinite dimensional moment problem whose optimal value is the volume of $\F$ and whose optimum is the restriction of the Lebesgue measure on $\F$. Then, one can outer approximate $\F$ as closely as desired with a hierarchy of super level sets of the form $\F^2_r := \{ \y \in \B : w_r (\y) \geq 1 \}$, for some polynomials $w_r \in \R[\y]$ of increasing degrees $2r$. 
\end{itemize}
Method 1 and Method 2 share the following essential features:
\begin{enumerate}
\item The sets $\F^1_r$ and $\F^2_r$ are described with a single polynomial of degree $2 r$.
\item Assuming non-emptiness of the interior of $\S$, resp. of $\F$ and $\B \backslash \F$, one has\\ $\lim_{r\to\infty} \vol (\F^1_r \backslash  \F) = 0$, resp. $\lim_{r\to\infty} \vol (\F^2_r \backslash  \F) = 0$, where $\vol(\cdot)$ stands for the volume or Lebesgue measure.
\item Computing the coefficient vectors of the polynomials $(q_r)_{r \in \N}$, resp. $(w_r)_{r \in \N}$, boils down to finding optimal solutions of a hierarchy of semidefinite programs. The size of these programs is parametrized by the relaxation order $r$ and depends on the number of variables $n$, the number $m$ of components of the polynomial $f$ as well as its degree $d$. For the hierarchy of semidefinite programs associated with Method 1, the number of variables at step $r$ is bounded by $\binom{n + m + 2 r}{2 r}$, with $(n^\S + n^\B + 1)$ semidefinite constraints of size at most $\binom{n + m + r}{r}$. Step $r$ of the semidefinite hierarchy associated with Method 2 involves at most $\binom{n + 2 r d}{2 r d} + 2 \binom{m + 2 r}{2 r}$ variables, $(n_\S+1)$ semidefinite constraints of size at most $\binom{n + r d}{r d}$ and $2 (n^\B+1)$ semidefinite constraints of size at most $\binom{m + r}{r}$. 
\item Data sparsity can be exploited to reduce the overall computational cost.
\end{enumerate}

Method 1 relies on the previous study~\cite{Las13}, in which the author obtains tractable approximations of sets defined with existential quantifiers. The present article provides an extension of the result of~\cite[Theorem 3.4]{Las13}, where one does not require anymore that some set has zero Lebesgue measure. This is mandatory to prove the volume convergence result.

In~\cite{HLS08vol}, the authors consider the problem of approximating the volume of a general compact basic semi-algebraic set. The initial problem is then reformulated as an infinite dimensional linear programming (LP) problem, whose unknown is the restriction of the Lebesgue measure on the set of interest. The main idea behind Method 2 is a similar infinite dimensional LP reformulation of the problem, whose unknown is $\mu_1$, the restriction of the Lebesgue measure on $\F$. One ends up in computing a finite number of moments of the measure $\mu_0$ supported on $\S$ such that the image of $\mu_0$ under $f$ is precisely $\mu_1$. Note however that there is an important novelty compared with~\cite{HLS08vol}, in which the set under study is explicitly described as a basic compact semi-algebraic set (i.e. the intersection of superlevel sets of known polynomials), whereas such a description is not known for $\F$.

%
\paragraph*{Structure of the paper}
The paper is organized as follows. Section~\ref{sec:prelim} recalls the basic background about polynomial sum of squares approximations, moment and localizing matrices. Section~\ref{sec:exists} presents our approximation method for existential quantifier elimination (Method 1). Section~\ref{sec:imagemeasure} is dedicated to the support of image measures (Method 2). In Section~\ref{sec:compl}, we analyze the theoretical complexity of both methods and describe how the system sparsity can be exploited. Section~\ref{sec:benchs} presents several examples where  Method 1 and Method 2 are successfully applied.

\section{Notation and Definitions}
\label{sec:prelim}
Let $\R[\x]$ (resp.~$\R_{2r}[\x]$) be the ring of real polynomials (resp. of degree at most $2r$) in the variable
 $\x=(x_1,\ldots,x_n) \in \R^n$, for $r \in \N$. With $\S$ a basic semi-algebraic set as in~\eqref{eq:defS}, we set $r_j^\S := \lceil (\deg g_j^\S ) / 2 \rceil, j = 1, \dots, n^\S$ and with $\B$ a basic semi-algebraic set as in~\eqref{eq:defB}, we set $r_j^\B := \lceil (\deg g_j^\B ) / 2 \rceil, j = 1, \dots, n^\B$.
Let $\Sigma[\x]$ denote the cone of sum of squares (SOS) of polynomial, and let $\Sigma_r[\x]$ denote the cone of
polynomials SOS of degree at most $2 r$, that is $\Sigma_r[\x] := \Sigma[\x] \cap \R_{2r}[\x]$.
 
For the ease of notation, we set $g_0^\S(\x) := 1$ and $g_0^\B(\y) := 1$.
For each $r \in \N$, let $\Q_r(\S)$ (resp. $\Q_r(\B)$) be the $r$-truncated quadratic module (a convex cone) generated by $g_0^\S, \dots, g_{n_\S}^\S$ (resp. $g_0^\B, \dots, g_{n_\B}^\B$):
\begin{align*}
\Q_r ({\S}) & := \Bigl\{\,\sum_{j=0}^{n^\S} s_j(\x) {g_j^\S} (\x) : s_j \in \Sigma_{r -  r_j^\S}[\x], \,  j = 0, \dots, n^\S \, \Bigr\},\\
\Q_r ({\B}) & := \Bigl\{\,\sum_{j=0}^{n^\B} s_j(\y) {g_j^\B} (\y) : s_j \in \Sigma_{r - r_j^\B}[\y], \,  j = 0, \dots, n^\B  \,\Bigr\}.
\end{align*}

Now, we introduce additional notations which are required for Method 1.
Let us first describe the product set
\begin{equation}
\label{eq:defK} 
\K := \S \times \B = \{(\x, \y) \in \R^{n+m} : g_1 (\x, \y)  \geq 0, \dots, g_{n^\K}(\x, \y) \geq 0 \}  \subset \R^{n+m},
\end{equation}
with $n^\K := n^\S + n^\B$ and the polynomials $g_j \in \R[\x,\y], j=1,\dots, n^\K$ are defined by:
\[   
g_j (\x, \y) := 
     \begin{cases}
       g_j^\S(\x) &\text{if} \  1 \leq j \leq n^\S,\\
       g_j^\B(\y) &\text{if} \ n^\S + 1 \leq j \leq n^\K.
     \end{cases}
\]
As previously, we set $r_j^\K := \lceil (\deg g_j)/2 \rceil, j = 1, \dots, n^\K$, $g_0(\x, \y) := 1$ and $\Q_r ({\K})$ stands for the $r$-truncated quadratic module generated by the polynomials $g_0, \dots, g_{n^\K}$. 

To guarantee the theoretical convergence of our two methods, we need to assume the existence of the following
algebraic certificates of boundedness of the sets $\S$ and $\B$:
\begin{assumption}
\label{hyp:archimedean}
There exists an integer $j^\S$ (resp. $j^\B$) such that $g_{j^\S}^\S = g^\S := N^\S - \| \x \|_2^2$ (resp. $g_{j^\B}^\B = g^\B := N^\B - \| \y \|_2^2$) for large enough  positive integers $N^\S$ and $N^\B$. 
\end{assumption}

For every $\alpha\in\N^n$ the notation $\x^\alpha$ stands for the monomial $x_1^{\alpha_1}\dots x_n^{\alpha_n}$ and for every $r \in \N$, let $\N^{n}_r := \{ \alpha \in \N^{n} : \sum_{j=1}^{n} \alpha_j \leq r \}$, whose cardinality is $\binom{n+r}{r}$.
One writes a polynomial $p \in \R[\x, \y]$ as follows:
\[(\x, \y) \mapsto p(\x, \y)\,=\,\sum_{(\alpha, \beta)\in\N^{n + m}}\,p_{\alpha \beta} \, \x^\alpha \y^\beta \:, \]
and we identify $p$ with its vector of coefficients $\p=(p_{\alpha \beta})$ in the canonical basis $(\x^\alpha \y^\beta)$, $\alpha\in\N^n$, $\beta\in\N^m$.

Given a real sequence $\z =(z_{\alpha \beta})$, we define the multivariate linear functional $\ell_\z : \R[\x,\y] \to \R$ by $\ell_\z(p) := \sum_{\alpha \beta} p_{\alpha \beta} z_{\alpha \beta}$, for all $p \in \R[\x, \y]$.

\paragraph*{Moment matrix}
The {\it moment} matrix associated with a sequence
$\z=(z_{\alpha \beta})_{(\alpha, \beta) \in \N^{n+m}}$, 
is the real symmetric matrix $\M_r(\z)$ with rows and columns indexed by $\N_r^{n+m}$, and whose entries are defined by: 
\[ 
\M_r(\z)((\alpha, \beta),(\delta, \gamma)) := \ell_\z(\x^{\alpha + \delta} \y^{\beta+\gamma})  , \quad
\forall \alpha, \delta \in \N_r^n ,  \quad
\forall \beta, \gamma \in \N_r^m . 
\]
\paragraph*{Localizing matrix}
The {\it localizing} matrix associated with a sequence
$\z=(z_{\alpha \beta})_{(\alpha, \beta) \in \N^{n+m}}$ and a polynomial $q\in\R[\x, \y]$ (with $q(\x,\y)=\sum_{u,v} q_{u v} \x^u \y^v$)
is the real symmetric matrix $\M_r(q \z)$ with rows and columns indexed by $\N_r^{n+m}$, and whose entries are defined by: 
\[ 
\M_r(q \z) ((\alpha, \beta),(\delta, \gamma)) := \ell_\z(q(\x,\y) \x^{\alpha + \delta} \y^{\beta+\gamma})  , \quad
\forall \alpha, \delta \in \N_r^n ,  \quad
\forall \beta, \gamma \in \N_r^m . 
\]

We define the restriction of the Lebesgue measure on a subset $\A \subset \B$
by $\lambda_\A (d \y) := \mathbf{1}_\A(\y) \, d \y $, 
with $\mathbf{1}_\A : \B \to \{0, 1\}$ denoting the indicator function on $\A$:
\[   
\mathbf{1}_\A (\y) := 
     \begin{cases}
       1 &\text{if } \y \in \A,\\
       0 &\text{otherwise}.
     \end{cases}
\]
The moments of the Lebesgue measure on $\B$ are denoted by
\begin{equation}\label{momb}
z^\B_{\beta} := \int \y^{\beta} \lambda_\B(d\y) \in \R, \quad \beta \in \N^m
\end{equation}
We assume that the bounding set $\B$ is ``simple'' in the following sense:
\begin{assumption} 
The moments~\eqref{momb} of the Lebesgue measure on $\B$ can be explicitly computed using cubature formula for integration.
\end{assumption}

\section{Method 1: existential quantifier elimination}
\label{sec:exists}

\subsection{Semi-algebraic sets defined with existential quantifiers}
\label{subsec:exists}

The set $\F=\f(\S)$ is the image of the compact semi-algebraic set $\S$ under the polynomial map $\f : \S \to \B $, thus
it can be defined with an existential quantifier:
\[
\F = \{ \y \in {\B} : \exists\: \x \in {\S}  \text{ s.t. } h_f(\x, \y)  \geq 0\},
\]
with
\[
h_f : \R^{n+m} \to \R, \quad (\x,\y) \mapsto h_f(\x, \y) := - \|\y - f(\x)\|_2^2 = - \sum_{j=1}^m (y_j - {f_j}(\x))^2.
\]
Let us also define
\[
h : \R^m \to \R, \quad \y \mapsto h(\y) := \sup_{\x \in \S} h_f(\x,\y). 
\]

\begin{theorem}
\label{th:underH}
There exists a sequence of polynomials $(p_r)_{r \in \N} \subset \R[\y]$ such that $p_r(\y) \geq h_f(\x, \y)$ for all $r \in \N$, $\x \in \S$, $\y \in \B$ and such that
\begin{align}
\lim_{r \to \infty} \int  |p_r(\y) -  h(\y)  | \, \lambda_\B (d \y)  = 0 .
\end{align}
\end{theorem}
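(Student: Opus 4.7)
The plan is to construct $p_r$ as a near-optimal solution of the semidefinite relaxation
\begin{equation*}
\rho_r \;:=\; \inf_{p \in \R_{2r}[\y]} \int_\B p(\y)\, \lambda_\B(d\y) \quad \text{subject to} \quad p - h_f \in \Q_r(\K),
\end{equation*}
which is the SOS tightening of the infinite-dimensional linear program $\inf \int_\B p\, \lambda_\B$ over $p \in \R[\y]$ subject to $p(\y) \geq h_f(\x, \y)$ on $\K = \S \times \B$. Under Assumption~\ref{hyp:archimedean} the quadratic module associated with $\K$ is Archimedean, and since the moments of $\lambda_\B$ are computable by the simplicity of $\B$, this is a genuine SDP. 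Any feasible $p$ dominates $h_f(\x, \cdot)$ for every $\x \in \S$, hence dominates $h$ pointwise on $\B$.

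First I would record that $h(\y) = \sup_{\x \in \S} h_f(\x, \y)$ is continuous on $\B$: $h_f$ is jointly continuous on $\R^{n+m}$ and $\S$ is compact, so $h \in L^1(\B, \lambda_\B)$ and Stone--Weierstrass applies. Given $\epsilon > 0$, choose $\tilde p \in \R[\y]$ with $\|\tilde p - h\|_{\infty, \B} \leq \epsilon$. Then on $\K$,
\begin{equation*}
(\tilde p(\y) + 2\epsilon) - h_f(\x, \y) \;\geq\; (h(\y) - \epsilon) + 2\epsilon - h(\y) \;=\; \epsilon \;>\; 0,
\end{equation*}
so Putinar's Positivstellensatz yields an order $r(\epsilon)$ with $(\tilde p + 2\epsilon) - h_f \in \Q_{r(\epsilon)}(\K)$. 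Hence $\tilde p + 2\epsilon$ is feasible for the SDP at every order $r \geq r(\epsilon)$, and
\begin{equation*}
\rho_r \;\leq\; \int_\B (\tilde p + 2\epsilon)\, \lambda_\B(d\y) \;\leq\; \int_\B h\, \lambda_\B(d\y) + 3\epsilon\, \vol(\B).
\end{equation*}
Letting $\epsilon \to 0$ gives $\limsup_{r \to \infty} \rho_r \leq \int_\B h\, \lambda_\B(d\y)$.

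Next, I would select $p_r$ feasible at order $r$ with $\int_\B p_r\, \lambda_\B(d\y) \leq \rho_r + 1/r$. Feasibility of $p_r$ forces $p_r(\y) \geq h_f(\x, \y)$ for every $(\x, \y) \in \K$, so $p_r \geq h$ on $\B$, and thus $|p_r - h| = p_r - h$. Consequently,
\begin{equation*}
\int_\B |p_r(\y) - h(\y)|\, \lambda_\B(d\y) \;=\; \int_\B p_r\, \lambda_\B(d\y) - \int_\B h\, \lambda_\B(d\y) \;\leq\; \rho_r + \tfrac{1}{r} - \int_\B h\, \lambda_\B(d\y) \;\xrightarrow[r \to \infty]{}\; 0,
\end{equation*}
which is the desired $L^1$ convergence together with the pointwise upper bound $p_r \geq h_f$ on $\K$.

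The main obstacle is the Putinar step: the certificate only yields membership in some $\Q_r(\K)$ when the polynomial in question is \emph{strictly} positive on $\K$. The explicit perturbation by $2\epsilon$ simultaneously absorbs the Stone--Weierstrass error and enforces strict positivity, at the cost of the $3\epsilon\, \vol(\B)$ slack in the upper bound, which is harmless since $\epsilon$ is arbitrary. Assumption~\ref{hyp:archimedean} is used in exactly this place. A secondary technical remark is that one need not prove attainment of $\rho_r$: selecting a $1/r$-optimal feasible polynomial is enough for the conclusion.
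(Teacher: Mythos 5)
Your proof is correct, and it is essentially the paper's own route unfolded: the paper disposes of this theorem by citing \cite[Theorem 3.1]{Las13}, and your argument is a self-contained reconstruction of exactly that machinery — uniform Stone--Weierstrass approximation of the value function $h$ (continuous since $h_f$ is jointly continuous and $\S$ is compact and nonempty), an explicit $\epsilon$-shift to obtain strict positivity so that Putinar's Positivstellensatz applies under Assumption~\ref{hyp:archimedean}, and the one-sided $L^1$ bookkeeping $\int|p_r-h|=\int p_r-\int h$, which is the same argument the paper reuses in the proof of the second item of Theorem~\ref{th:sdpexists}. The only point worth adding is that the hierarchy is feasible at every order $r\ge r^{(1)}_{\min}$ because $-h_f=\|\y-f(\x)\|_2^2$ is itself a sum of squares (so $p=0$ is feasible, and can also serve as $p_r$ for the finitely many indices below the minimal order, since $h_f\le 0$); this is immediate and the paper makes the same observation elsewhere.
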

\begin{proof}
The result follows readily from~\cite[Theorem 3.1 (3.4)]{Las13} with the notations $\x \leftarrow \y$, $\y \leftarrow  \x$, $\K \leftarrow  \B \times \S$, $\K_\x \leftarrow  \S \neq \emptyset$ and $\underline{J_f} \leftarrow  - h$, which is lower semi-continuous.
\end{proof}
\begin{theorem}
\label{th:volH}
For each $r \in \N$,  define $\F_r := \{ \y \in \B : p_r(\y) \geq 0 \}$
where the sequence of polynomials $(p_r)_{r \in \N} \subset \R[\y]$ is as in Theorem~\ref{th:underH}. Then $\F_r \supset \F$ and one has
\begin{align}
\label{eq:volconv}
\lim_{r \to \infty} \vol (\F_r \backslash \F) = 0.
\end{align}
\end{theorem}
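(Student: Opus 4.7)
The plan is to exploit the pointwise bound $p_r \geq h$ on $\B$ (which follows from Theorem~\ref{th:underH}) together with the $L^1$ convergence, using a Markov-type inequality to control the superlevel set.

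\textbf{Step 1 (Inclusion $\F \subset \F_r$).} Since $\S$ is compact and $\f$ continuous, for $\y \in \F$ there exists $\x \in \S$ with $\y = \f(\x)$, giving $h_f(\x,\y) = 0$ and hence $h(\y) = 0$ (using $h_f \leq 0$). Theorem~\ref{th:underH} gives $p_r(\y) \geq h_f(\x,\y)$ for all $\x \in \S$, so $p_r(\y) \geq h(\y) = 0$, i.e.~$\y \in \F_r$. This also shows $p_r \geq h$ pointwise on $\B$.

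\textbf{Step 2 (Characterization of $h$).} Because $\S$ is compact and $\f$ continuous, $\F = \f(\S)$ is closed, and the supremum in the definition of $h$ is attained. Therefore $h(\y) = 0$ iff $\y \in \F$, and $h(\y) < 0$ iff $\y \in \B \setminus \F$.

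\textbf{Step 3 (Splitting $\F_r \setminus \F$).} Fix $\epsilon > 0$ and split
\[
\B \setminus \F \,=\, A_\epsilon \cup C_\epsilon, \quad A_\epsilon := \{\y \in \B : h(\y) < -\epsilon\}, \quad C_\epsilon := \{\y \in \B : -\epsilon \leq h(\y) < 0\}.
\]
For $\y \in A_\epsilon \cap \F_r$ one has $p_r(\y) \geq 0$ and $h(\y) < -\epsilon$, hence $|p_r(\y) - h(\y)| = p_r(\y) - h(\y) > \epsilon$ (using $p_r \geq h$). Markov's inequality gives
\[
\vol(A_\epsilon \cap \F_r) \,\leq\, \frac{1}{\epsilon} \int_\B |p_r(\y) - h(\y)|\, \lambda_\B(d\y),
\]
which tends to $0$ as $r \to \infty$ by Theorem~\ref{th:underH}.

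\textbf{Step 4 (Conclusion).} The sets $C_\epsilon$ decrease as $\epsilon \downarrow 0$ with $\bigcap_{\epsilon > 0} C_\epsilon = \{\y \in \B : h(\y) = 0, h(\y) < 0\} = \emptyset$. Since $\vol(\B) < \infty$, dominated convergence yields $\vol(C_\epsilon) \to 0$ as $\epsilon \to 0$. Given $\eta > 0$, pick $\epsilon$ so that $\vol(C_\epsilon) < \eta/2$, then pick $r$ so that $\vol(A_\epsilon \cap \F_r) < \eta/2$. Since $\F_r \setminus \F \subset (A_\epsilon \cap \F_r) \cup C_\epsilon$, we obtain $\vol(\F_r \setminus \F) < \eta$, proving~\eqref{eq:volconv}.

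The main subtlety is that $L^1$ convergence of $p_r$ to $h$ does not a priori imply convergence of the superlevel sets $\{p_r \geq 0\}$; the Markov argument in Step~3 bridges this gap, while the vanishing of the ``near-zero'' buffer $C_\epsilon$ handles the fact that $h$ can approach $0$ on $\B \setminus \F$ near the boundary $\partial\F$.
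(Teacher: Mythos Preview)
Your proof is correct and follows essentially the same approach as the paper's: both decompose $\B\setminus\F$ (equivalently, compare $\F_r$ to the enlarged set $\{\y:h(\y)\geq -\epsilon\}$) and combine the $L^1$ convergence $p_r\to h$ with the pointwise bound $p_r\geq h$ to control the ``far'' part, then use monotone/dominated convergence to shrink the ``buffer'' $C_\epsilon$. The only cosmetic difference is that the paper passes through convergence in measure where you apply Markov's inequality directly, which is a slightly more economical route to the same conclusion.
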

\begin{proof}
Let $r \in \N$. By assumption, one has $p_r(\y) \geq h_f(\x, \y)$ for all $\x \in \S, \y \in \B$. Thus, one has $p_r(\y) \geq h(\y)$ for all $\y \in \B$, which implies that $\F_r \supset \F$.

It remains to prove~\eqref{eq:volconv}.
Let us define $\F(k) := \{ \y \in \B : h(\y) \geq - 1/k \}$. 
First, we show that 
\begin{align}
\label{eq:DkfS}
\lim_{k \to \infty} \vol \F(k) = \vol \F.
\end{align}
For each $k \in \N$, one has $\F(k+1) \subseteq \F(k) \subseteq \B$, thus the sequence of indicator 
functions $(\mathbf{1}_{\F(k)})_{k \in \N}$ is non-increasing and bounded. Next, let us show that for all $\y \in \B$,
$\mathbf{1}_{\F(k)}(\y) \to \mathbf{1}_{\F}(\y)$, as $k \to \infty$:
\begin{itemize}
\item Let $\y \in \F$. By the inclusion $\F \subseteq  \F(k)$, for each $k \in \N$, $\mathbf{1}_{\F(k)}(\y) = \mathbf{1}_{\F}(\y) = 1$ and the result trivially holds.
\item Let $\y \in \B \backslash \F$, so there exists $\epsilon >0$ such that $h(\y)= - \epsilon$. Thus, there exists $k_0 \in \N$ such that for all $k \geq k_0$, $\y \in \B \backslash \F(k)$.
\end{itemize}
Hence, $\mathbf{1}_{\F(k)}(\y) \to \mathbf{1}_{\F}(\y)$ for each $\y \in \B$, as $k \to \infty$ (monotone non-increasing).
By the Monotone Convergence Theorem, $\mathbf{1}_{\F(k)}(\y) \to \mathbf{1}_{\F}(\y)$ for the $L_1$ norm on $\B$ and~\eqref{eq:DkfS} holds.

Next, we prove that for each $k \in \N$,
\begin{align}
\label{eq:DdleqDk}
\lim_{r \to \infty} \vol \F_r \leq  \vol \F(k).
\end{align}

By Theorem~\ref{th:underH} applied to the sequence $(p_r)_{r \in \N}$, one has  $\lim_{r \to \infty} \int|p_r(\y) - h(\y)  | \, \lambda_\B (d \y)  = 0$. Thus, by~\cite[Theorem 2.5.1]{Ash:72RAP}, the sequence $(p_r)_{r \in \N}$ converges to $h$ in measure, i.e. for every $\epsilon > 0$, 
\begin{align}
\label{eq:ash}
\lim_{r \to \infty} \vol ( \{ \y \in \B : |p_r(\y) - h(\y) | \geq \epsilon \} ) = 0.
\end{align}
For every $k \geq 1$, observe that:
\begin{eqnarray}
\label{eq:bndash}
\vol \F_r & = & \vol (\F_r  \cap \{\y \in \B : |p_r(\y) - h(\y) | \geq 1/k \} ) + \nonumber \\
                     &    & \vol (\F_r  \cap \{\y \in \B : |p_r(\y) - h(\y) | < 1/k \} ).            
\end{eqnarray}
It follows from~\eqref{eq:ash} that $\lim_{r \to \infty} \vol (\F_r \cap \{\y \in \B : |p_r(\y) - h(\y) | \geq 1/k \} ) = 0$. 
In addition, for all $r \in \N$,  
\begin{align}
\label{eq:bndHpk}
\vol (\F_r  \cap \{\y \in \B : |p_r(\y) - h(\y) | < 1/k \} ) \leq \vol ( \{\y \in \B : h(\y) \geq -1/k \} ) = \vol \F(k).
\end{align}
Using both~\eqref{eq:bndash} and~\eqref{eq:bndHpk}, and letting $r \to \infty$, yields~\eqref{eq:DdleqDk}.
Thus, we have the following inequalities:
\[
\vol  \F \leq \lim_{r \to \infty} \vol \F_r \leq  \vol \F(k).
\]
Using~\eqref{eq:DkfS} and letting $k \to \infty$ yields the desired result.
\end{proof}

\subsection{Practical computation using semidefinite programming}
\label{subsec:sdpexists}

In this section we show how the sequence of polynomials
of Theorems \ref{th:underH} and \ref{th:volH} can be computed in practice.
Define $r^{(1)}_{\min} := \max \{d, r_1, \dots, r_{n^\K} \}$.
For $r \geq r^{(1)}_{\min}$, consider the following hierarchy of semidefinite programs:
\begin{equation}
\label{eq:primalexists}
\begin{aligned}
p^*_r := \inf\limits_q \quad & \displaystyle\sum_{\beta \in \N_{2 r}^m} q_\beta  z^\B_\beta \\			
\text{s.t.} \quad & q - h_f \in \Q_r(\K), \\
         \quad & q \in \R_{2 r}[\y].
\end{aligned}
\end{equation}		
The semidefinite program dual of \eqref{eq:primalexists} reads:
\begin{equation}
\label{eq:dualexists}
\begin{aligned}
d^*_r := \sup\limits_{\z} \quad &\ell_{\z}(h_f) \\			
\text{s.t.} \quad & \M_r(\z) \succeq 0, \\
\quad & \M_{r - r_j^\K}(g_j \z) \succeq 0,  \quad j=1,\ldots,n^\K, \\
& \ell_{\z}(\y^\beta) = z^\B_\beta,  \quad \forall \beta \in \N_{2 r}^m . \\
\end{aligned}
\end{equation}	
\begin{theorem}
\label{th:sdpexists}
Let $r \geq r^{(1)}_{\min}$ and suppose that Assumption~\ref{hyp:archimedean} holds. Then: 
\begin{enumerate}
\item $p^*_r=d^*_r$, i.e. there is no duality gap between the semidefinite program~\eqref{eq:primalexists} and its dual~\eqref{eq:dualexists}.
\item The semidefinite program~\eqref{eq:dualexists} has an optimal solution. In addition, if $\S$ has nonempty interior, then the semidefinite program~\eqref{eq:primalexists} has an optimal solution $q_r$, and the sequence $(q_r)_{r \in \N}$
converges to $h$ in $L_1$ norm on $\B$:
\begin{equation}
\label{eq:qdcvg}
\lim_{r \to \infty} \int  |q_r(\y) - h(\y) | \, \lambda_\B (d \y) = 0.
\end{equation}
\item Defining the set
\[
\F^1_r := \{ \y \in \B : q_r(\y) \geq 0 \}
\]
it holds that
\[
\F^1_r \supset \F
\]
and
\[
\lim_{r \to \infty} \vol (\F^1_r \backslash \F) = 0.
\]
\end{enumerate}
\end{theorem}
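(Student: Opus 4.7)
The statement decomposes into three parts and the plan is to treat them in order, using conic SDP duality for items (1) and (2), the $L_1$-upper bound from Theorem~\ref{th:underH} for the convergence claim in (2), and Theorem~\ref{th:volH} directly for (3). The key tool underlying (1)--(2) is Slater strict feasibility: I will establish strict feasibility of the primal~\eqref{eq:primalexists} unconditionally, which by standard conic duality yields $p^*_r = d^*_r$ together with attainment of the dual; then, under the extra hypothesis that $\S$ has nonempty interior, I will establish strict feasibility of the dual~\eqref{eq:dualexists}, which yields attainment of the primal.

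To produce a primal Slater point I pick a constant $q^{(0)} \equiv C$ with $C$ large enough that $C - h_f > 0$ on the compact set $\K = \S \times \B$. Assumption~\ref{hyp:archimedean} forces the quadratic modules $\Q(\S)$ and $\Q(\B)$ to be archimedean, hence so is $\Q(\K)$, so by Putinar's Positivstellensatz one has $C - h_f = s_0 + \sum_{j=1}^{n^\K} s_j g_j$ with SOS $s_j$; increasing $C$ slightly and absorbing a term $\varepsilon \sum_{|\alpha|+|\beta|\leq r}(\x^\alpha \y^\beta)^2$ into $s_0$ makes its Gram matrix strictly positive definite, placing $q^{(0)}$ in the relative interior of the feasible cone of~\eqref{eq:primalexists} for every $r$ beyond a certain order. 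To produce a dual Slater point, when $\S$ has nonempty interior I consider the product measure $\mu := \tfrac{1}{\vol\S}\,\lambda_\S \otimes \lambda_\B$, whose moment sequence $\z$ automatically satisfies $\ell_\z(\y^\beta) = z^\B_\beta$. Since both $\S$ and $\B$ have nonempty interior, the support of $\mu$ does too, so by the standard fact that a measure whose support is not contained in a real algebraic variety of degree $\leq 2r$ has a strictly positive-definite truncated moment matrix, $\M_r(\z) \succ 0$; the same argument applied to the measures $g_j \cdot \mu$ (with $g_j > 0$ on the interior of $\K$) gives $\M_{r-r_j^\K}(g_j \z) \succ 0$.

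For the $L_1$ convergence in (2), I use Theorem~\ref{th:underH}. Given $\varepsilon>0$, pick $s$ such that $\int (p_s - h)\,d\lambda_\B \leq \varepsilon$. Then $p_s + \varepsilon - h_f$ is strictly positive on $\K$, so by Putinar again it lies in $\Q_{r_1}(\K)$ for some $r_1$. Hence $p_s + \varepsilon$ is primal-feasible at every order $r \geq \max(s, r_1)$, and monotonicity of $p^*_r$ in $r$ gives
\[
\int h\,d\lambda_\B \;\leq\; p^*_r \;\leq\; \int p_s\,d\lambda_\B + \varepsilon\,\vol\B \;\leq\; \int h\,d\lambda_\B + \varepsilon(1+\vol\B),
\]
where the lower bound uses that any primal-feasible $q$ dominates $h$ pointwise on $\B$. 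Letting $\varepsilon \to 0$ yields $p^*_r \to \int h\,d\lambda_\B$. Since the optimizer $q_r$ also satisfies $q_r \geq h$ on $\B$, one gets $\int|q_r-h|\,d\lambda_\B = p^*_r - \int h\,d\lambda_\B \to 0$, which is~\eqref{eq:qdcvg}. Item~(3) then follows immediately by applying Theorem~\ref{th:volH} to $(q_r)$, which fulfills the hypotheses of Theorem~\ref{th:underH} by construction and by the $L_1$ convergence just proved.

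The main obstacle I expect is the Slater strict feasibility argument done at the level of Gram and truncated moment matrices: upgrading the SOS certificate produced by Putinar to one whose $s_0$ has strictly positive-definite Gram matrix, and showing that the truncated moment and localizing matrices of the product measure $\mu_\S \otimes \lambda_\B$ are strictly positive definite under the nonempty-interior hypothesis, are each short but non-trivial invocations of standard moment-problem and sum-of-squares facts that must be justified explicitly rather than glossed over.
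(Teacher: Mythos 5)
Your handling of the $L_1$ convergence in item (2) and of item (3) is essentially the paper's own argument: perturb the sequence $(p_k)$ of Theorem~\ref{th:underH} by a small constant, invoke Putinar's Positivstellensatz to make the perturbed polynomial primal-feasible at a large enough order, sandwich $p^*_r$ between $\int h\,d\lambda_\B$ and $\int p_k\,d\lambda_\B + O(\epsilon)$ using $q_r \geq h$ on $\B$, and conclude via Theorem~\ref{th:volH}. Your dual Slater point (the normalized product measure on $\S\times\B$, with positive definiteness of the truncated moment and localizing matrices following from the nonempty interior of $\K=\S\times\B$) is also the paper's route to attainment in the primal~\eqref{eq:primalexists}.

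The gap is in item (1) and in the unconditional attainment claim for~\eqref{eq:dualexists}. You derive $p^*_r=d^*_r$ and dual attainment from strict feasibility of~\eqref{eq:primalexists}, obtained by applying Putinar to $C-h_f$ and perturbing the certificate. But Putinar gives no control on the degree of the SOS multipliers, and you concede this yourself (``for every $r$ beyond a certain order''), whereas the theorem asserts the conclusion for \emph{every} $r\geq r^{(1)}_{\min}$. There is no reason the truncated module $\Q_r(\K)$ at low orders should contain a representation of $C-h_f$ with all Gram matrices positive definite; moreover, your repair of ``absorbing $\varepsilon\sum(\x^\alpha\y^\beta)^2$ into $s_0$'' changes the polynomial identity (the added term depends on $\x$, while $q$ may depend only on $\y$), so it must be subtracted \emph{before} applying Putinar, pushing the required order up further still. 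The paper avoids this entirely: it shows that the feasible set of the \emph{dual}~\eqref{eq:dualexists} is nonempty (moments of the Lebesgue measure on $\S\times\B$) and bounded --- Assumption~\ref{hyp:archimedean} forces $\ell_{\z}(x_i^{2k})\leq (N^\S)^k z^\B_0$ and $\ell_{\z}(y_j^{2k})\leq (N^\B)^k z^\B_0$, and the Lasserre--Netzer lemma then bounds every entry of $\z$ --- so the dual optimal set is nonempty and compact at every admissible order, and Trnovsk\'a's sufficient condition yields zero duality gap together with dual attainment. To keep your structure you would either need to adopt that argument, or prove directly that $\Q_r(\K)$ has nonempty interior at every order $r\geq r^{(1)}_{\min}$, which can be done using the ball constraints of Assumption~\ref{hyp:archimedean} via an explicit telescoping identity such as $N^r-x^{2r}=(N-x^2)\sum_{k=0}^{r-1}N^{r-1-k}x^{2k}$, but not via an uncontrolled appeal to Putinar.
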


\begin{proof}
\label{pr:qdcvg}

\begin{enumerate}
\item Let $\mathcal{D}_r$ (resp. $\mathcal{D}_r^*$) stand for the feasible (resp. optimal) solution set of the semidefinite program~\eqref{eq:dualexists}. First, we prove that $\mathcal{D}_r \neq \emptyset$.
Let $\z = (z_{\alpha \beta})_{(\alpha, \beta) \in \N_{2 r}^{n+m}}$ be the sequence moments of $\lambda_\K$, the Lebesgue
measure on $\K=\S\times\B$ . Since the measure is supported on $\K$, the semidefinite constraints $\M_{r}(\z) \succeq 0$, $\M_{r - r_j^\K}(g_j \z) \succeq 0$, $j=1, \dots, n^\K$ are satisfied. By construction, the marginal of $\lambda_\K$ on $\B$ is $\lambda_\B$ and the following equality constraints are satisfied: $\ell_{\z}(\y^\beta) = z^\B_\beta$ for all $\beta \in \N_{2 r}^m$.
Thus, the finite sequence $\z$ lies in $\mathcal{D}_r \neq \emptyset$.

Note that Assumption~\ref{hyp:archimedean} implies that the semidefinite constraints $\M_{r - 1}(g^\S \z) \succeq 0$ and $\M_{r - 1}(g^\B \z) \succeq 0$ both hold. Thus, the first diagonal elements of $\M_{r - 1}(g^\S \z)$ and $\M_{r - 1}(g^\B \z)$ are nonnegative,
and since  $\ell_{\z}(1)  = z^\B_0$, it follows that $\ell_{\z}(x_i^{2 k}) \leq (N^\S)^k z^\B_0$, $i=1,\ldots,n$
and $\ell_{\z}(y_j^{2 k}) \leq (N^\B)^k z^\B_0$, $j=1,\dots,m$, $k=0,\ldots,r$ and 
we deduce from~\cite[Lemma 4.3, p. 111]{LasserreNetzer07SOS} that $|z_{\alpha \beta}|$ is
bounded for all $(\alpha, \beta) \in \N_{2 r}^{n+m}$. Thus, the feasible set $\mathcal{D}_r$ is compact as closed and bounded. Hence, the set $\mathcal{D}_r^*$ is nonempty and bounded. The claim then follows from the sufficient condition of strong duality in~\cite{Trnovska:2005:SDP}.

\item  Assume that $\S$ has nonempty interior, so $\K = \S \times \B$ has also nonempty interior. Thus, the feasible solution $\z$  (defined above) satisfies  $\M_{r}(\z) \succ 0$, $\M_{r - r_j^\K}(g_j \z) \succ 0$, $j=1, \dots, n^\K$, which implies that Slater's condition holds for~\eqref{eq:dualexists}. Note also that the semidefinite program \eqref{eq:primalexists} has the trivial feasible solution $q=0$
since $-h_f$ is SOS by construction.
As a consequence of a now standard result of duality in semidefinite programming (see e.g.~\cite{Vandenberghe94sdp}), the semidefinite program~\eqref{eq:primalexists} has an optimal solution $q_r \in \R_{2 r}[\y]$.

Let us consider a sequence of polynomials $(p_k)_{k\in\N} \subset \R[\y]$ as in Theorem~\ref{th:underH}. Now, fix $\epsilon > 0$. By Theorem~\ref{th:underH}, there exists $k_0 \in \N$  such that 
\begin{align}
\label{eq:qdcvg1}
\int |p_k(\y) - h(\y) | \lambda_\B (d \y)\leq \epsilon / 2,
\end{align}
for all $k \geq k_0$. Then, observe that the polynomial $p^\epsilon_k := p_k + \epsilon / (2 \vol \B)$ satisfies $p^\epsilon_k(\y) - h_f(\x, \y)  > 0$, for all $\x \in \S$, $\y \in \B$.  
For $r \in \N$ large enough, as a consequence of Putinar's Positivstellensatz (e.g.~\cite[Section 2.5]{lasserre2009moments}), there exist $s_0, \dots, s_{n^\K} \in \Sigma[\x, \y]$ such that 
\[
p^\epsilon_k(\y) - h_f(\x, \y) = \sum_{j = 0}^{n^\K} s_j(\x, \y) g_j(\x, \y),
\]
with $\deg (s_j g_j) \leq 2 r$ for $j = 0, \dots, n^\K$. And so, $p^\epsilon_k-h_f$ lies in the $r$-truncated quadratic module $\Q_r(\K)$, which implies that $p^\epsilon_k$ is a feasible solution for Problem~\eqref{eq:primalexists}.
\if{
Let us consider a sequence of polynomials $(p_r)_{r\in\N} \subset \R[\y]$ as in Theorem~\ref{th:underH}. Now, fix $\epsilon > 0$. By Theorem~\ref{th:underH}, there exists $r_0 \in \N$  such that 
\begin{align}
\label{eq:qdcvg1}
\int |p_r(\y) - h(\y) | \lambda_\B (d \y)\leq \epsilon / 2,
\end{align}
for all $r \geq r_0$. Then, observe that the polynomial $p^\epsilon_r := p_r + \epsilon / (2 \vol \B)$ satisfies $p^\epsilon_r(\y) - h_f(\x, \y)  > 0$, for all $\x \in \S$, $\y \in \B$.  Let $r \geq \max \{\lceil (\deg p_r) / 2 \rceil, r^{(1)}_{\min} \}$.
As a consequence of Putinar's Positivstellensatz (e.g.~\cite[Section 2.5]{lasserre2009moments}), there exist $s_0, \dots, s_{n^\K} \in \Sigma[\x, \y]$ such that 
\[
p^\epsilon_r(\y) - h_f(\x, \y) = \sum_{j = 0}^{n^\K} s_j(\x, \y) g_j(\x, \y),
\]
with $\deg (s_j g_j) \leq 2 r$ for $j = 0, \dots, n^\K$. And so, $p^\epsilon_r-h_f$ lies in the $r$-truncated quadratic module $\Q_r(\K)$, which implies that $p^\epsilon_r$ is a feasible solution for Problem~\eqref{eq:primalexists}. 
}\fi
Hence, $q_r$ being an optimal solution of problem~\eqref{eq:primalexists}, the following holds:
\begin{align}
\label{eq:qdcvg2}
\int q_{r} (\y) \lambda_\B (d \y) \leq  \int p^\epsilon_{k} (\y) \lambda_\B (d \y)  = \int [ p_k(\y) + \epsilon / (2 \vol (\B)) ]  \lambda (d \y)   .
\end{align}
Combining~\eqref{eq:qdcvg1} and~\eqref{eq:qdcvg2} yields $\displaystyle\int | q_{r}(\y) - h(\y)  | \lambda (d \y) \leq \epsilon$, concludes the proof.

\item This is a consequence of Theorem \ref{th:volH}.
\end{enumerate}
\end{proof} 
\section{Method 2: support of image measures}
\label{sec:imagemeasure}

Given a compact set $\A \subset \R^n$, let $\mathcal{M}(\A)$  stand for the vector space of finite signed Borel measures supported on $\A$, understood as functions from the Borel sigma algebra $\mathcal{B}(\A)$ to the real numbers. Let $\mathcal{C}(\A)$ stand for the space of continuous functions on $\A$, equipped with the sup-norm (a Banach space).
Since $\A$ is compact,  the topological dual
(i.e. the set of continuous linear functionals) of $\mathcal{C}(\A)$ (equipped with the sup-norm), denoted by $\mathcal{C}(\A)'$,
is (isometrically isomorphically identified with) $\mathcal{M}(\A)$ equipped with the total variation norm, denoted by $\| \cdot\|_{\text{TV}}$.  The cone of non-negative elements of $\mathcal{C}(\A)$, resp. $\mathcal{M}(\A)$, is denoted by $\mathcal{C}_+(\A)$,
resp. $\mathcal{M}_+(\A)$. 
The topology in $\mathcal{C}_+(\A)$ is the strong topology of uniform convergence while the topology in $\mathcal{M}_+(\A)$ is the weak-star topology (see~\cite[Chapter IV]{alexander2002course} or~\cite[Section 5.10]{Luenberger97} for more background on weak-star topology).

Recall that $\lambda_\B$ stand for the Lebesgue measure on $\B$. If $\mu,\nu \in \mathcal{M}(\A)$,
the notation $\mu \ll \nu$ stands for $\mu$ being absolutely continuous w.r.t. $\nu$, whereas
the notation $\mu \leq \nu$ means that $\nu-\mu \in \mathcal{M}_+(\A)$. For background on functional analysis and
measure spaces see e.g. \cite[Section 21.5]{Royden}.
\subsection{LP primal-dual conic formulation}
\label{sec:lpprimaldual}
Given a polynomial application $f : \S \to \B$, the pushforward or image map
\[
f_\# : \mathcal{M}(\S) \to \mathcal{M}(\B)
\]
is defined such that
\[
f_\# \mu_0 (\A) := \mu_0 (\{ \x \in \S : f(\x) \in \A \})
\]
for every set $\A \in \mathcal{B}(\B)$ and every measure $\mu_0 \in \mathcal{M}(\S)$. The measure $f_\# \mu_0 \in \mathcal{M}(\B)$ is then called the \textit{image measure} of $\mu_0$ under $f$, see e.g. \cite[Section 1.5]{AFP00}.


To approximate the image set $\F=f(\S)$, one considers the infinite-dimensional linear programming (LP) problem:
\begin{equation}
\label{eq:lpmeasure}
\begin{aligned}
p^* := \sup\limits_{\mu_0, \mu_1, \hat{\mu}_1} \quad & \int  \mu_1 \\			
\text{s.t.} \quad & \mu_1 + \hat{\mu}_1 = \lambda_\B,\\
\quad & \mu_1 = f_\# \mu_0,\\
\quad & \mu_0 \in  \mathcal{M}_+(\S), \quad  \mu_1, \hat{\mu}_1 \in  \mathcal{M}_+(\B),\\
\end{aligned}
\end{equation}
In the above LP, by definition of the image measure, $\mu_1$ exists
whenever $\mu_0$ is given. The following result gives conditions for $\mu_0$
to exist whenever $\mu_1$ is given.

\begin{lemma}\label{lem:farkas}
Given a measure $\mu_1 \in \mathcal{M}_+(\B)$, there is a measure $\mu_0 \in \mathcal{M}_+(\S)$
such that $f_\#\mu_0 = \mu_1$ if and only if there is no continuous function $v \in \mathcal{C}(\B)$
such that $v(f(\x)) \geq 0$ for all $\x \in \S$ and $\int v(\y) d\mu_1(\y) < 0$.
\end{lemma}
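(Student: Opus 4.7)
The statement is a Farkas-type lemma for the positive cone in the space of Borel measures, so the natural approach is a Hahn--Banach separation argument in the duality pair $(\mathcal{C}(\B),\mathcal{M}(\B))$ equipped with the weak-star topology on $\mathcal{M}(\B)$.

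The ``only if'' direction is immediate: if $\mu_0\in\mathcal{M}_+(\S)$ satisfies $f_\#\mu_0=\mu_1$ and if $v\in\mathcal{C}(\B)$ is such that $v\circ f\geq 0$ on $\S$, then by the change-of-variables formula for image measures,
\[
\int v(\y)\,d\mu_1(\y) \;=\; \int v(f(\x))\,d\mu_0(\x) \;\geq\; 0,
\]
which rules out the existence of a separating $v$.

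For the ``if'' direction, the plan is to show that the convex cone $C:=f_\#(\mathcal{M}_+(\S))\subset\mathcal{M}(\B)$ is closed for the weak-star topology, and then to apply Hahn--Banach. The key step is the closedness of $C$: if $(\mu_0^\alpha)\subset\mathcal{M}_+(\S)$ is a net with $f_\#\mu_0^\alpha\to\nu$ weak-star, then
\[
\mu_0^\alpha(\S) \;=\; (f_\#\mu_0^\alpha)(\B) \;\longrightarrow\; \nu(\B),
\]
so the total variations $\|\mu_0^\alpha\|_{\text{TV}}$ are eventually uniformly bounded. By Banach--Alaoglu, a subnet of $(\mu_0^\alpha)$ converges weak-star to some $\mu_0\in\mathcal{M}_+(\S)$ (using that $\mathcal{M}_+(\S)$ is weak-star closed, as the polar of the cone $\mathcal{C}_+(\S)$). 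Since $v\circ f\in\mathcal{C}(\S)$ for every $v\in\mathcal{C}(\B)$, the map $f_\#$ is weak-star continuous, so $f_\#\mu_0=\nu$, proving $\nu\in C$.

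Once $C$ is known to be weak-star closed, convex, and nonempty (it contains $0$), and provided $\mu_1\notin C$, the Hahn--Banach separation theorem applied in $(\mathcal{C}(\B),\mathcal{M}(\B))$ yields a continuous linear functional, i.e.\ a function $v\in\mathcal{C}(\B)$, with $\int v\,d\mu_1 < \inf_{\nu\in C}\int v\,d\nu$. Because $C$ is a cone, this infimum is either $-\infty$ or $0$; the former case leads directly to a separating $v$, while in the latter case we get $\int v\,d\nu\geq 0$ for every $\nu\in C$. Specializing to $\nu=f_\#\delta_\x$ for $\x\in\S$ gives $v(f(\x))\geq 0$ on $\S$, and combined with $\int v\,d\mu_1<0$ this produces exactly the forbidden separator. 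Contrapositively, the absence of such $v$ forces $\mu_1\in C$, i.e.\ the existence of the required $\mu_0$.

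The main obstacle is the weak-star closedness of $C$, because without it Hahn--Banach only places $\mu_1$ in the closure of $C$. The argument above resolves this via the uniform bound on total variations inherited from $\mu_1(\B)$, which turns the problem into a standard weak-star compactness application; I expect this to be the only nontrivial technical point.
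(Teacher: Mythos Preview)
Your proof is correct and follows essentially the same route as the paper: both reduce the lemma to the weak-star closedness of the image cone $f_\#(\mathcal{M}_+(\S))$ in $\mathcal{M}(\B)$ and then invoke a Farkas-type separation (the paper cites Craven--Koliha's generalized Farkas theorem as a black box, whereas you carry out the Hahn--Banach argument explicitly). Your detailed closedness argument via the total-variation bound and Banach--Alaoglu is precisely the content behind the paper's one-line justification ``follows from continuity of $f$ and compactness of $\S$.''
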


\begin{proof}
This follows from \cite[Theorem 6]{CravenKoliha77} which is an extension
to locally convex topological spaces of the celebrated
Farkas Lemma in finite-dimensional linear optimization.
One has just to verify that the image cone $f_\#(\mathcal{M}_+(\S))=\{f_\#\mu_0 : \mu_0 \in \mathcal{M}_+(\S)\}$
is closed in the weak-star topology $\sigma(\mathcal{M}(\B),\mathcal{C}(\B))$ of
$\mathcal{M}(\B)$.This, in turn, follows from continuity of $f$
and compactness of $\S$.
\end{proof}

\if{
\begin{corollary}\label{cor:farkas}
Given any measure $\mu_1  \in \mathcal{M}_+(\B)$ admissible for LP (\ref{eq:lpmeasure}),
there is a measure $\mu_0 \in \mathcal{M}_+(\S)$ such that $f_\#\mu_0 = \mu_1$.
\end{corollary}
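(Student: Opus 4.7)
The plan is to deduce the corollary as a direct application of Lemma~\ref{lem:farkas} to the given measure $\mu_1$. Concretely, I would verify the hypothesis of that lemma---namely, the nonexistence of a continuous $v \in \mathcal{C}(\B)$ with $v \circ f \geq 0$ on $\S$ and $\int v\, d\mu_1 < 0$---and then invoke the lemma to obtain the desired $\mu_0 \in \mathcal{M}_+(\S)$ satisfying $f_\#\mu_0 = \mu_1$.

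First I would unpack the admissibility hypothesis. Being admissible for LP~\eqref{eq:lpmeasure} means $\mu_1$ participates in some feasible triple, so in particular $\mu_1 \in \mathcal{M}_+(\B)$ with $\mu_1 \leq \lambda_\B$ and $\mathrm{supp}(\mu_1) \subseteq \F = f(\S)$ (the latter inclusion coming from the equation $\mu_1 = f_\#\mu_0'$ in any witnessing triple, together with compactness of $\S$, which forces $\F$ to be closed). Next I would fix an arbitrary test function $v \in \mathcal{C}(\B)$ with $v(f(\x)) \geq 0$ for every $\x \in \S$. This pointwise inequality on $\S$ amounts to $v \geq 0$ on $\F$, and since $\mu_1$ is a nonnegative Borel measure concentrated on $\F$, one gets
\[
\int_{\B} v(\y)\, d\mu_1(\y) \,=\, \int_{\F} v(\y)\, d\mu_1(\y) \,\geq\, 0.
\]
Hence no continuous $v$ can simultaneously satisfy $v \circ f \geq 0$ on $\S$ and $\int v\, d\mu_1 < 0$, so Lemma~\ref{lem:farkas} furnishes a measure $\mu_0 \in \mathcal{M}_+(\S)$ with $f_\#\mu_0 = \mu_1$, as required.

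The main obstacle I expect lies in pinning down the reading of ``admissible''. Taken strictly as the existence of a feasible triple, a witnessing $\mu_0'$ already exists and the corollary reduces to an unpacking of definitions; the statement gains content only under a relaxed reading (for instance, $\mu_1 \leq \lambda_\B$ together with $\mathrm{supp}(\mu_1) \subseteq \F$), in which case the closedness of the cone $f_\#(\mathcal{M}_+(\S))$ in the weak-star topology $\sigma(\mathcal{M}(\B),\mathcal{C}(\B))$---already the crux of the proof of Lemma~\ref{lem:farkas}---does the actual work. In either reading the derivation above goes through as soon as the sign condition $\int v\, d\mu_1 \geq 0$ is secured, and no further semidefinite or polynomial machinery is needed for this purely measure-theoretic step.
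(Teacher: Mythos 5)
Your proof is correct and follows essentially the same route as the paper: both verify the nonexistence condition of Lemma~\ref{lem:farkas} by observing that $v\circ f\geq 0$ on $\S$ forces $v\geq 0$ on $\F=f(\S)$, so that integration against the nonnegative measure $\mu_1$ supported on $\F$ cannot be negative. The only difference is that the paper first factors $d\mu_1=q_1\,d\lambda_\F$ via Radon--Nikod\'ym before checking the sign, an inessential step you bypass by integrating directly against $\mu_1$; your remark on the two readings of ``admissible'' also reflects how the paper actually invokes the statement.
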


\begin{proof}
Since $\mu_1 \in \mathcal{M}_+(\B)$, $\mu_1+\hat{\mu_1} = \lambda_\B$ and $\mathrm{spt}\:\mu_1 \subseteq \F$,
it holds $0 \leq \mu_1 \ll \lambda_\F$ and by Radon-Nikod\'ym there exists a function $q_1 \in L_1(\lambda_\F)$
such that $d\mu_1(\y) = q_1(\y)d\lambda_\F(\y)$ and $q_1(\y) \geq 0$ for all $\y \in \F$.
The claim follows then from Lemma \ref{lem:farkas}
since it is impossible to find a function $v \in \mathcal{C}(\B)$ such that
$v(\y) \geq 0$ for all $\y \in \F$ while satisfying $\int v(\y)q_1(\y)d\lambda_\F(\y) < 0$. 
\end{proof}
}\fi

\begin{lemma}
\label{th:lpmeasure}
LP~\eqref{eq:lpmeasure} admits an optimal solution $(\mu_0^*, \mu_1^*, \hat{\mu}_1^*)$. Moreover, $\mu_1^*=\lambda_\F$ and the optimal value of LP~\eqref{eq:lpmeasure} is $p^* =  \vol \F$.
\if{
Let $\mu_1^*$ be an optimal solution of LP~\eqref{eq:lpmeasure}.
Then $\mu_1^*=\lambda_\F$ and $p^* =  \vol \F$.
}\fi
\end{lemma}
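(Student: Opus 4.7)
The plan is to identify the natural candidate $(\mu_0^*,\lambda_\F,\lambda_{\B\setminus\F})$, show it is feasible with objective value $\vol\F$, then prove that no feasible triple can exceed this value, and finally deduce that $\mu_1^*$ is unique.

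First I would invoke Lemma~\ref{lem:farkas} to produce some $\mu_0\in\mathcal{M}_+(\S)$ with $f_\#\mu_0=\lambda_\F$. Take any $v\in\mathcal{C}(\B)$ satisfying $v(f(\x))\ge 0$ for all $\x\in\S$; then $v\ge 0$ on $\F=f(\S)$, so $\int v\,d\lambda_\F=\int_\F v(\y)\,d\y\ge 0$. Hence the alternative in Lemma~\ref{lem:farkas} (applied with $\mu_1:=\lambda_\F\in\mathcal{M}_+(\B)$) never occurs, and the lemma yields the desired $\mu_0$. Setting $\hat\mu_1:=\lambda_\B-\lambda_\F=\lambda_{\B\setminus\F}\in\mathcal{M}_+(\B)$ yields a feasible triple with objective value $\int 1\,d\lambda_\F=\vol\F$, so $p^*\ge\vol\F$.

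Next I would establish the matching upper bound by showing that every feasible $\mu_1$ satisfies $\mu_1\le\lambda_\F$ in the sense of nonnegative measures. Since $f$ is continuous on the compact set $\S$, the pushforward $\mu_1=f_\#\mu_0$ is concentrated on $\F=f(\S)$. The constraints $\hat\mu_1\ge 0$ and $\mu_1+\hat\mu_1=\lambda_\B$ give $\mu_1\le\lambda_\B$; hence for every Borel subset $E\subseteq\B$,
\[
\mu_1(E)=\mu_1(E\cap\F)\le\lambda_\B(E\cap\F)=\lambda_\F(E).
\]
Taking $E=\B$ yields $\int\mu_1\le\vol\F$, so $p^*\le\vol\F$ and the triple constructed above attains the supremum. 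Uniqueness of $\mu_1^*$ then follows: any optimal $\mu_1$ satisfies $\mu_1\le\lambda_\F$ while $\int\mu_1=\vol\F=\lambda_\F(\B)$, so $\lambda_\F-\mu_1$ is a nonnegative measure with zero total mass and therefore vanishes identically.

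The only delicate step is the first one, the existence of a $\mu_0\in\mathcal{M}_+(\S)$ whose pushforward equals $\lambda_\F$: this is non-constructive and hinges on the Farkas-type Lemma~\ref{lem:farkas}, which itself rested on weak-star closedness of the cone $f_\#(\mathcal{M}_+(\S))$, guaranteed by continuity of $f$ and compactness of $\S$. Once this existence is granted, the remaining arguments are straightforward domination and mass-balance considerations and require no extra hypothesis on $\S$ or $\B$.
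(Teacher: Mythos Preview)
Your proof is correct and follows essentially the same route as the paper: both use Lemma~\ref{lem:farkas} to produce $\mu_0^*$ with $f_\#\mu_0^*=\lambda_\F$, then verify feasibility of $(\mu_0^*,\lambda_\F,\lambda_{\B\setminus\F})$. The paper's invocation of Radon--Nikod\'ym is redundant here (since $\mu_1^*=\lambda_\F$ gives density $q_1\equiv 1$), and for optimality and uniqueness it simply cites \cite[Theorem~3.1]{HLS08vol}; your argument via $\mu_1\le\lambda_\F$ and mass balance is exactly what that citation unpacks, so your write-up is in fact more self-contained.
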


\begin{proof}
\label{pr:lpmeasure}
First, we prove that for 
$\mu_1^*=\lambda_\F \in \mathcal{M}_+(\B)$, there is a measure $\mu_0^* \in \mathcal{M}_+(\S)$ such that $f_\#\mu_0^* = \mu_1^*$.
Indeed, by Radon-Nikod\'ym there exists a function $q_1 \in L_1(\lambda_\F)$
such that $d\mu_1^*(\y) = q_1(\y)d\lambda_\F(\y)$ and $q_1(\y) \geq 0$ for all $\y \in \F$. The claim follows then from Lemma \ref{lem:farkas}
since it is impossible to find a function $v \in \mathcal{C}(\B)$ such that
$v(\y) \geq 0$ for all $\y \in \F$ while satisfying $\int v(\y)q_1(\y)d\lambda_\F(\y) < 0$.
Define $\hat{\mu}_1^* = \lambda_\B - \mu_1^*$, then $(\mu_0^*, \mu_1^*, \hat{\mu}_1^*)$ is admissible for LP~\eqref{eq:lpmeasure}. Exactly as in the proof of \cite[Theorem 3.1]{HLS08vol}, one finally shows that this triplet is optimum as well as uniqueness of $\mu_1^*$, yielding the optimal value $p^* =  \vol \F$.
\if{
From Corollary \ref{cor:farkas}, the choice $\mu^*_1=\lambda_\F  \in \mathcal{M}_+(\B)$ is admissible for LP (\ref{eq:lpmeasure})
since there exists $\mu_0^* \in \mathcal{M}_+(\S)$ such that $f_\#\mu_0^* = \mu_1^*$
and we can enforce $\hat{\mu}_1^* = \lambda_\B - \mu_1^*$.
}\fi
\if{
Optimality of $\mu^*_1=\lambda_\F$ then follows exactly as in the proof of \cite[Theorem 3.1]{HLS08vol}.
}\fi
\end{proof}
Next, we express problem~\eqref{eq:lpmeasure} as an infinite-dimensional conic problem
on appropriate vector spaces. By construction, a feasible solution of problem~\eqref{eq:lpmeasure} satisfies:
\begin{align}
\int_\B v(\y) \, \mu_1(d \y) - \int_\S v(f(\x)) \, \mu_0(d \x) & =  0  , \label{eq:testv} \\
\int_\B w(\y) \, \mu_1(d \y) + \int_\B w(\y) \, \hat{\mu}_1(d \y) & =  \int_\B w(\y) \, \lambda (d \y)  , \label{eq:testw}
\end{align}
for all continuous test functions $v, w \in \mathcal{C}(\B)$.

Then, we cast problem~\eqref{eq:lpmeasure} as a particular instance of a primal  LP in the canonical form given in~\cite[7.1.1]{alexander2002course}:
\begin{equation}
\label{eq:lpprimal}
\begin{aligned}
p^* = \sup\limits_{x} \quad & \langle x, c \rangle_1 \\
\text{s.t.} \quad & \mathcal{A} \, x = b  , \\
\quad & x \in E_1^+ , \\
\end{aligned}
\end{equation}
with
\begin{itemize}
\item the vector space $E_1 := \mathcal{M}(\S) \times \mathcal{M}(\B)^2$;
\item the vector space $F_1 := \mathcal{C}(\S) \times \mathcal{C}(\B)^2$;
\item the duality $\langle \cdot , \cdot \rangle_1 : E_1 \times F_1 \to \R$, given by the integration of continuous functions against Borel measures, since $E_1 = F_1'$;
\item the decision variable $x := (\mu_0, \mu_1, \hat{\mu}_1) \in E_1$ and the reward $c := (0, 1, 0) \in F_1$;
\item $E_2 := \mathcal{M}(\B)^2$, $F_2 := \mathcal{C}(\B)^2$ and the right hand side vector $b := (0, \lambda) \in E_2=F_2'$;
\item the linear operator $\mathcal{A} : E_1 \to E_2$ given by
\[
\mathcal{A} \, (\mu_0, \mu_1, \hat{\mu}_1) := \left[\begin{array}{cc}
-f_\#\mu_0 + \mu_1 \\  \mu_1+\hat{\mu}_1 
\end{array}\right].
\]
\end{itemize}
Notice that all spaces $E_1$, $E_2$ (resp. $F_1$, $F_2$) are equipped with the weak topologies $\sigma(E_1, F_1)$, $\sigma(E_2, F_2)$ (resp. $\sigma(F_1, E_1)$, $\sigma(F_2, E_2)$). Importantly, $\sigma(E_1, F_1)$ is the weak-star topology (since $E_1 = F_1'$). Observe that $\mathcal{A}$ is continuous with respect to the weak topology, as $\mathcal{A}'(F_2) \subset F_1$.

With these notations,  the dual LP in the canonical form given in~\cite[7.1.2]{alexander2002course} reads:
\begin{equation}
\label{eq:lpdual}
\begin{aligned}
d^* = \inf\limits_{y} \quad & \langle b, y \rangle_2 \\
\text{s.t.} \quad & \mathcal{A}' \, y  -c\:\in \mathcal{C}_+(\B)^2\\
\end{aligned}
\end{equation}
with
\begin{itemize}
\item the dual variable $y := (v, w) \in E_2$;
\item the (pre)-dual cone $\mathcal{C}_+(\B)^2$, whose dual is $E_1^+$;
\item the duality pairing $\langle \cdot , \cdot \rangle_2 : E_2 \times F_2 \to \R$, with $E_2 = F_2'$;
\item the adjoint linear operator $\mathcal{A}' : F_2 \to F_1$ given by
\[
\mathcal{A}' \, (v,w) := \left[\begin{array}{c}
-v \circ f \\ v + w \\ w
\end{array}\right].
\]
\end{itemize}
Using our original notations, the dual LP of problem~\eqref{eq:lpmeasure} then reads:
\begin{equation}
\label{eq:lpcont}
\begin{aligned}
d^* := \inf\limits_{v, w} \quad & \int w(\y) \, \lambda_\B (d  \y) \\			
\text{s.t.} \quad & v(f(\x)) \geq 0 , \quad \forall \x \in \S  , \\
\quad & w(\y) \geq 1 + v(\y) , \quad \forall \y \in \B  , \\
\quad & w(\y) \geq 0 , \quad \forall \y \in \B  , \\
\quad & v, w \in \mathcal{C}(\B)  . \\
\end{aligned}
\end{equation}

\begin{theorem}
\label{th:zerogap}
There is no duality gap between problem~\eqref{eq:lpmeasure} and problem~\eqref{eq:lpcont}, i.e. $p^*=d^*$.
\end{theorem}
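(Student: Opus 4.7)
The plan is to establish $p^*=d^*$ by proving weak duality $p^* \le d^*$ directly from the definitions, and then to show the reverse inequality by exhibiting near-optimal dual feasible pairs built from continuous approximations to the indicator function $\mathbf{1}_{\F}$.

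For weak duality, I would take any primal feasible triple $(\mu_0,\mu_1,\hat\mu_1)$ and any dual feasible pair $(v,w)$ and simply integrate the dual inequalities against the measures. From $w \ge 1 + v$ on $\B$ and $w \ge 0$ on $\B$ together with $\mu_1+\hat\mu_1=\lambda_\B$,
\begin{equation*}
\int w\,d\lambda_\B \;=\; \int w\,d\mu_1 + \int w\,d\hat\mu_1 \;\ge\; \int(1+v)\,d\mu_1 \;=\; \int d\mu_1 + \int v\,d\mu_1.
\end{equation*}
Using $\mu_1 = f_\#\mu_0$, the change-of-variables formula gives $\int v\,d\mu_1 = \int v\circ f\,d\mu_0 \ge 0$ by the first dual constraint and nonnegativity of $\mu_0$. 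Hence $\int w\,d\lambda_\B \ge \int d\mu_1$, i.e. $d^* \ge p^*$.

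For the reverse inequality $d^* \le p^*=\vol\F$, I would fix $\epsilon>0$ and construct a dual feasible pair with value at most $\vol\F + \epsilon$. Since $\F=f(\S)$ is compact (continuous image of a compact set) and Lebesgue measure on $\R^m$ is outer regular, there exists an open set $U\subset\R^m$ with $\F\subset U$ and $\vol(U\cap\B) \le \vol\F+\epsilon$. By Urysohn's lemma applied to the compact set $\F$ and the closed set $\B\setminus U$, one can pick $w\in\mathcal{C}(\B)$ with $0 \le w \le 1$, $w=1$ on $\F$, and $w=0$ on $\B\setminus U$. Set $v := w-1 \in \mathcal{C}(\B)$. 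Then for every $\x\in\S$ one has $f(\x)\in\F$, so $v(f(\x)) = w(f(\x))-1 = 0 \ge 0$; the constraint $w(\y)\ge 1+v(\y)$ holds with equality on $\B$; and $w\ge 0$ holds by construction. Thus $(v,w)$ is feasible for~\eqref{eq:lpcont}, and
\begin{equation*}
d^* \;\le\; \int w(\y)\,\lambda_\B(d\y) \;\le\; \vol(U\cap\B) \;\le\; \vol\F+\epsilon.
\end{equation*}
Letting $\epsilon\downarrow 0$ yields $d^* \le \vol\F = p^*$ by Lemma~\ref{th:lpmeasure}, and combining with weak duality gives $p^*=d^*$.

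I expect no serious obstacle: the only delicate points are (i) checking that the compact set $\F$ together with outer regularity of $\lambda_\B$ permits the Urysohn-type construction of $w$, and (ii) verifying the continuity of $v\circ f$ so the first dual constraint is meaningful, both of which follow from compactness of $\S$ and continuity of $f$. An alternative, more abstract route would invoke an Anderson--Nash type zero-duality-gap theorem based on weak-star closedness of the image cone $\mathcal{A}(E_1^+)\subset E_2$, but the explicit construction above is shorter and more transparent, and mirrors the proof of~\cite[Theorem 3.1]{HLS08vol}.
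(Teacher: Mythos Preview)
Your proof is correct and takes a genuinely different route from the paper's. The paper invokes the abstract zero-duality-gap criterion of \cite[Theorem~7.2]{alexander2002course}, which reduces the question to showing that the cone $\{(\mathcal{A}x,\langle x,c\rangle_1):x\in E_1^+\}$ is weak-star closed in $E_2\times\R$; this is established via a sequential compactness argument exploiting boundedness of $\|\mu_1^{(k)}\|_{\mathrm{TV}}$ and the closedness of $f_\#(\mathcal{M}_+(\S))$. You instead bypass the abstract machinery entirely: weak duality is checked by hand, and the reverse inequality is obtained by constructing explicit near-optimal dual pairs $(v,w)=(w-1,w)$ with $w$ a Urysohn function squeezed between $\mathbf{1}_\F$ and $\mathbf{1}_{U\cap\B}$ for an outer-regular open neighborhood $U$ of $\F$. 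Your argument is shorter, more self-contained, and---as you note---closer in spirit to \cite[Theorem~3.1]{HLS08vol}; it also makes transparent that the dual infimum is in general not attained (your $w$ is continuous but $\mathbf{1}_\F$ need not be). The paper's approach, on the other hand, fits the problem into a general conic-duality framework that would remain applicable under perturbations of the data or the objective, and isolates the structural fact (closedness of the image cone) that drives strong duality. Both are valid; the alternative abstract route you sketch in your final paragraph is precisely what the paper does.
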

\begin{proof}
\label{pr:zerogap}
This theorem follows from the ``zero duality gap'' result from~\cite[Theorem 7.2]{alexander2002course}, if one can prove that the cone
\begin{equation}
\label{eq:coneA}
\mathcal{A} \, (E_1^+) := \{ (\mathcal{A} \, x, \langle x, c \rangle_1) : x \in E_1^+ \}
\end{equation}
is closed in $E_2 \times \R$. 
To do so, let us consider a sequence  $(x^{(k)}) = (\mu_0^{(k)},\mu_1^{(k)},\hat{\mu}_1^{(k)})) \subset E_1^+$ such that $\mathcal{A} \, x^{(k)} \to s = (s_1, s_2)$ and $\langle x^{(k)}, c \rangle_1 \to t$. Let us prove that $(s, t) = (\mathcal{A} \, x^*, \langle x^*, c \rangle_1)$ for some $x^* = (\mu_0^*,\mu_1^*,\hat{\mu}_1^*) \in E_1^+$. As $c = (0, 1, 0)$, one has $\|\mu_1^{(k)}\|_{\text{TV}} = \int_\B \mu_1^{(k)} (d \y) \to t (\geq 0)$, thus  $\sup_k \|\mu_1^{(k)}\|_{\text{TV}}  < \infty$. Therefore there is a subsequence
(denoted by the same indices) $(\mu_1^{(k)})$ which converges to $\mu_1^* \in \mathcal{M}_+(\B)$ for
the weak-star topology. In particular $\Vert\mu_1^*\Vert_{TV}=t$. Hence from $\hat{\mu}_1^{(k)}+\mu_1^{(k)} \to s_2$ one deduces that $\hat{\mu}_1^{(k)} \to s_2-\mu_1^*$ for the weak-star topology. But then we also have $-f_\#\mu_0^{(k)} \to s_1-\mu_1^*$ in the weak-star topology
of $\mathcal{M}(\B)$. Therefore $-s_1+\mu_1^*$ is a positive measure. So let $\mu_0^*$ be such that $f_\#\mu_0^* = -s_1+\mu_1^*$ guaranteed to exist 
since we have seen that $f_\#(\mathcal{M}_+(S))$ is weak-star closed. Then we have $\mathcal{A}(x^*) = (s_1,s_2)$ and $\langle x^*,c\rangle =t$, the desired result.
\end{proof}
\subsection{Practical computation using semidefinite programming}
\label{sec:momentsos}
For each $r \geq r^{(2)}_{\min} := \max \{\lceil r^\S_1 / d \rceil, \ldots, \lceil r^\S_{n^\S} / d \rceil, r^\B_1, \ldots, r^\B_{n^\B}\}$,
let $\z_0 = (z_{0 \beta})_{\beta \in \N_{2 r}^m}$ be the finite sequence of moments up to degree $2 r$
of measure $\mu_0$. Similarly, let $\z_1$ and $\hat{\z}_1$ stand for the sequences of moments up to degree $2 r$, respectively associated with $\mu_1$ and $\hat{\mu}_1$.
Problem~\eqref{eq:lpmeasure} can be relaxed with the following semidefinite program:
\begin{equation}
\label{eq:lprelax}
\begin{aligned}
p^*_r := \sup\limits_{\z_0, \z_1, \hat{\z}_1} \quad & z_{1 0} \\			
\text{s.t.} \quad & z_{1 \beta} + \hat{z}_{1 \beta} = z^{\B}_\beta, L_{\z_0}(f(\x)^\beta) = z_{1 \beta},
\quad \forall \beta \in N_{2 r}^m, \\
\quad & \M_{r d -r_j^\S} (g_j^\S \z_0) \succeq 0, \quad j = 0,\dots, n^\S, \\
\quad & \M_{r - r_j^\B}(g_j^\B \z_1) \succeq 0 , \M_{r - r_j^\B} (g_j^\B  \hat{\z}_1) \succeq 0, \quad j = 0,\dots, n^\B.
\end{aligned}
\end{equation}
Consider also the following semidefinite program, which is a strengthening of problem~\eqref{eq:lpcont}
and also the dual of problem~\eqref{eq:lpstrength}:
\begin{equation}
\label{eq:lpstrength}
\begin{aligned}
d^*_r := \inf\limits_{v, w} \quad & \sum_{\beta \in \N_{2 r}^m} w_\beta z^\B_\beta \\			
\text{s.t.} \quad & v \circ f \in \Q_{r d}(\S), \\
\quad & w -1-v \in \Q_r(\B), \\
\quad & w \in \Q_r(\B),\\
\quad & v, w \in \R_{2 r}[\y]. \\
\end{aligned}
\end{equation}
\begin{theorem}
\label{th:lpstrength}
Let $r \geq r^{(2)}_{\min}$ and suppose that both $\F$ and $\B \backslash \F$ have nonempty interior and that Assumption~\ref{hyp:archimedean} holds. Then:
\begin{enumerate}
\item $p^*_r=d^*_r$, i.e. there is no duality gap between the semidefinite program (\ref{eq:lprelax}) and its dual (\ref{eq:lpstrength}).
\item The semidefinite program~\eqref{eq:lpstrength} has an optimal solution $(v_r, w_r) \in \R_{2 r}[\y] \times \R_{2 r}[\y]$, and the sequence $(w_r)$ converges to $\mathbf{1}_\F$ in $L_1$ norm on $\B$:
\begin{equation}
\label{eq:wdcvg}
\lim_{r \to \infty} \int |w_r(\y)-\mathbf{1}_{\F} (\y)| \, \lambda_\B (d \y)  = 0.
\end{equation}
\item Defining the set
\[
\F^2_r := \{ \y \in \B : w_r (\y) \geq 1 \}
\]
its holds that
\[
\F^2_r \supset \F
\]
and
\[
\lim_{r \to \infty} \vol (\F^2_r \backslash \F) = 0.
\]
\end{enumerate}
\end{theorem}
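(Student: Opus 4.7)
My proof plan mirrors the structure of Theorem~\ref{th:sdpexists} and proceeds in three steps matching the three claims.

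For part 1 (zero duality gap), I would first exhibit a trivial feasible point of the primal SDP~\eqref{eq:lprelax}, namely the truncated moments of $(\mu_0, \mu_1, \hat{\mu}_1) = (0, 0, \lambda_\B)$. Next, I would bound the feasible set: the equality constraints force $z_{0,0} = L_{\z_0}(\f(\x)^0) = z_{1,0}$ and $z_{1,0} + \hat{z}_{1,0} = z^\B_0 = \vol \B$, so the zero-order moments are all bounded by $\vol \B$. Under Assumption~\ref{hyp:archimedean}, the Archimedean localizing constraints on $\S$ and on $\B$ together with \cite[Lemma 4.3]{LasserreNetzer07SOS} propagate this to a uniform bound on all moments of $\z_0, \z_1, \hat{\z}_1$. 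Hence the primal feasible set is nonempty, closed and bounded, so compact, the primal attains its maximum, and strong duality $p^*_r = d^*_r$ follows from~\cite{Trnovska:2005:SDP}, exactly as in the proof of Theorem~\ref{th:sdpexists}.

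For part 2, I would first invoke the nonempty interior assumptions on $\F$ and $\B \backslash \F$ to construct a strictly feasible primal point: take $\mu_1, \hat{\mu}_1$ with strictly positive continuous densities supported on interior points of $\F$ and $\B \backslash \F$, and (via Lemma~\ref{lem:farkas}) a preimage $\mu_0 \in \mathcal{M}_+(\S)$ of $\mu_1$ under $f_\#$. This yields Slater's condition for the primal and hence dual attainment by an optimum $(v_r, w_r) \in \R_{2r}[\y]^2$. The central geometric observation is that $w_r \geq \mathbf{1}_\F$ pointwise on $\B$: for any $\y \in \F$ pick $\x \in \S$ with $\f(\x) = \y$; then $v_r(\y) = v_r(\f(\x)) \geq 0$ from $v \circ \f \in \Q_{rd}(\S)$, and $w_r(\y) \geq 1 + v_r(\y) \geq 1$ from $w - 1 - v \in \Q_r(\B)$; on $\B \backslash \F$ one only has $w_r \geq 0$ from $w \in \Q_r(\B)$. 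Consequently
\[
\int_\B |w_r(\y) - \mathbf{1}_\F(\y)| \, \lambda_\B(d\y) = d^*_r - \vol \F,
\]
so~\eqref{eq:wdcvg} reduces to $d^*_r \downarrow p^* = \vol \F$, where the last equality is Lemma~\ref{th:lpmeasure} combined with Theorem~\ref{th:zerogap}. To prove this limit, I would fix $\epsilon > 0$ and a small parameter $\delta = \epsilon/(4 \vol \B)$, approximate $\mathbf{1}_\F$ from above by a continuous $w_\epsilon$ with $w_\epsilon \geq 1 + \delta$ on $\F$, $w_\epsilon \geq \delta$ on $\B$, and $\int w_\epsilon \, \lambda_\B(d\y) \leq \vol \F + \epsilon/2$; then set $v_\epsilon := w_\epsilon - 1 - \delta$ so that $v_\epsilon \circ \f > 0$ on $\S$, $w_\epsilon - 1 - v_\epsilon = \delta > 0$ on $\B$, and $w_\epsilon > 0$ on $\B$. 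Applying Putinar's Positivstellensatz~\cite[Section 2.5]{lasserre2009moments} to these three strictly positive functions produces SOS decompositions in $\Q_{rd}(\S)$ and $\Q_r(\B)$ at some order $r$ sufficiently large, hence a feasible point of~\eqref{eq:lpstrength} with objective at most $\vol \F + \epsilon$.

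Part 3 is then immediate: $\F \subset \F^2_r$ follows from $w_r \geq 1$ on $\F$, and on $\F^2_r \backslash \F$ we have $|w_r - \mathbf{1}_\F| = w_r \geq 1$, so
\[
\vol(\F^2_r \backslash \F) \leq \int_\B |w_r(\y) - \mathbf{1}_\F(\y)| \, \lambda_\B(d\y) \longrightarrow 0
\]
by~\eqref{eq:wdcvg}. The main obstacle I anticipate is the continuous-plus-Putinar approximation in part 2: one must delicately approximate $\mathbf{1}_\F$ from above while simultaneously preserving the three coupled positivity constraints (on $\S$ via $v \circ \f$ and on $\B$ via $w$ and $w-1-v$), and the nonempty interior of $\B \backslash \F$ is essential so that $w_\epsilon$ can drop well below $1$ on a set of positive Lebesgue measure outside $\F$ while remaining nonnegative.
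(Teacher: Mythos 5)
Your overall strategy coincides with the paper's. Part 1 is the same compactness-plus-Trnovsk\'a argument~\cite{Trnovska:2005:SDP}: the paper seeds the feasible set with the truncated moments of $(\mu_0,\lambda_\F,\lambda_\B-\lambda_\F)$ rather than your $(0,0,\lambda_\B)$, but either choice works, since the point is that the Archimedean constraints of Assumption~\ref{hyp:archimedean} plus~\cite[Lemma 4.3]{LasserreNetzer07SOS} bound all moments by the zeroth ones. Part 2 also follows the paper's route (strict feasibility of~\eqref{eq:lprelax} from the nonempty-interior hypotheses, dual attainment, then approximation of $\mathbf{1}_\F$ from above certified by Putinar); one detail you gloss over and the paper spells out is why $\M_{rd-r_j^\S}(g_j^\S\z_0)\succ 0$: a vector in the kernel would give a polynomial vanishing on the open set $f^{-1}(\A)$, $\A$ an open subset of $\F$, which has positive $\mu_0$-measure. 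Your part 3, however, is genuinely different and cleaner: since $w_r\geq\mathbf{1}_\F$ on $\B$ and $w_r\geq 1$ on $\F^2_r$, the Chebyshev-type bound $\vol(\F^2_r\setminus\F)\leq\int_\B|w_r-\mathbf{1}_\F|\,\lambda_\B(d\y)$ finishes immediately, whereas the paper re-runs the longer convergence-in-measure argument of Theorem~\ref{th:volH}. Your shortcut is valid precisely because the target function is an indicator and the threshold defining $\F^2_r$ is $1$.

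The one genuine gap is in the feasibility certificate of part 2. Putinar's Positivstellensatz applies to \emph{polynomials} strictly positive on an Archimedean basic semi-algebraic set, and feasible points of~\eqref{eq:lpstrength} must lie in $\R_{2r}[\y]$; your $w_\epsilon$ is only continuous, so ``applying Putinar to these three strictly positive functions'' is not licit as stated and cannot by itself yield a feasible point. You need an intermediate Stone--Weierstrass step: replace $w_\epsilon$ by a polynomial within $\delta/2$ uniformly on $\B$ and shift it up by $\delta/2$, preserving all three positivity margins; this is exactly what the paper does (a sequence $h_k\downarrow\mathbf{1}_\F$ of continuous functions via upper semicontinuity of $\mathbf{1}_\F$, then $w_k:=w_k'+1/k$ with $\sup_{\y\in\B}|w_k'(\y)-h_k(\y)|\leq 1/k$). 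Relatedly, with your margins $w_\epsilon\geq 1+\delta$ on $\F$ and $v_\epsilon:=w_\epsilon-1-\delta$ you only get $v_\epsilon\circ f\geq 0$ on $\S$, not $>0$; you need one more layer of slack (e.g.\ $w_\epsilon\geq 1+2\delta$ on $\F$) for Putinar to apply to $v_\epsilon\circ f$. Both repairs are routine, but without them the chain ``strictly positive $\Rightarrow$ in the truncated quadratic module for $r$ large enough'' does not go through.
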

\begin{proof}
\label{pr:lpstrength}
~

\begin{enumerate}
\item 
Let $\mu_1=\lambda_\F$, let $\mu_0$ be such that $f_\#\mu_0 = \mu_1$ as in Lemma \ref{lem:farkas},
and let $\hat{\mu}_1 = \lambda_\B - \mu_1$ so that $(\mu_0,\mu_1,\hat{\mu}_1)$ is feasible for LP (\ref{eq:lpmeasure}).
Given $r \geq r^{(2)}_{\min}$, let $\z_0$, $\z_1$ and $\hat{\z}_1$ be the sequences of moments up to degree $2 r$
of $\mu_0$, $\mu_1$ and $\hat{\mu}_1$, respectively. Clearly, $(\z_0, \z_1, \hat{\z}_1)$ is feasible for program~\eqref{eq:lprelax}.
Then, as in the proof of the first item of Theorem~\ref{th:sdpexists}, the optimal solution set of the program~\eqref{eq:lprelax} is nonempty and bounded, which by~\cite{Trnovska:2005:SDP} implies that there is no duality gap between the semidefinite program~\eqref{eq:lpstrength} and its dual~\eqref{eq:lprelax}. 

\item
Now, one shows that $(\z_0, \z_1, \hat{\z}_1)$ is strictly feasible for program~\eqref{eq:lprelax}.
Using the fact that 
\begin{enumerate}
\item $\F$ (resp. $\B \backslash \F$) has nonempty interior,
\item $\z_1$  (resp. $\hat{\z_1}$) is the moment sequence of $\mu_1$ (resp. $\hat{\mu}_1$),
\end{enumerate}
one has  $\M_r(g_j^\B \z_1) \succ 0$ (resp. $\M_r(g_j^\B \hat{\z}_1) \succ 0$), for each $j = 0,\dots, n^\B$. Moreover, $\M_r(g_j^\S \z_0) \succ 0$, for all  $j = 0,\dots, n^\S$. Otherwise, assume that there exists a nontrivial vector $\q$ such that $\M_r(g_j^\S \z_0) \, \q  = 0$ for some $j$. As $\F$ has nonempty interior, it contains an open set $\A \subset \R^m$. By continuity of $f$, the set $f^{-1}(\A) :=\{\x \in \S : f(\x) \in \A\}$ is an open set of $\S$ and $\mu_0 (f^{-1}(\A)) = \mu_1 (\A) > 0$. Then,
\[
0 = \langle \q, \M_r(g_j^\S \z_0) \, \q \rangle = \int_{\S} q(\x)^2 \, g_j^\S(\x) \, d \mu_0 (\x)  \geq \int_{f^{-1}(\A)} q(\x)^2 \, g_j^\S(\x) \, d \mu_0 (\x) ,
\]
which yields $q(\x)^2 \, g_j^\S(\x) = 0$ on the open set $f^{-1}(\A)$, leading to a contradiction.

Therefore, as for the proof of the second item of Theorem~\ref{th:sdpexists}, we conclude that the semidefinite program~\eqref{eq:lpstrength} has an optimal solution $(v_r, w_r) \in \R_{2 r}[\y] \times \R_{2 r}[\y]$.

Next, one proves that there exists a sequence of polynomials $(w_k)_{k \in \N}
\subset \R[\y] $ such that $w_k(\y) \geq \mathbf{1}_{\F} (\y)$, for all $\y \in \B$ and such that
\begin{align}
\label{eq:overwdcvg}
\lim_{k \to \infty} \int  |w_k(\y) - \mathbf{1}_{\F} (\y) | \, \lambda_\B (d \y)  = 0.
\end{align}
The set $\F$ being closed, the indicator function $\mathbf{1}_{\F}$ is upper semi-continuous and bounded, so there exists a non-increasing sequence of bounded continuous functions $h_k : \B \to \R$ such that $h_k (\y) \downarrow \mathbf{1}_{\F} (\y)$, for all $\y \in \B$, as $k \to \infty$. Using the Monotone Convergence Theorem, $h_k \to \mathbf{1}_{\F}$ for the $L_1$ norm. By the Stone-Weierstrass Theorem, there exists a sequence of polynomials $(w_k')_{k \in \N} \subset \R[\y]$, such that $\sup_{\y \in \B} |w_k'(\y) - h_k(\y)| \leq 1/k$. The polynomial $w_k := w_k' + 1/k$ satisfies $w_k > h_k \geq \mathbf{1}_{\F}$ and~\eqref{eq:overwdcvg} holds.

Let us define $\tilde{w}_k := w_k + \epsilon / (2 \vol \B)$, $\tilde{v}_k := w_k -1$.
Next, for $r \in \N$ large enough, one proves that $(\tilde{v}_k, \tilde{w}_k)$ is a feasible solution of~\eqref{eq:lpstrength}. 
Using the fact that $\tilde{w}_k > w_k > \mathbf{1}_{\F}$, one has $\tilde{w}_k \in \Q_r(\B)$, as a consequence of Putinar's Positivstellensatz. 
For each $\x \in \S$,  $\tilde{v}_k (f(\x)) = w_k(\y) - 1 > \mathbf{1}_{\F} (f(\x)) - 1 + \epsilon / (2 \vol \B) > 0$, so $\tilde{v}_k \circ f$ lies in $\Q_{r d}(\S)$.
Similarly, $\tilde{w}_k - \tilde{v}_k - 1 \in \Q_r(\B)$. Then, one concludes using the same arguments as for~\eqref{eq:qdcvg} in the proof of the second item of Theorem~\ref{th:sdpexists}.
\item Let $\y \in \F$. There exists $\x \in \S$ such that $\y = f(\x)$. Let $(v_r, w_r) \in \R_{2 r}[\y] \times \R_{2 r}[\y]$ be an optimal solution of~\eqref{eq:lpstrength}. By feasibility, $w_r(\y) -1 \geq v_r(\y) = v_r(f(\x)) \geq 0$. Thus, $\F^2_r \supset \F$. Finally, the proof
of the convergence in volume is analogous to the proof of~\eqref{eq:volconv} in Theorem~\ref{th:volH}.
\end{enumerate}
\end{proof}
\section{Computational considerations}
\label{sec:compl}
\subsection{Complexity analysis and lifting strategy}
\subsubsection{Method 1}
First, consider the semidefinite program~\eqref{eq:dualexists} of Method 1. For $r \geq r^{(1)}_{\min}$, the number of variables $n^{(1)}$ (resp. size of semidefinite matrices $m^{(1)}$) of problem~\eqref{eq:dualexists} satisfies:
\[ n^{(1)} \leq  \binom{n + m + 2 r }{2 r}   . \]
Problem~\eqref{eq:dualexists} involves $(n^\B+ n^\S + 1)$ semidefinite constraints of size $m^{(1)}$ bounded as follows:
\[ m^{(1)} \leq  \binom{n + m + r }{r}   . \]
\subsubsection{Method 2}
\label{sec:lift2}
Now, consider the  semidefinite program~\eqref{eq:lprelax}. 
For $r \geq r^{(2)}_{\min}$, the number of  variables $n^{(2)}$ of Problem~\eqref{eq:lprelax} satisfies:
\[ n^{(2)} \leq  \binom{n + 2 r  d}{2 r d} +  2 \binom{m + 2 r}{2 r} . \]
Problem~\eqref{eq:lprelax} also involves $(n^\S+1)$ semidefinite constraints of size at most $\binom{n + r d}{r d}$ and $2 (n^\B+1)$ semidefinite constraints of size at most $\binom{m + r}{r}$. 

Due to the dependence on the degree $d$ of the polynomial application, one observes that the number of variables (resp. constraints) can quickly become large if $d$ is not small.
An alternative formulation to limit the blowup of these relaxations is obtained by considering $y_1, \dots, y_m$ as ``lifting'' variables, respectively associated with $f_1, \dots, f_m$, together with the following $2m$ additional constraints: 
\[
g^\S_{n_\S+j}(\x, \y) := y_j - f_j(\x)  , \quad g^\S_{n_\S+2 j}(\x, \y) := f_j(\x) - y_j,  \quad j = 1,\dots, m.
\]

By considering the basic compact semi-algebraic set $\Slift \subset \R^{n+m}$ given by
\begin{equation}
\label{eq:defSlift}
\Slift := \{(\x, \y) \in \K : g^\S_{n^\S+1}(\x, \y) \geq 0, \dots, g^\S_{n^\S+2m}(\x, \y) \geq 0 \} , 
\end{equation}
problem~\eqref{eq:lpstrength} becomes:
\begin{equation}
\begin{aligned}
\inf\limits_{v, w} \quad & \sum_{\beta \in \N_{2 r}^m} w_\beta z^\B_\beta \\		
\text{s.t.} \quad & v \in \Q_{r}(\Slift)  , \\
\quad & w -1 - v\in \Q_r(\B) , \\
\quad & w \in \Q_r(\B),\\
\quad & v, w \in \R_{2 r}[\y]  , \\
\end{aligned}
\end{equation}
which is actually equivalent to the following problem:
\begin{equation}
\label{eq:lpstrengthlift}
\begin{aligned}
\inf\limits_{w} \quad & \sum_{\beta \in \N_{2 r}^m} w_\beta  z^\B_\beta \\		
\text{s.t.} \quad & w - 1 \in \Q_{r}(\Slift)  , \\
\quad & w \in \Q_r(\B)  , \\
\quad & w \in \R_{2 r}[\y]  . \\
\end{aligned}
\end{equation}

The minimal relaxation order of problem~\eqref{eq:lpstrengthlift} is $\dlift^{(2)}
 := \max \{\lceil\frac{d}{2}\rceil, r^\S_1, \ldots, r^\S_{n^\S}, r^\B_1, \ldots, r^\B_{n^\B}\}$
and the number of variables  $\nlift^{(2)}$ is bounded as follows:
\[ \nlift ^{(2)} \leq  \binom{n + m + 2 r}{2 r} + \binom{m + 2 r}{2 r}  . \]
Problem~\eqref{eq:lpstrengthlift} involves $(n^\S+ 2 m + 1)$ semidefinite constraints of size at most $\binom{n + m + r}{r}$ and $(n^\B+1)$ semidefinite constraints of size at most $\binom{m + r}{r}$. 
When $m$ is small and $d$ is large, this seems to be a suitable choice to reduce the computational cost of the semidefinite program~\eqref{eq:lprelax}. Experimental results described further (see Table~\ref{table:image} in Section~\ref{sec:polbenchs} and Table~\ref{table:saimage} in Section~\ref{sec:saimage}) agree with this observation.
\subsection{Exploiting sparsity}
As explained above, both Method 1 and Method 2 are computationally demanding in general. However, if the polynomials $\x \mapsto f_j(\x)$, $(j=1, \dots, m)$ have some structured sparsity, then one can still exploit sparsity in a way similar to the one described in~\cite{Waki06sumsof,Las06Sparse} to handle problems in higher dimensions. In particular, let $\{1, \dots, n\}$ be the union $\bigcup_{j=1}^{m} I_j$ of subsets $I_j \subseteq \{1, \dots, n\}$
and assume that for each $j=1, \dots, m$, the polynomial $f_j$ involves only variables $\{x_i \, | \, i \in I_j \}$.
One also suppose that the collection $\{I_1, \dots, I_m \}$ satisfies the so-called {\em running intersection property}:
\begin{definition}
The collection $\{I_1, \dots, I_m \}$ of subsets $I_j \subseteq \{1, \dots, n\}$ satisfies the running intersection property if the following holds for each $j=1 , \dots, m - 1$:
\[    I_{j + 1} \cap \bigcup_{k=1}^j I_k \subseteq I_l \quad \text{for some } l \leq j  . \]
\end{definition}
The following assumption allows to apply the sparse representation result of~\cite[Corollary 3.9]{Las06Sparse} while using either Method 1 or Method 2.
\begin{assumption}
\label{hyp:sparse}
The index set $\{1, \dots, n\}$ is partitioned into $m$ disjoint sets $I_j$, $j=1,\dots,m$ so that:
\begin{enumerate}
\item The collection $\{I_1, \dots, I_m \}$ satisfies the running intersection property.
\item For each $j=1,\dots,n^\S$, there exists some $k_j$ such that the polynomial $g_j^\S$ in (\ref{eq:defS})
 involves only variables $\{x_i \, | \, i \in I_{k_j} \}$.
\item In the definition (\ref{eq:defS}) of $\S$, we replace the inequality constraint $N^\S - \| \x \|_2^2 \geq 0$ by the $m$ quadratic constraints: 
\[N_j - \sum_{i \in I_j} x_i^2 \geq 0 , \quad j=1,\dots,m   . \]
\end{enumerate}
\end{assumption}
For each $j=1, \dots, m$, index the variable $y_j$ by $n+j$ and define $I_j^{(1)} := I_j \bigcup \{n+1, \dots, n+m\}$.
\begin{proposition}
\label{th:rip}
Under Assumption~\ref{hyp:sparse}, the collection $\{I_1^{(1)}, \dots, I_m^{(1)} \}$ of subsets $I_j^{(1)} \subseteq \{1, \dots, n, n+1, \dots, n + m\}$ satisfies the running intersection property.
\end{proposition}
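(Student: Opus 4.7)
The plan is straightforward set-theoretic manipulation: expand the sets $I_j^{(1)}$ according to their definition, compute the intersection on the left-hand side of the running intersection property, and then reduce to the running intersection property for the original collection $\{I_1,\dots,I_m\}$ granted by Assumption~\ref{hyp:sparse}.

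First, I would fix $j \in \{1,\dots,m-1\}$ and write
\[
\bigcup_{k=1}^{j} I_k^{(1)} \;=\; \Bigl(\bigcup_{k=1}^{j} I_k\Bigr) \,\cup\, \{n+1,\dots,n+m\},
\]
using that $I_k^{(1)} = I_k \cup \{n+1,\dots,n+m\}$ for every $k$. Then I would compute the intersection with $I_{j+1}^{(1)} = I_{j+1} \cup \{n+1,\dots,n+m\}$ by distributing over unions. The crucial observation is that $I_{j+1} \subseteq \{1,\dots,n\}$ and $\bigcup_{k=1}^j I_k \subseteq \{1,\dots,n\}$, so all mixed intersections between $I_{j+1}$ (or $\bigcup_{k=1}^j I_k$) and $\{n+1,\dots,n+m\}$ vanish. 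This leaves
\[
I_{j+1}^{(1)} \,\cap\, \bigcup_{k=1}^{j} I_k^{(1)} \;=\; \Bigl(I_{j+1}\cap\bigcup_{k=1}^{j} I_k\Bigr) \,\cup\, \{n+1,\dots,n+m\}.
\]

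Next, I would invoke item~1 of Assumption~\ref{hyp:sparse}: the collection $\{I_1,\dots,I_m\}$ already satisfies the running intersection property, so there exists some $l \leq j$ with $I_{j+1}\cap\bigcup_{k=1}^{j} I_k \subseteq I_l$. Adjoining the common block $\{n+1,\dots,n+m\}$ to both sides yields
\[
I_{j+1}^{(1)} \,\cap\, \bigcup_{k=1}^{j} I_k^{(1)} \;\subseteq\; I_l \,\cup\, \{n+1,\dots,n+m\} \;=\; I_l^{(1)},
\]
which is exactly the required property for $\{I_1^{(1)},\dots,I_m^{(1)}\}$.

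There is no real obstacle: the whole argument rests on the disjointness of the original index set $\{1,\dots,n\}$ from the lifted indices $\{n+1,\dots,n+m\}$, which makes the extra block $\{n+1,\dots,n+m\}$ inert under intersection. The only thing to be careful about is distributing intersection over union correctly and recording that $\{n+1,\dots,n+m\}$ lies in every $I_k^{(1)}$, so adding it to both sides of an inclusion preserves the inclusion.
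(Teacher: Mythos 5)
Your proof is correct and follows essentially the same route as the paper's: both reduce the claim to the running intersection property of $\{I_1,\dots,I_m\}$ via the identity $I_{j+1}^{(1)} \cap \bigcup_{k=1}^{j} I_k^{(1)} = \bigl(I_{j+1}\cap\bigcup_{k=1}^{j} I_k\bigr) \cup \{n+1,\dots,n+m\}$ and then adjoin the common block to both sides of the inclusion. (A minor remark: that identity is just $(A\cup C)\cap(B\cup C)=(A\cap B)\cup C$, which holds even without the disjointness you invoke, since the mixed terms are absorbed into $C$.)
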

\begin{proof}
\label{pr:rip1}
The collection $\{I_1, \dots, I_m \}$ of subsets $I_j \subseteq \{1, \dots, n\}$ satisfies the running intersection property. For each $j = 1, \dots, m-1$, there exists $l \leq j$ such that  $I_{j + 1} \cap \bigcup_{k=1}^j I_k \subseteq I_l$. Thus, $I_{j + 1}^{(1)} \cap \bigcup_{k=1}^j I_k^{(1)} = (I_{j + 1} \cap \bigcup_{k=1}^j I_k) \bigcup \{n+1, \dots, n+m\} \subseteq I_l \bigcup  \{n+1, \dots, n+m\} = I_l^{(1)}$, the desired result.
\end{proof}

Then Assumption~\ref{hyp:sparse} allows one to apply the sparse representation result of~\cite[Corollary 3.9]{Las06Sparse} to the semidefinite program~\eqref{eq:primalexists} associated with Method 1. Indeed, observe that the polynomial $(\x, \y) \mapsto h_f(\x, \y)$ can be decomposed as $h_f = \sum_{j=1}^m {h_f}_j$, where for each $j=1, \dots, m$, the polynomial ${h_f}_j$ involves only the variables $\{x_i \, | \, i \in I_j \}$ (the same variables involved in $f_j$) and $\y$. 

Under Assumption~\ref{hyp:sparse}, this sparse representation result can also be applied for the semidefinite program~\eqref{eq:lpstrengthlift} associated with the lifting variant of Method 2 described in Section~\ref{sec:lift2}. This is due to the fact that for each $j=1, \dots, m$, the polynomials $g^\S_{n^\S+j}$ and $g^\S_{n^\S+2 j}$  involve only the variables $\{x_i \, | \, i \in I_j \}$ and $\y$.
\section{Application examples}
\label{sec:benchs}
Here we present some application examples together with numerical results. In particular, this section illustrates that our methodology is a unified framework which can tackle important special cases: semi-algebraic set projections (Section~\ref{sec:projection})
and Pareto curves approximations (Section~\ref{sec:pareto}). Moreover, the framework can be extended to approximate images of semi-algebraic sets under semi-algebraic applications (Section~\ref{sec:saimage}).

The numerical results are given after solving either the semidefinite program~\eqref{eq:primalexists} for Method 1, the semidefinite program~\eqref{eq:lpstrength} for Method 2 or the semidefinite program~\eqref{eq:lpstrengthlift} for the lifting variant of Method 2, with the {\sc Yalmip} toolbox~\cite{YALMIP} for {\sc Matlab}.
As explained in Section \ref{sec:lift2}, the outer approximations obtained by Method 2
and its lifting variant are the same, but their semidefinite formulations differ.

Benchmarks are performed on an Intel Core i5 CPU ($2.40\, $GHz) with {\sc Yalmip} interfaced with the semidefinite programming
solver {\sc Mosek}~\cite{mosek}.

\subsection{Polynomial image of semi-algebraic sets}
\label{sec:polbenchs}

\begin{example}\label{ex:ballimage}
Consider the image of the two-dimensional unit ball $\S := \{\x \in \R^2 : \| \x \|_2^2 \leq 1 \}$ under the polynomial application $f(\x) := (x_1+x_1 x_2, x_2-x_1^3) / 2$. We choose $\B=\S$ since it can be checked that $\F=f(\S) \subset \B$.
\end{example}

 On Figure~\ref{fig:ballimageexists} resp.~\ref{fig:ballimagemeasureimage}, we represent in light gray the outer approximations $\F^1_r$ resp. $\F^2_r$ of $\F$ obtained by Method 1 resp. 2, for increasing values of
the relaxation order $r$. On each figure, the black dots correspond to the image set of the points obtained by uniform sampling of $\S$ under $f$.
We observe that the approximations behave well around the locally convex parts of the boundary of $\F$,
and  that it is not straightforward to decide whether Method 1 or Method 2 provides the best approximations. 

\begin{figure}[!ht]
\centering
\subfigure[$r=1$]{
\includegraphics[scale=\sizetinyfig]{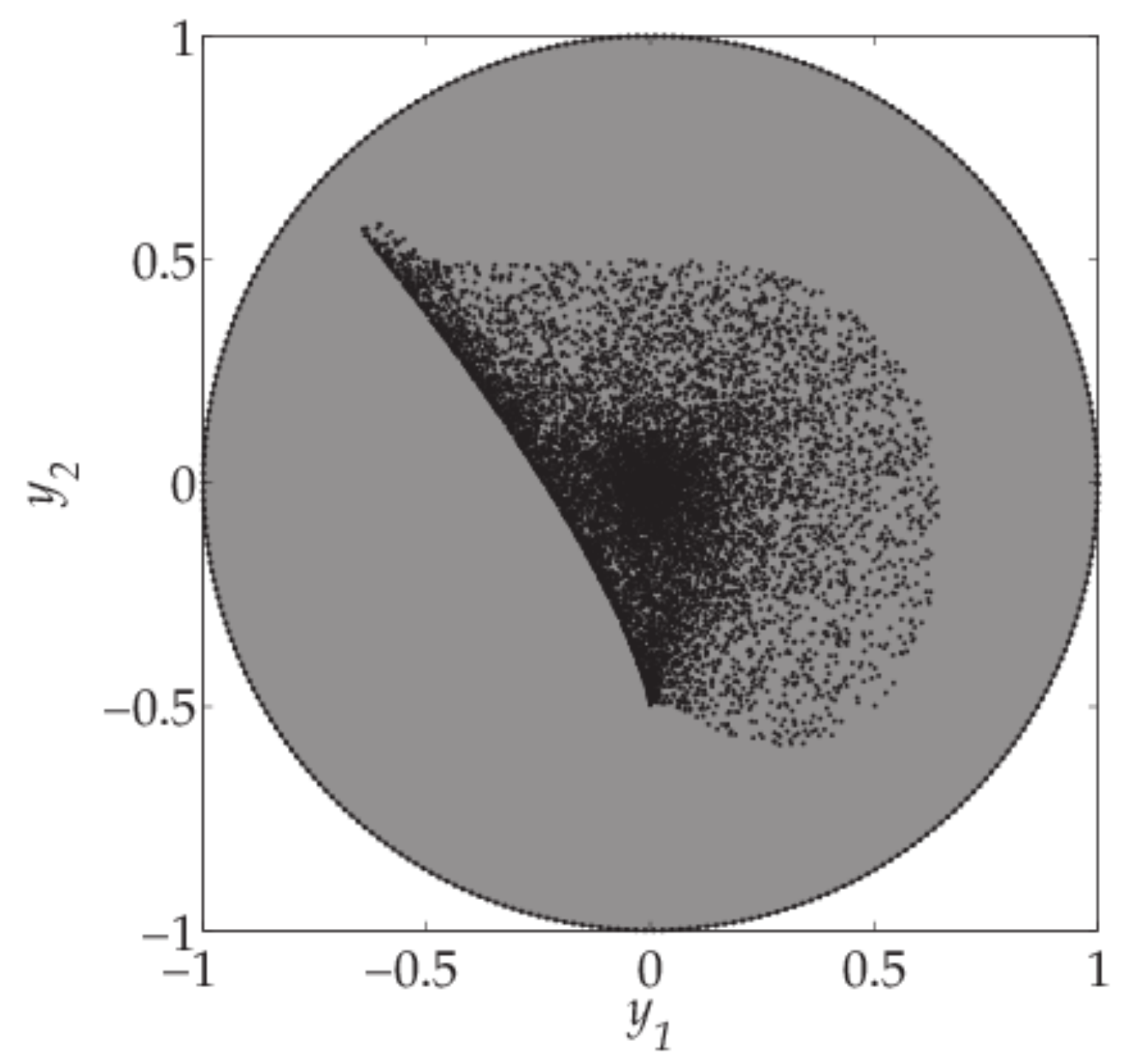}}
\subfigure[$r=2$]{
\includegraphics[scale=\sizetinyfig]{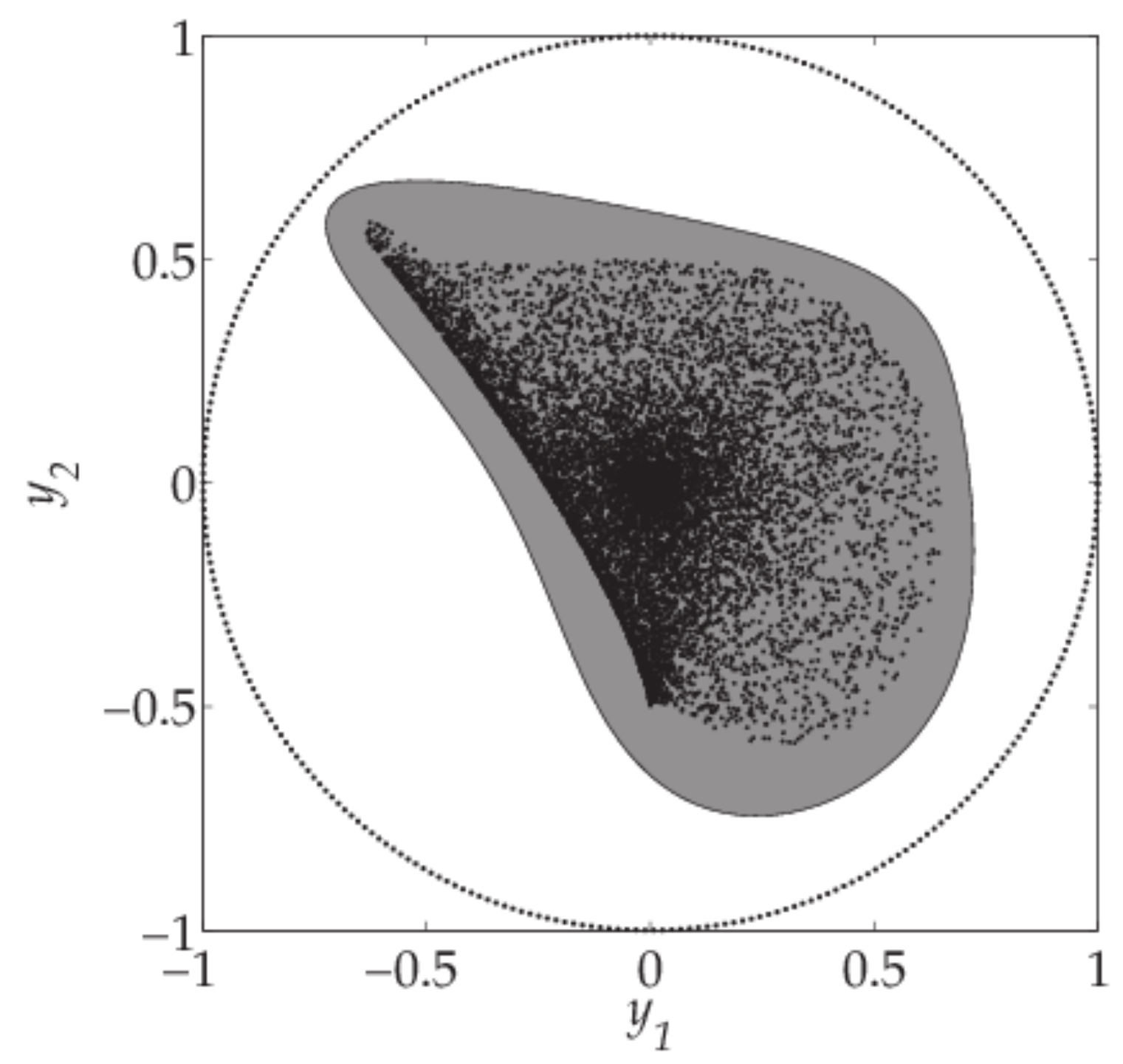}}
\subfigure[$r=3$]{
\includegraphics[scale=\sizetinyfig]{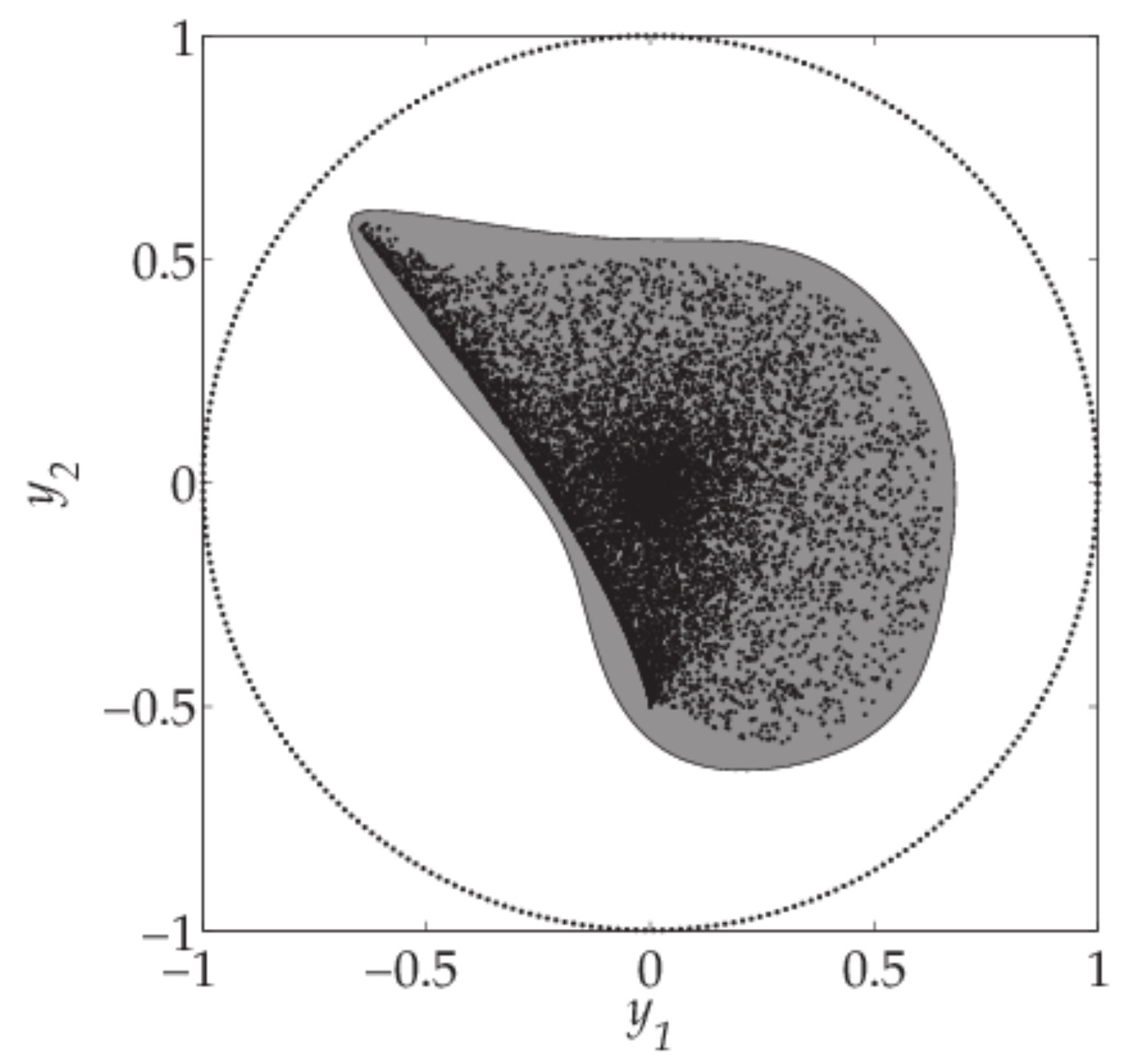}}
\subfigure[$r=4$]{
\includegraphics[scale=\sizetinyfig]{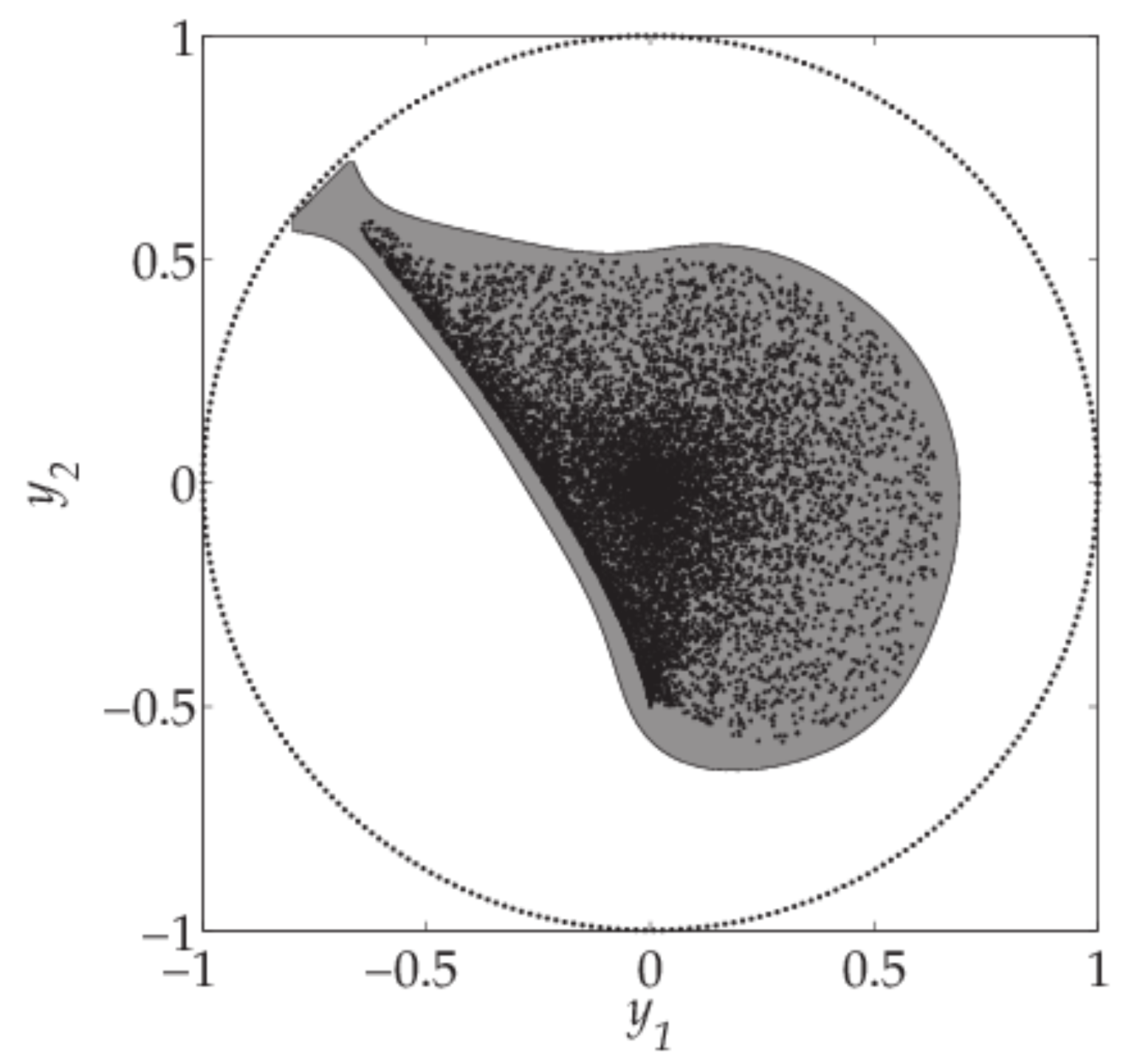}}
 \caption{Outer approximations $\F^1_r$ (light gray) of $\F$ (black dot samples)
for Example~\ref{ex:ballimage}, for $r=1,2,3,4$.}\label{fig:ballimageexists}
\end{figure}
\begin{figure}[!ht]
\centering
\subfigure[$r=1$]{
\includegraphics[scale=\sizetinyfig]{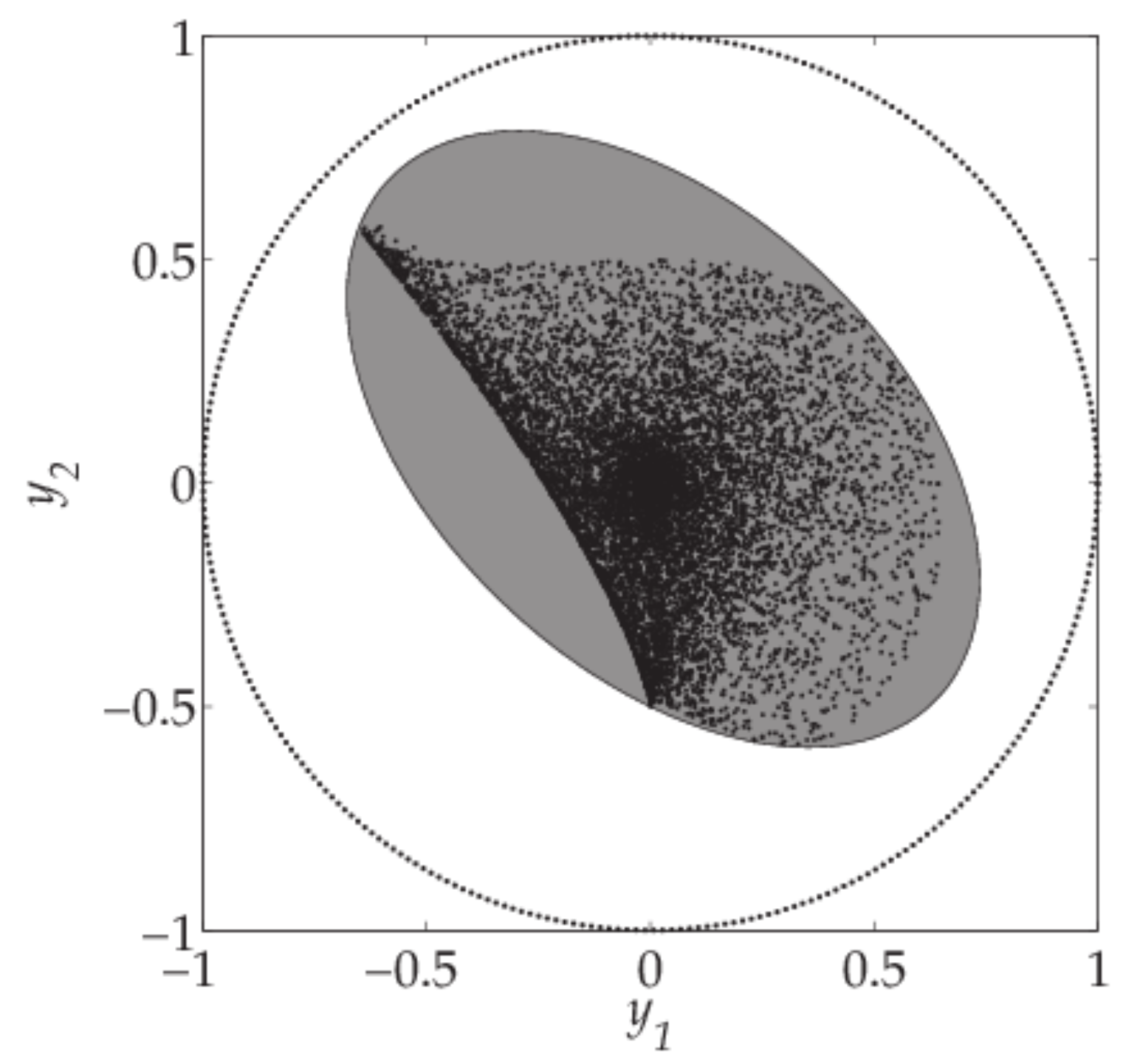}}
\subfigure[$r=2$]{
\includegraphics[scale=\sizetinyfig]{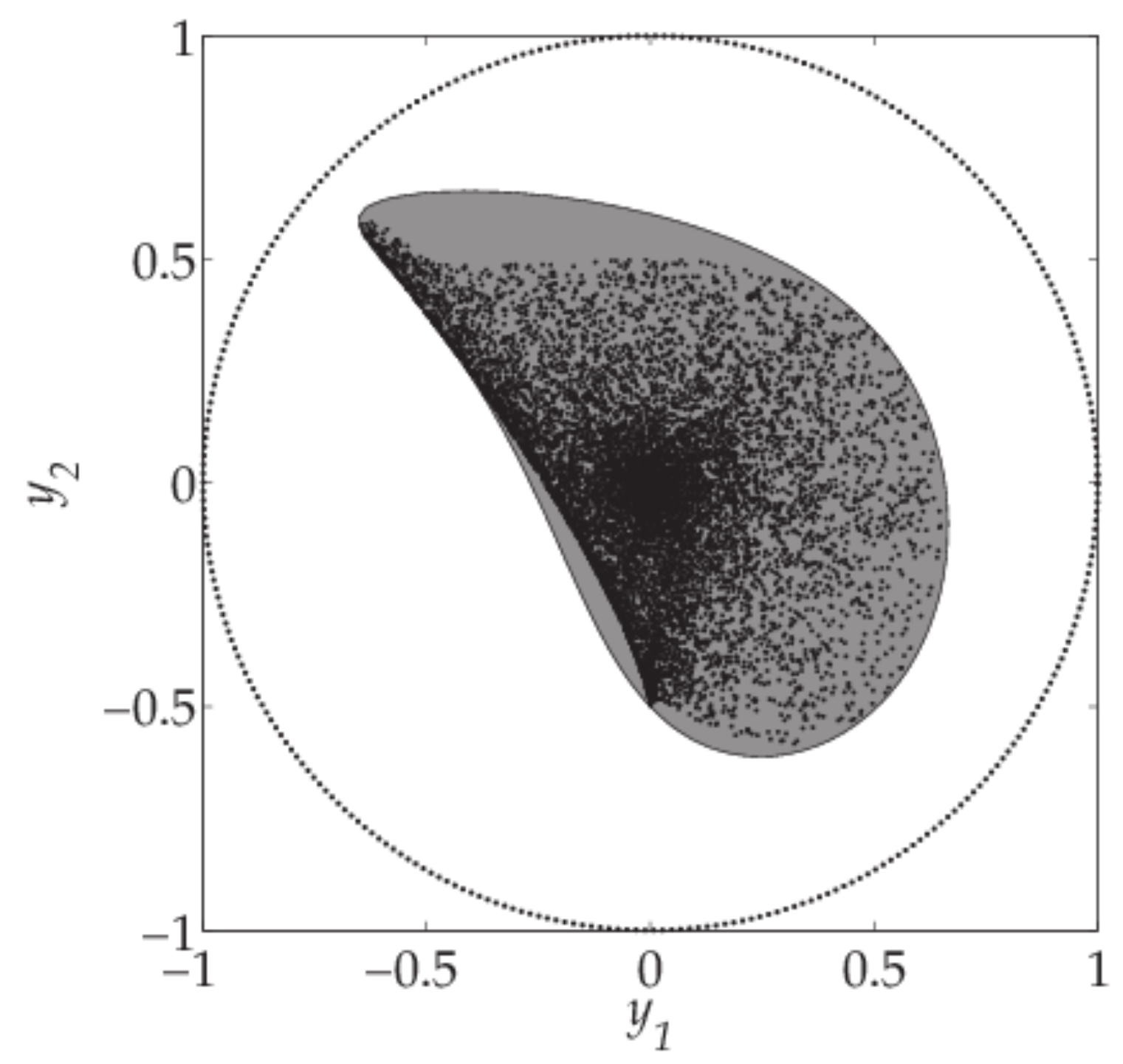}}
\subfigure[$r=3$]{
\includegraphics[scale=\sizetinyfig]{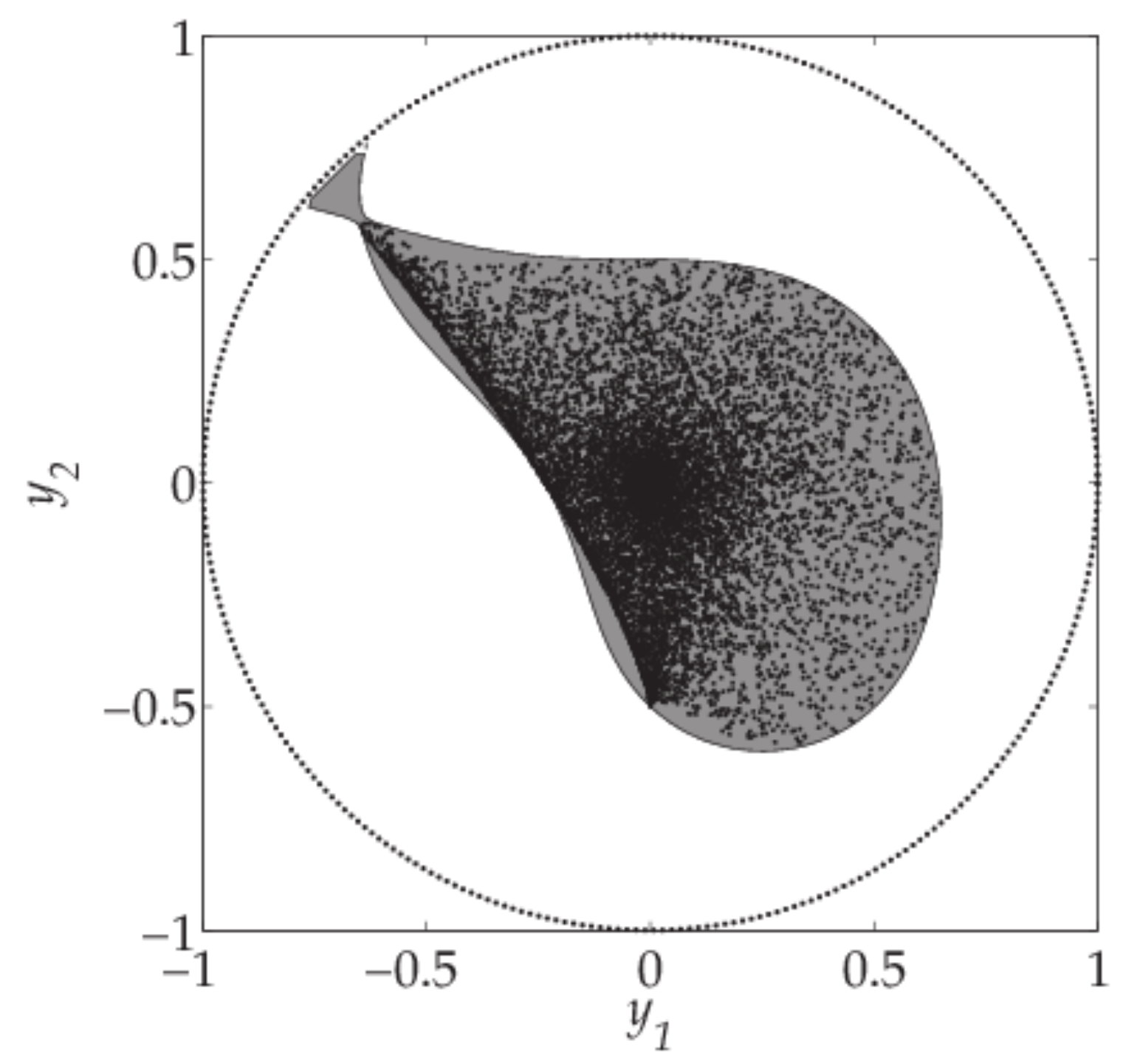}}
\subfigure[$r=4$]{
\includegraphics[scale=\sizetinyfig]{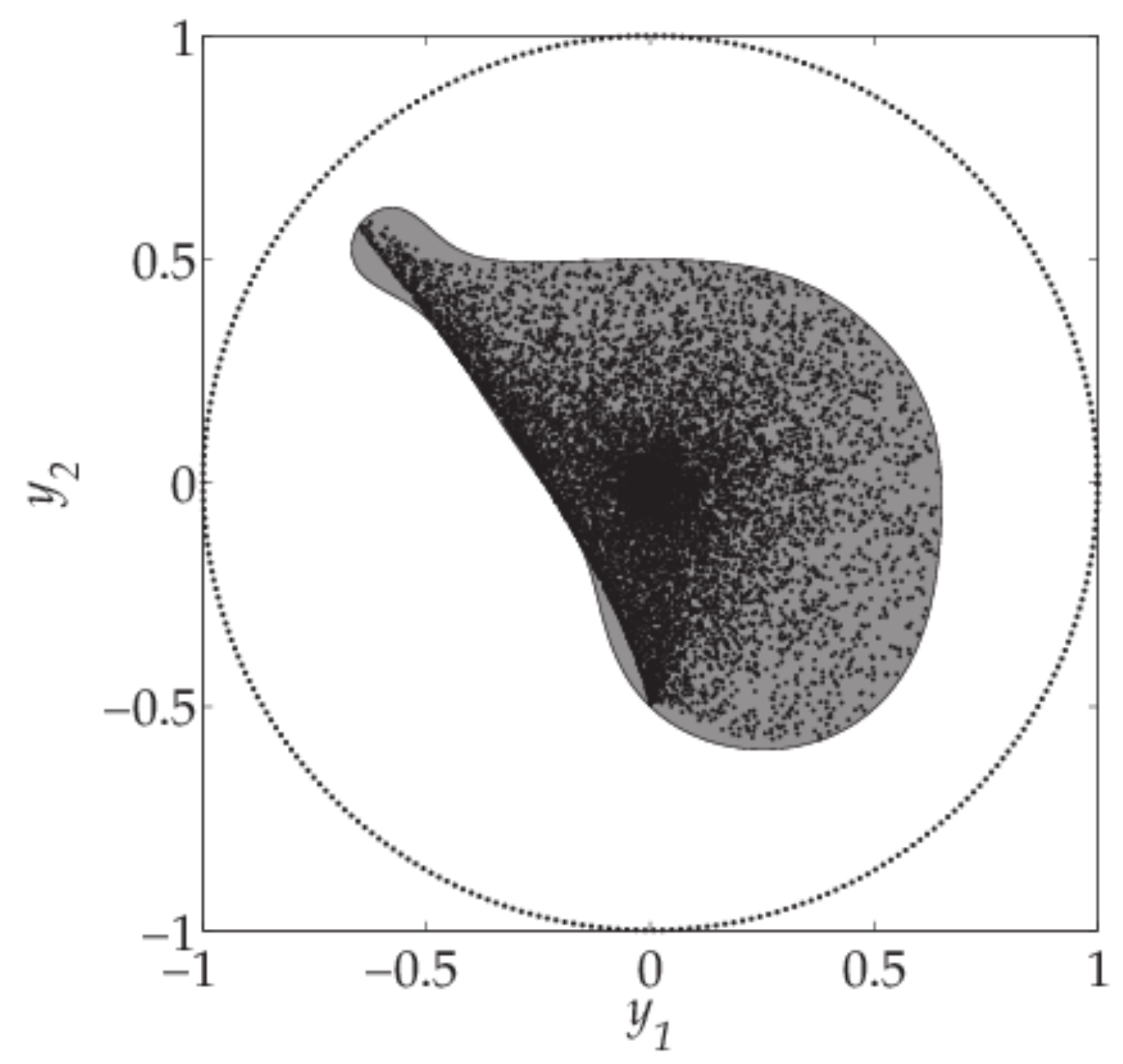}}
 \caption{Outer approximations $\F^2_r$ (light gray) of $\F$ (black dot samples)
for Example~\ref{ex:ballimage}, for $r=1,2,3,4$. }	\label{fig:ballimagemeasureimage}
\end{figure}

We indicate in Table~\ref{table:image} the data related to the semidefinite programs solved by {\sc Mosek} to compute approximations of increasing degrees, while using Method 1, Method 2 and Method 2 with the lifting strategy (see Section~\ref{sec:lift2} for more details). 
For each problem, ``vars'' stands for the total number of variables and ``size'' stands for the size of the semidefinite matrices. 
The computational timings of Method 2 with the lifting strategy are similar to those of Method 1, for $r = 1, \dots, 5$.
However, for $r = 6$ the size of the problem is significantly smaller with Method 1.

\begin{table}[!ht]
\begin{center}
\caption{Comparison of timing results for Example~\ref{ex:ballimage}}
\begin{tabular}{p{2.3cm}|c|cccccc}
\hline
\multicolumn{2}{c|}{relaxation order $r$}
 & 1 & 2 & 3 & 4 & 5 & 6
\\
\hline            
\multirow{3}{*}{Method 1} & vars &  $40$ & $212$ & $1039$ & $4211$ & $14028$ & $40251$ \\
& size & 30 & 111 & 350 & 915 & 1991 & 3822\\
& time (s) & $0.64 $ & $0.72 $ &  $0.77 $ & $1.69 $ & $8.22 $ & $40.37  $\\
\hline
\multirow{3}{*}{Method 2} & vars &  $286$ & $2140$ & $8241$ & $22720$ & $51166$ & $100626$   \\
& size & 129 & 471 & 1029 & 1803 & 2793 & 3999\\
& time (s) & $0.65 $ & $0.74 $ &  $1.54 $ & $3.4 $ & $12.89 $ & $43.74 $\\
\hline
\multirow{3}{2.3cm}{Method 2 with lifting} & vars &  $51$ & $308$ & $1499$ & $5882$ & $19546$ & $56710$   \\
    &  size & 32 & 157 & 536 & 1411 & 3128 & 6127 \\
 & time (s) & $0.58 $ & $0.66 $ &  $0.68$ & $1.93 $ & $10.07 $ & $63.88 $\\
\hline
\end{tabular}
\label{table:image}
\end{center}
\end{table}
\subsection{Projections of semi-algebraic sets}
\label{sec:projection}
For $n \geq m$, we focus on the special case of projections. Let $f$ be the projection of $\S$ with respect to the $m$ first coordinates, i.e. $f(\x) := (x_1, \dots, x_m)$. It turns out that in this case, the semidefinite program~\eqref{eq:lpstrengthlift} associated to the lifting variant of Method 2, has the following simpler formulation:
\begin{equation}
\label{eq:lpstrengthliftproj}
\begin{aligned}
\inf\limits_{w} \quad & \sum_{\beta \in \N_{2 r}^m} w_\beta  z^\B_\beta \\		
\text{s.t.} \quad & w - 1 \in \Q_{r}(\S)  , \\
\quad & w \in \Q_r(\B)  , \\
\quad & w \in \R_{2 r}[x_1, \dots, x_m]  . \\
\end{aligned}
\end{equation}
%
\begin{example}
\label{ex:proj}
Consider the projection $\F$ on the first two coordinates of the semi-algebraic set $\S := \{\x \in \R^3 : \|x\|^2_2 \leq 1, \, 1/4 - (x_1 + 1/2)^2 - x_2^2 \leq 0, \, 1/9 - (x_1 - 1/2)^4 - x_2^4 \leq 0   \}$, which belongs to $\B := \{x \in \R^2 : \|x\|^2_2 \leq 1\}$.
\end{example}
\begin{figure}[!ht]
\centering
\subfigure[$r=2$]{
\includegraphics[scale=\sizesmallfig]{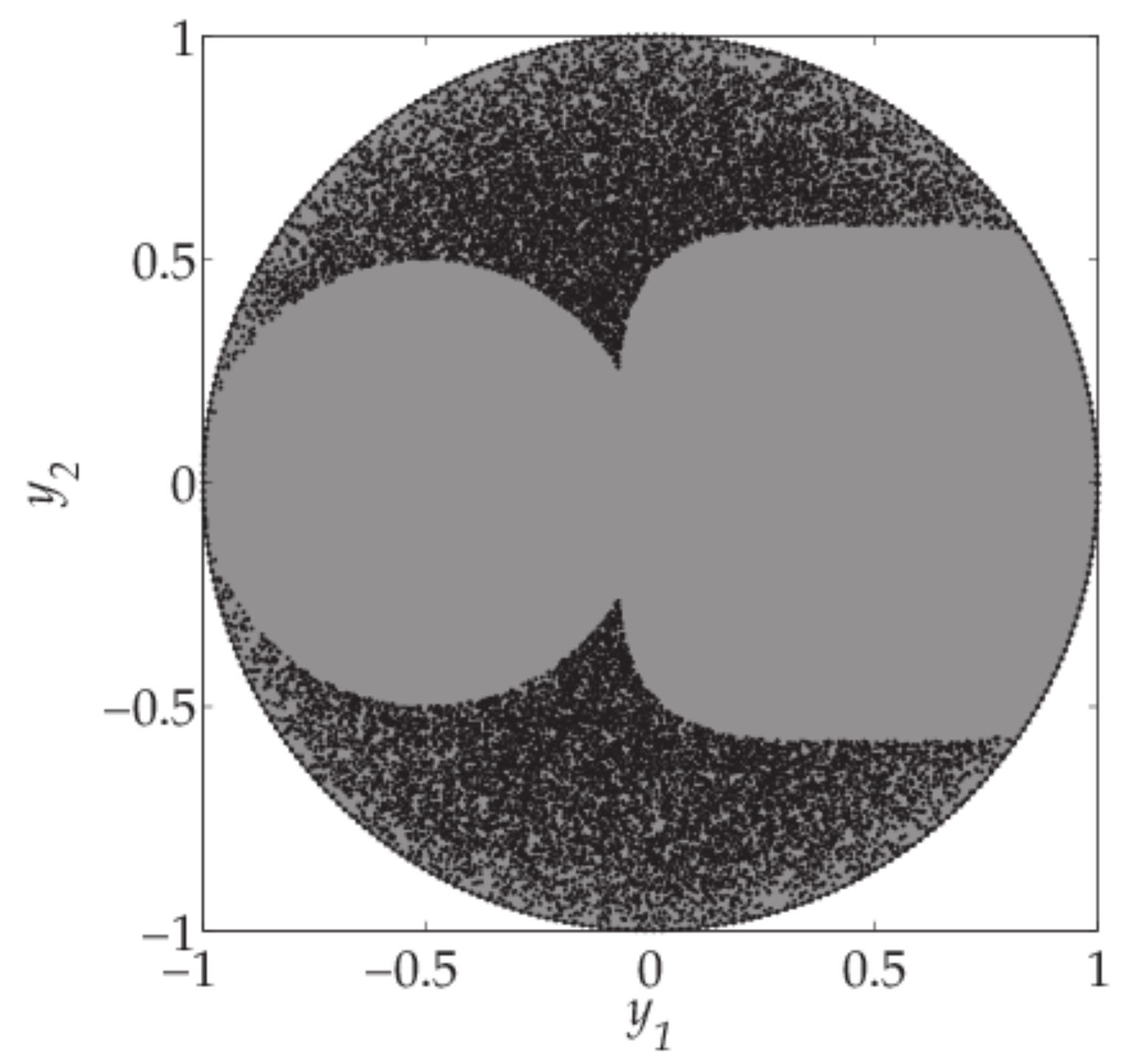}}
\subfigure[$r=3$]{
\includegraphics[scale=\sizesmallfig]{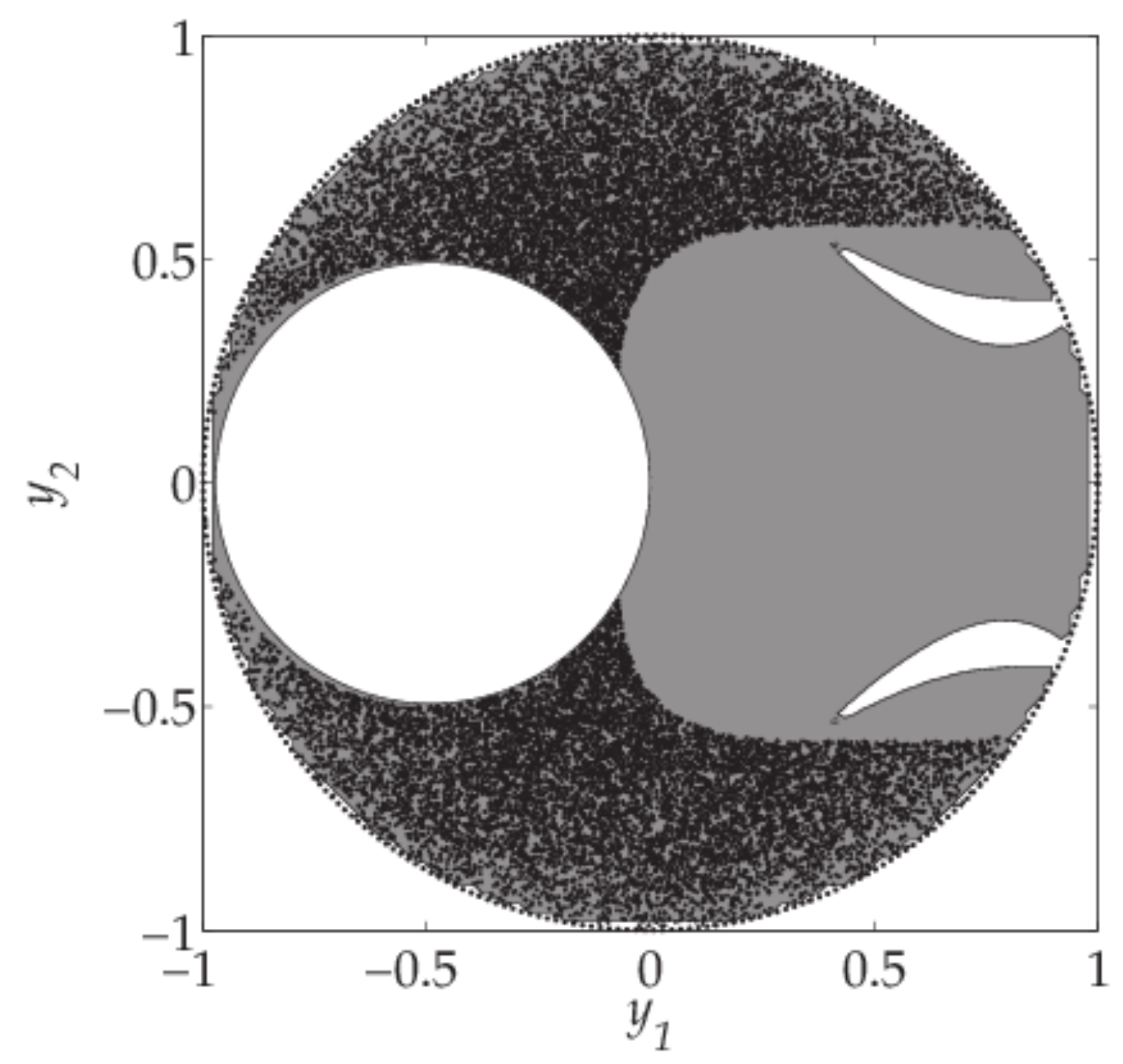}}
\subfigure[$r=4$]{
\includegraphics[scale=\sizesmallfig]{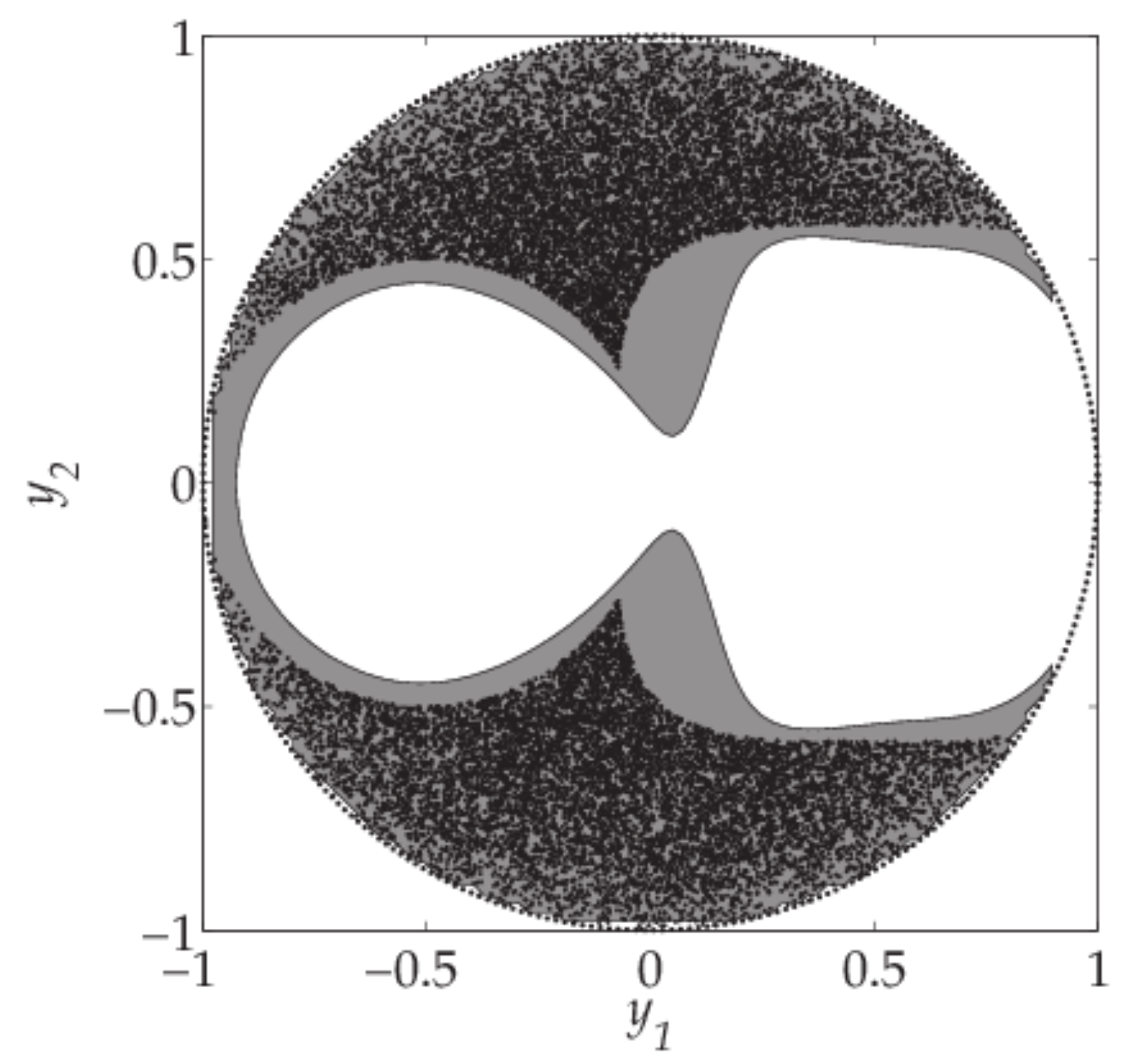}}
 \caption{Outer approximations $\F^1_r$ (light gray) of $\F$ (black dot samples)
for Example~\ref{ex:proj}, for $r=2,3,4$. }	\label{fig:projexists}
\end{figure}
\begin{figure}[!ht]
\centering
\subfigure[$r=2$]{
\includegraphics[scale=\sizesmallfig]{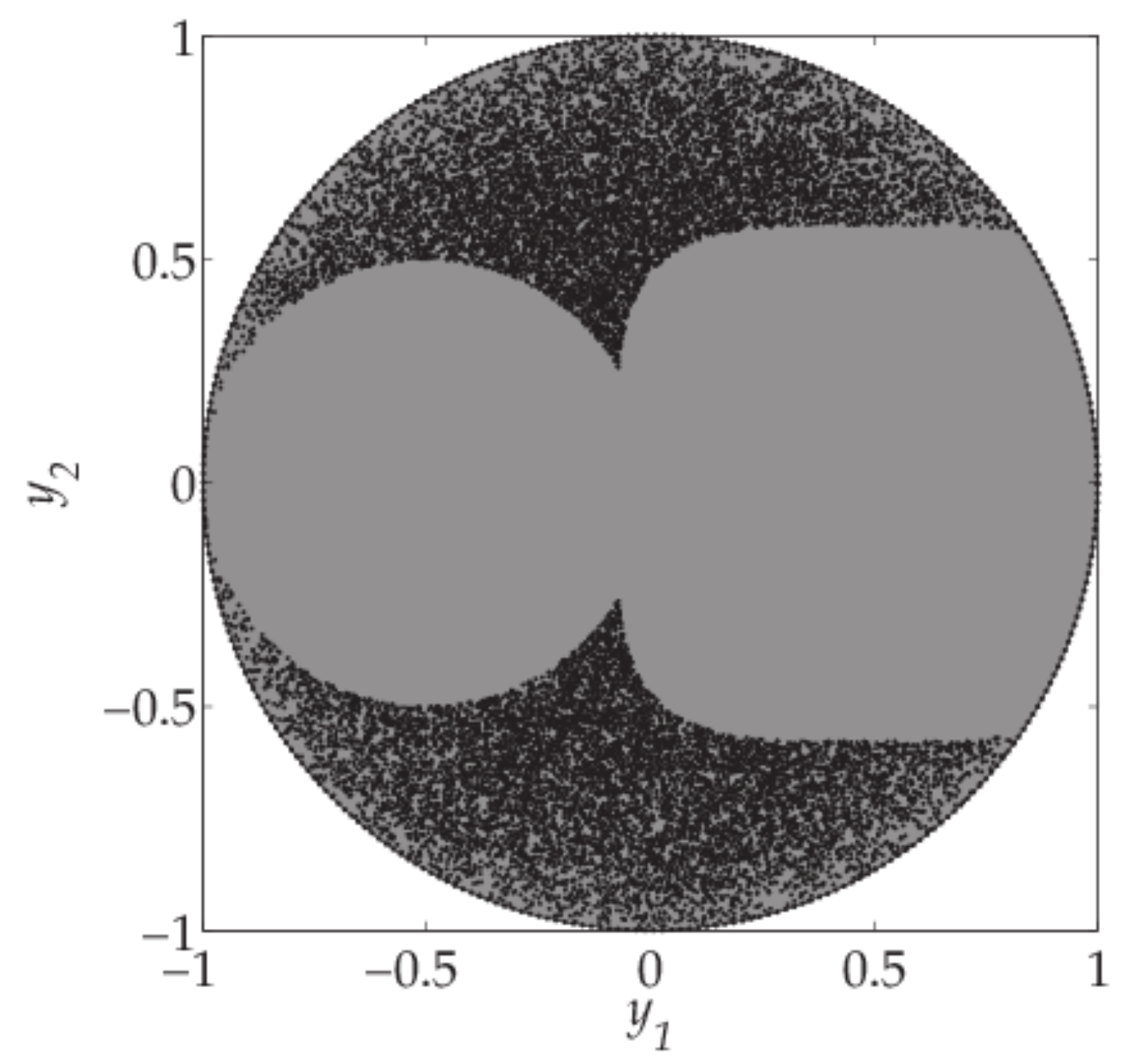}}
\subfigure[$r=3$]{
\includegraphics[scale=\sizesmallfig]{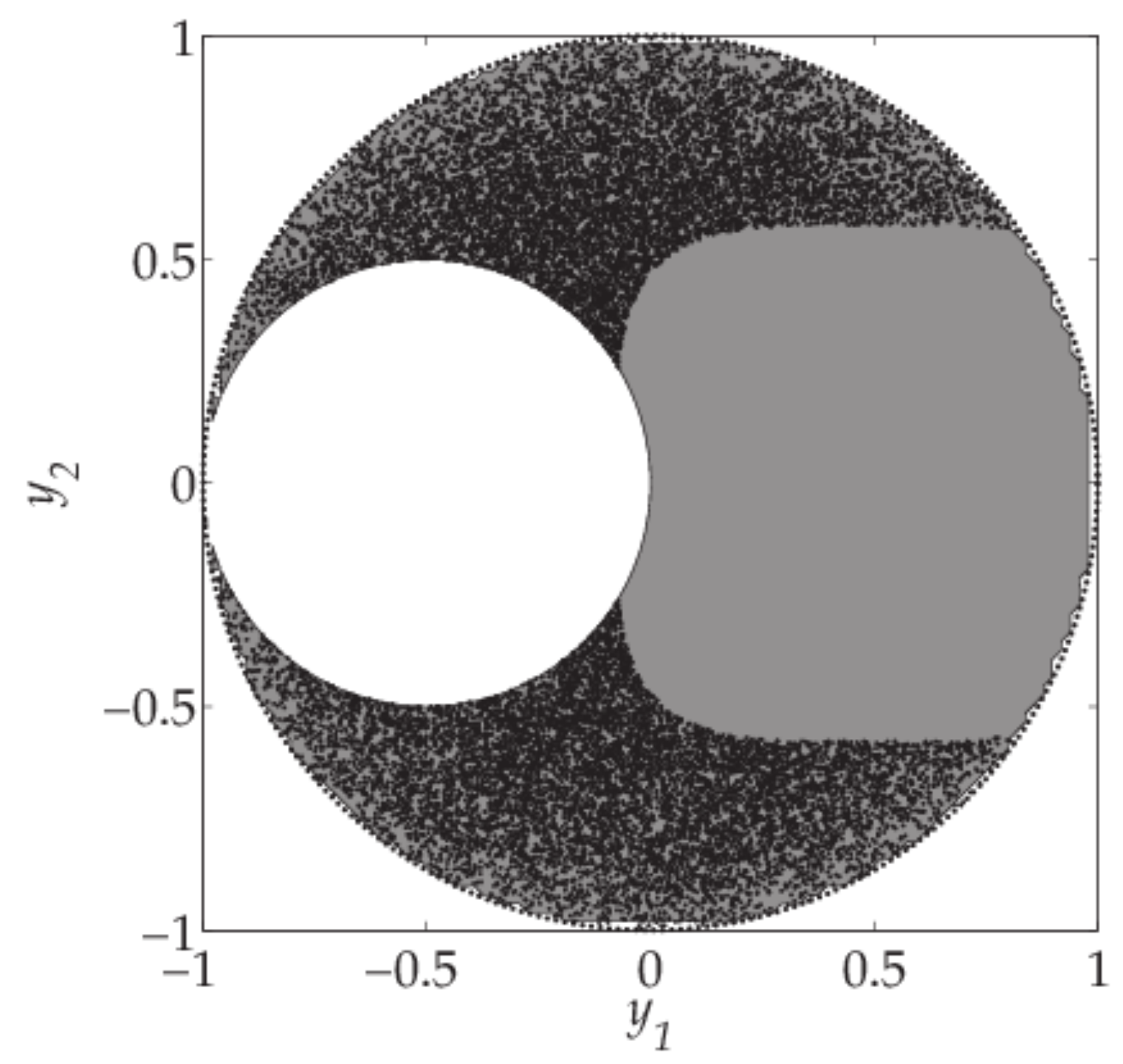}}
\subfigure[$r=4$]{
\includegraphics[scale=\sizesmallfig]{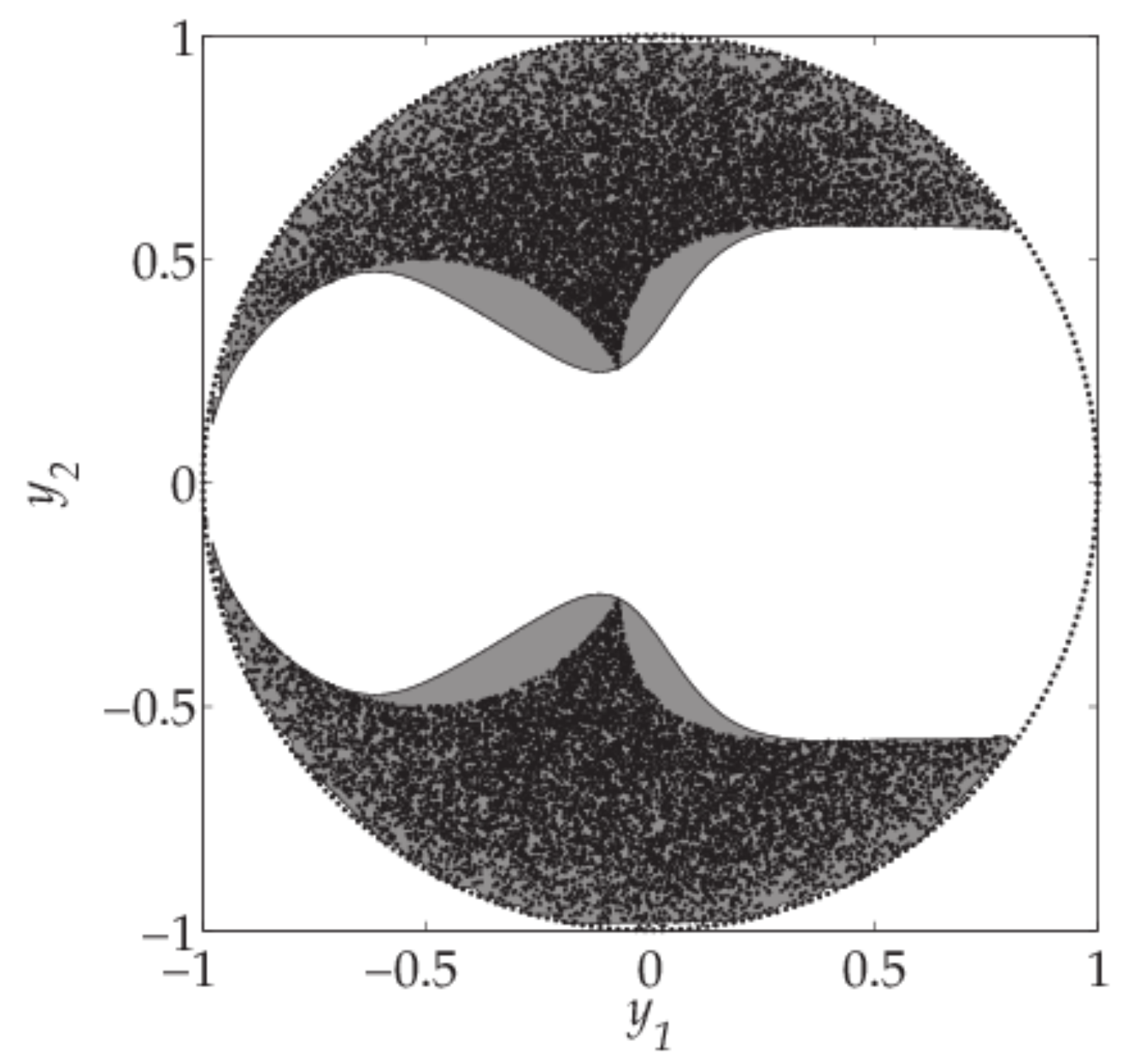}}
 \caption{Outer approximations $\F^2_r$ (light gray) of $\F$ (black dot samples)
for Example~\ref{ex:proj}, for $r=2,3,4$. }	\label{fig:projmeasureimage}
\end{figure}
Figure~\ref{fig:projexists} resp.~\ref{fig:projmeasureimage} displays approximation of the projection of $\S$ on the first two coordinates with Method 1 resp. 2.

\subsection{Approximating Pareto curves}
\label{sec:pareto}
In~\cite{orl14}, we propose a method to approximate {\em Pareto curves} associated with bicriteria polynomial optimization problems $\min_{\x \in \S}\{ (f_1(\x), f_2(\x)) \}$. 
The image space $\R^2$ is partially ordered with the positive orthant $\R_{+}^2$, that is, for every $\y_1, \y_2 \in \R^2$, $\y_1 \geq \y_2$ stands for $\y_2-\y_1 \in \R_+^2$. A point $\bar{\x}$ is  called a {\it weakly Edgeworth-Pareto optimal point}, when there is no $\x \in \S$ such that $f_j(\x) < f_j (\bar{\x}), \: j = 1,2$. The Pareto curve is the set of weakly Edgeworth-Pareto optimal points. For more details on multicriteria optimization, we refer the interested reader to~\cite{jahn:2010:vector} and the references therein.

The methodology of~\cite{orl14} consists of reformulating the initial bicriteria optimization problem to use a hierarchy of semidefinite approximations for parametric polynomial optimization problems. Then, one can apply the framework developed in~\cite{Lasserre:2010:JMA} and build a hierarchy of semidefinite programs, allowing to approximate as closely as desired the Pareto curve. Here we propose to study outer approximations of the set $\F=(f_1(\S), f_2(\S))$ since points along the boundary
of a tight outer approximation are expected to be close to the Pareto curve.

\begin{example}
\label{ex:pareto}
Let consider the two-dimensional nonlinear problem proposed in~\cite{Wilson2001}:\\
$\min_{\x \in \S}\{ (f_1(\x), f_2(\x))  \}$, with $f_1(\x):= \tfrac{(x_1+x_2-7.5)^2}{4} + (x_2-x_1+3)^2$, $f_2(\x) := x_1 + x_2^2$ and $\S := \{\x \in \R^2 : -(x_1-2)^3/2 - x_2 + 2.5 \geq 0, -x_1 - x_2 + 8 (x_2-x_1+0.65)^2 + 3.85 \geq 0 \}$. Instead of $f_1$, we consider $\tilde{f}_1 := (f_1 (\x) - a_1) / (b_1 - a_1)$, where $a_1$ and $b_1$ are given by $a_1 := \min_{\x  \in \S} f_1(\x)$ and  $b_1 := f_1(\overline{\x})$
with $\overline{\x}$ a solution of $\min_{\x \in \S} f_2(\x)$. Similarly, we consider a scaled criterion $\tilde{f}_2$ defined from $f_2$.
A preprocessing step consists in computing lower and upper bounds of the polynomial $f_1$ (resp. $f_2$) over $\S$ to define $\tilde{f} = (\tilde{f}_1, \tilde{f}_2)$. Doing so, one ensures that $\tilde{f}(\S)$ is a subset of the unit ball $\B$ and the present methodology applies.
\end{example}
\begin{figure}[!ht]
\centering
\subfigure[$r=1$]{
\includegraphics[scale=\sizesmallfig]{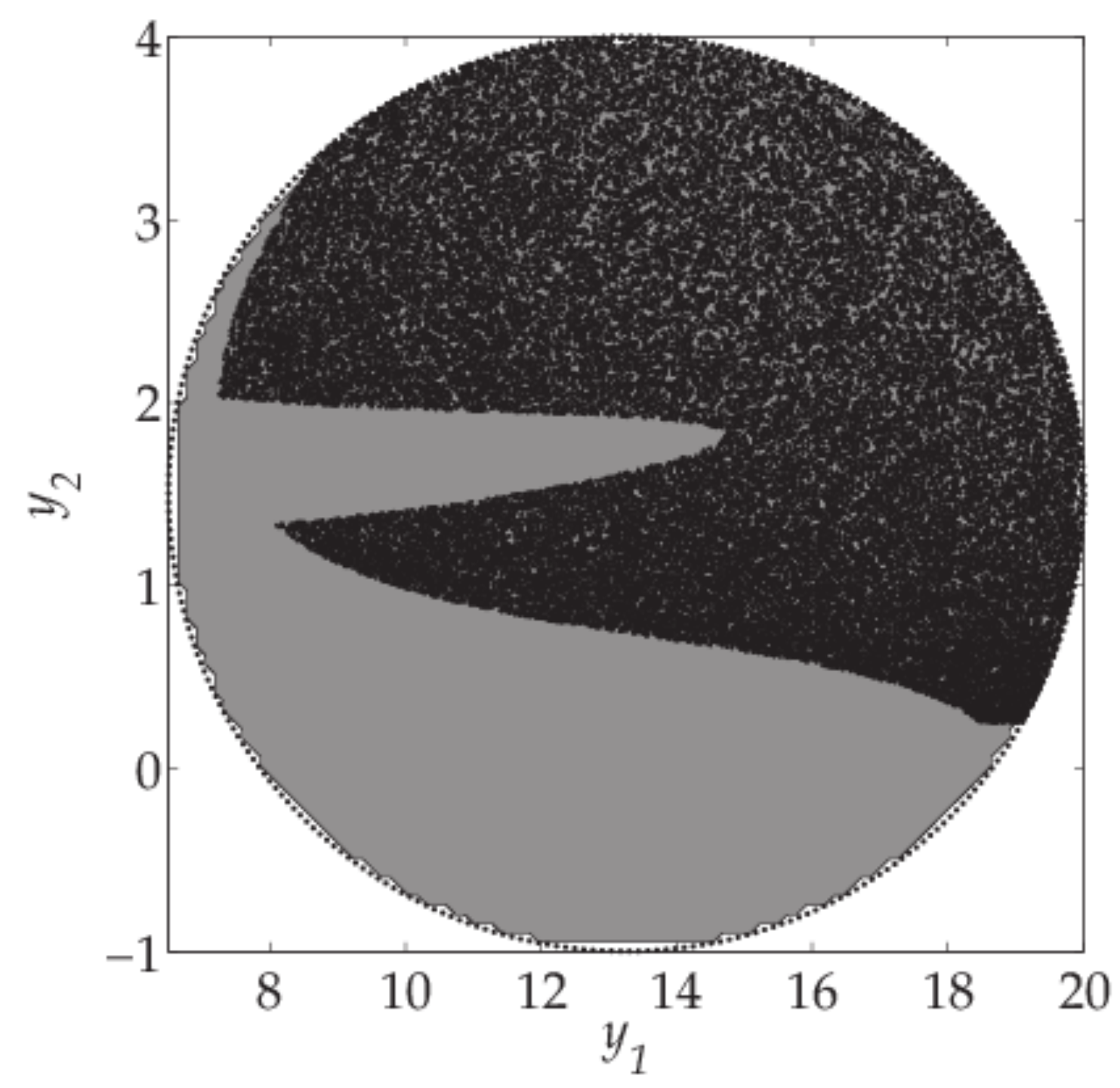}}
\subfigure[$r=2$]{
\includegraphics[scale=\sizesmallfig]{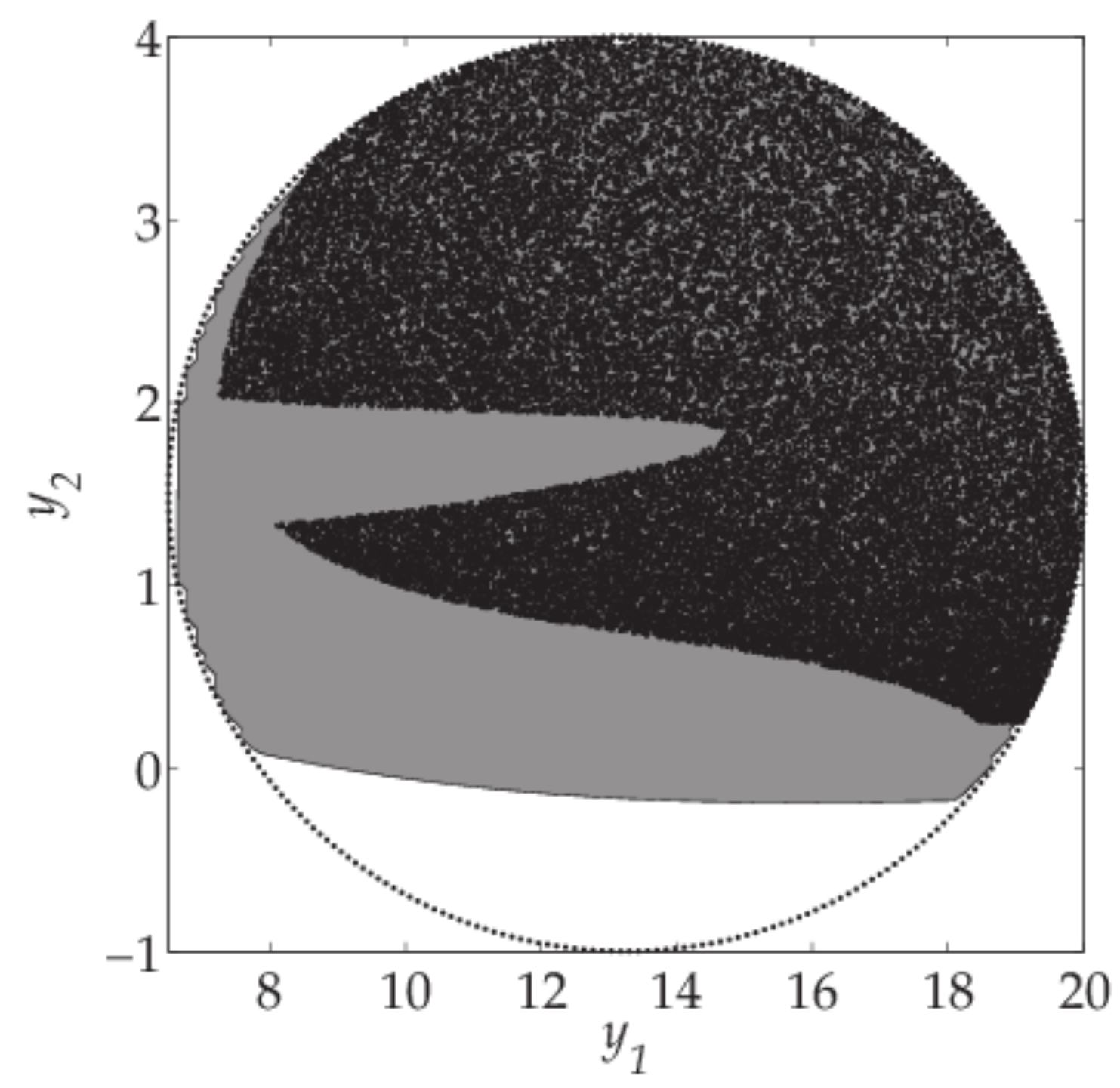}}
\subfigure[$r=4$]{
\includegraphics[scale=\sizesmallfig]{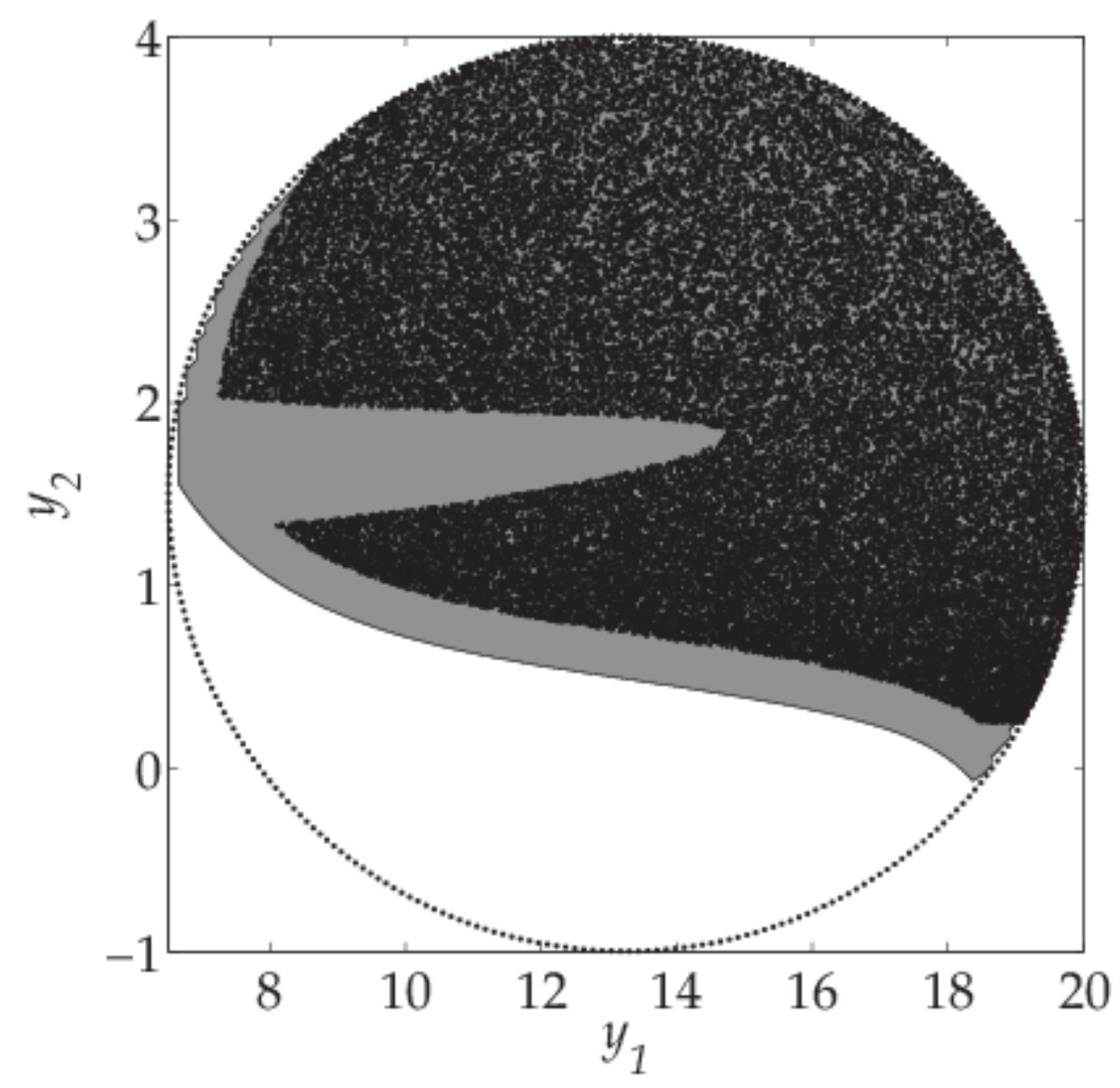}}
 \caption{Outer approximations $\F^1_r$ (light gray) of $\F$ (black dot samples)
for Example~\ref{ex:pareto}, for $r=1,2,4$.  }	\label{fig:paretoexists}
\end{figure} 
\begin{figure}[!ht]
\centering
\subfigure[$r=1$]{
\includegraphics[scale=\sizesmallfig]{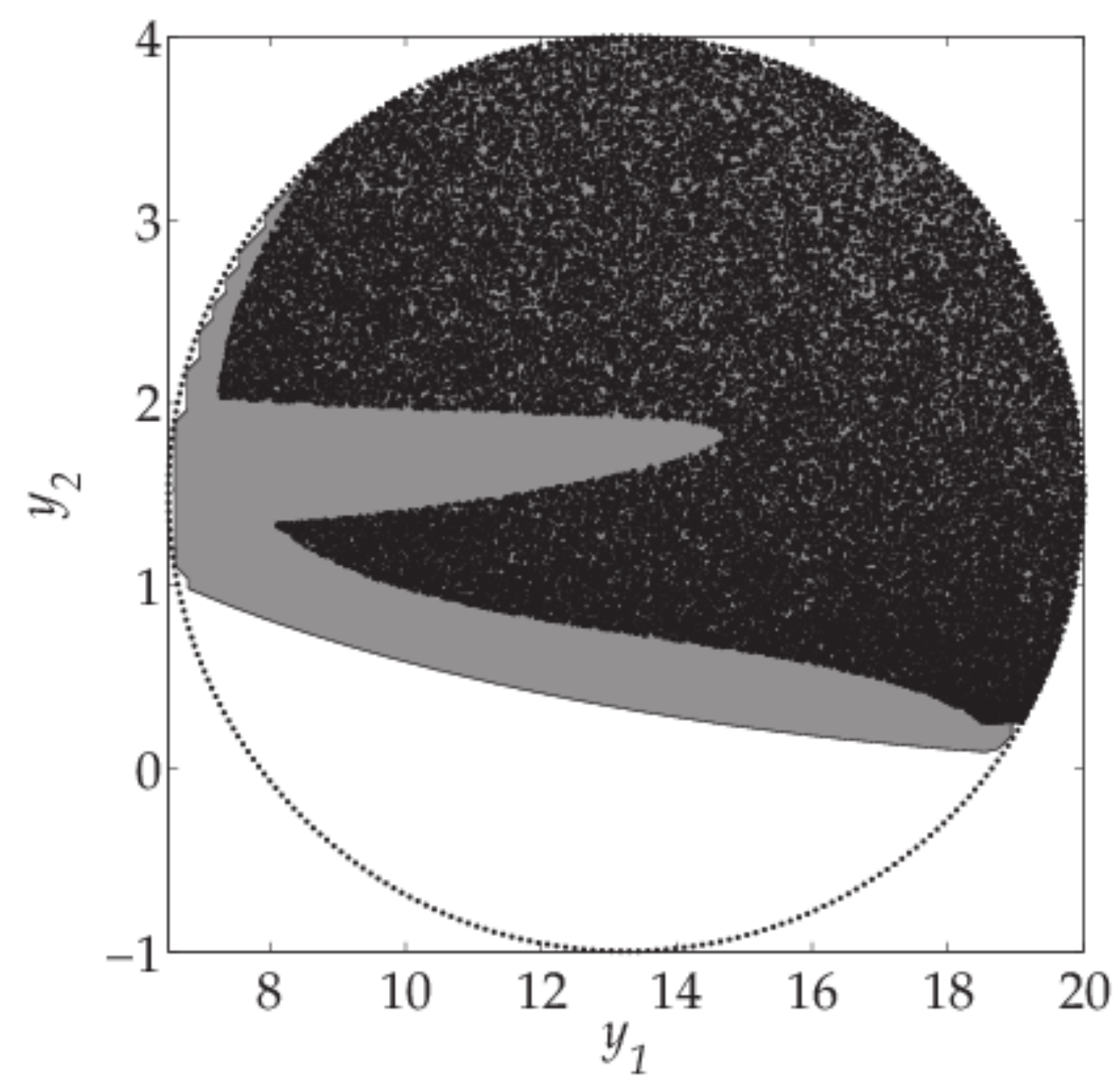}}
\subfigure[$r=2$]{
\includegraphics[scale=\sizesmallfig]{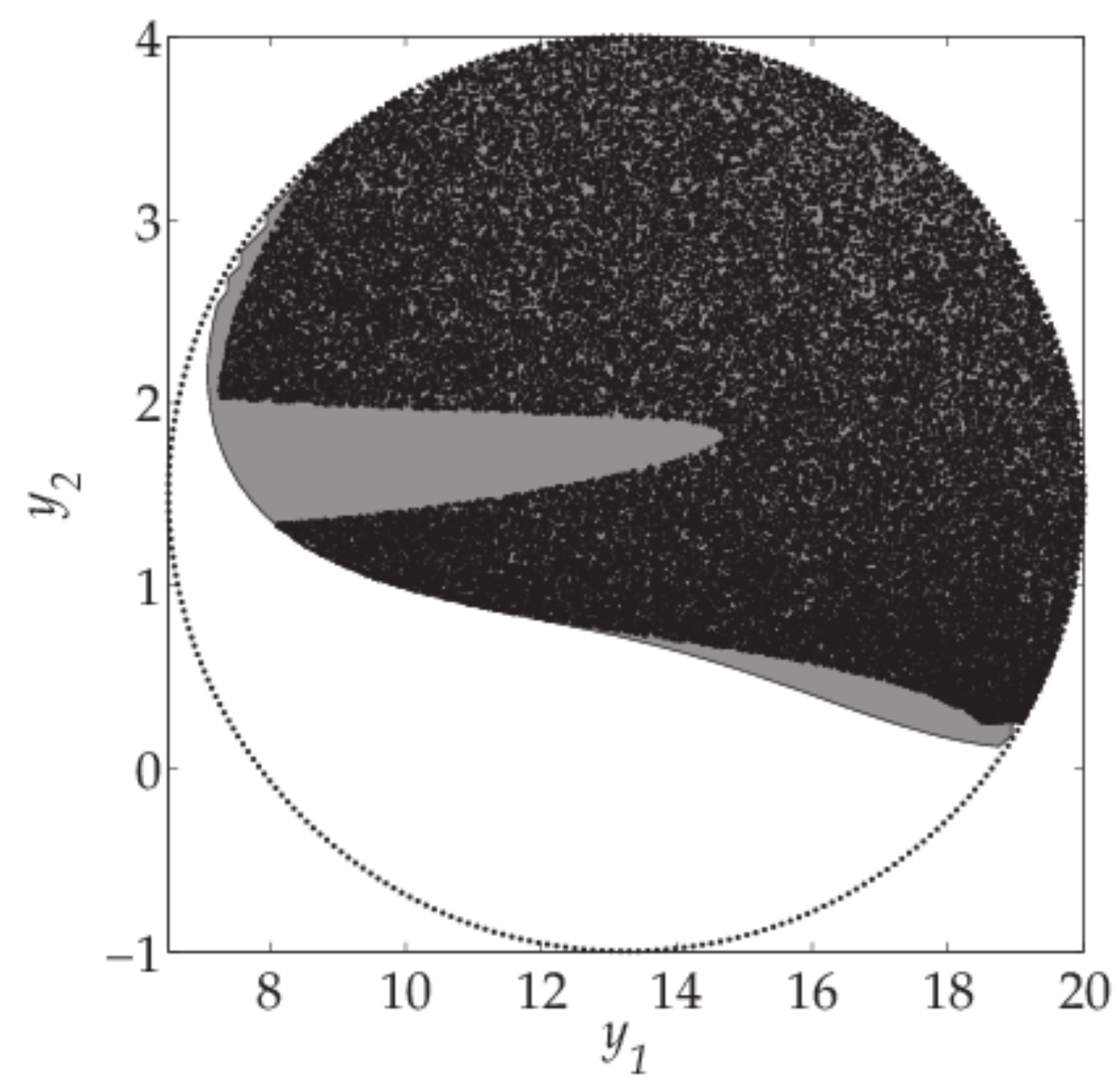}}
\subfigure[$r=4$]{
\includegraphics[scale=\sizesmallfig]{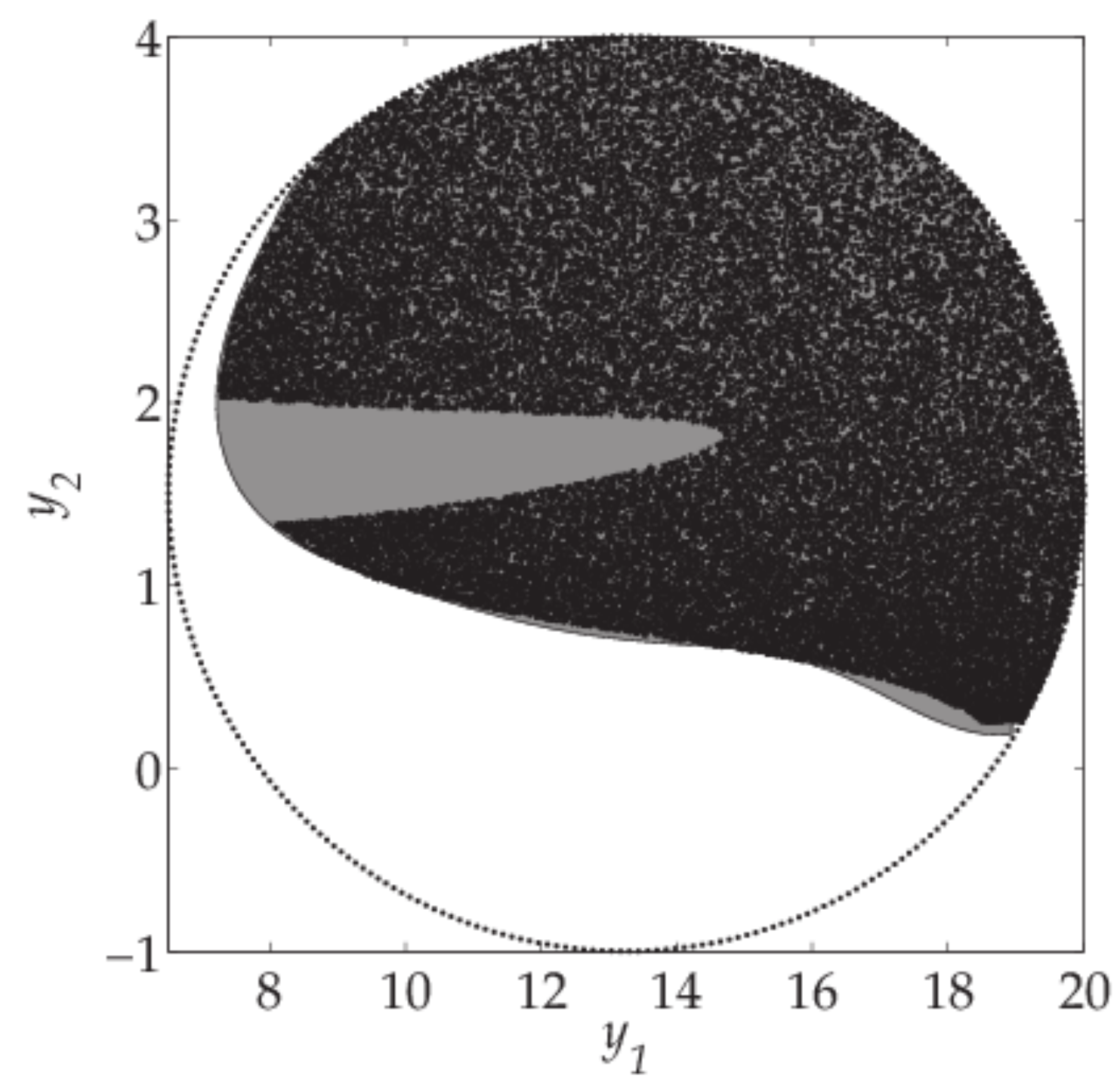}}
 \caption{Outer approximations $\F^2_r$ (light gray) of $\F$ (black dot samples)
for Example~\ref{ex:pareto}, for $r=1,2,4$.   }	\label{fig:paretomeasureimage}
\end{figure}
In this case, the Pareto curve is nonconvex and disconnected. As depicted in Figure~\ref{fig:paretoexists} and Figure~\ref{fig:paretomeasureimage}, it is difficult to obtain precise approximations of the whole image set, in particular for the subset $\F \cap \B_0$, with an ellipse $\B_0:=\{\x \in \R^2 \: :\: ((x_1-13.7)/1.7)^2+((x_2-1.8)/0.5)^2 \leq 1\}$.
Figure~\ref{fig:zoomexists} displays more precise outer approximations of degree 8 (a) and degree 10 (b).
\begin{figure}[!ht]
\centering
\subfigure[$r=4$]{
\includegraphics[scale=\sizesmallfig]{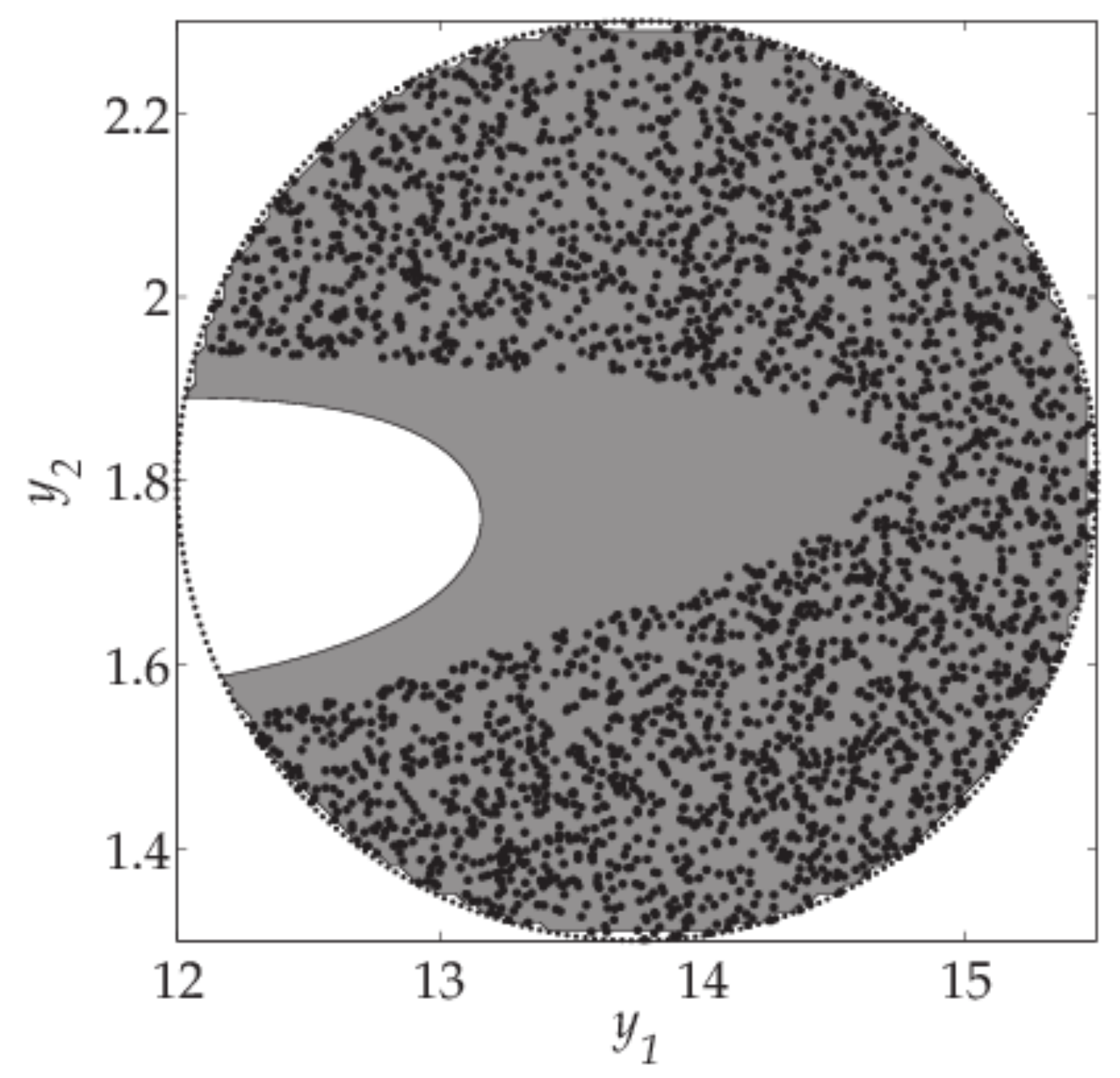}}
\subfigure[$r=5$]{
\includegraphics[scale=\sizesmallfig]{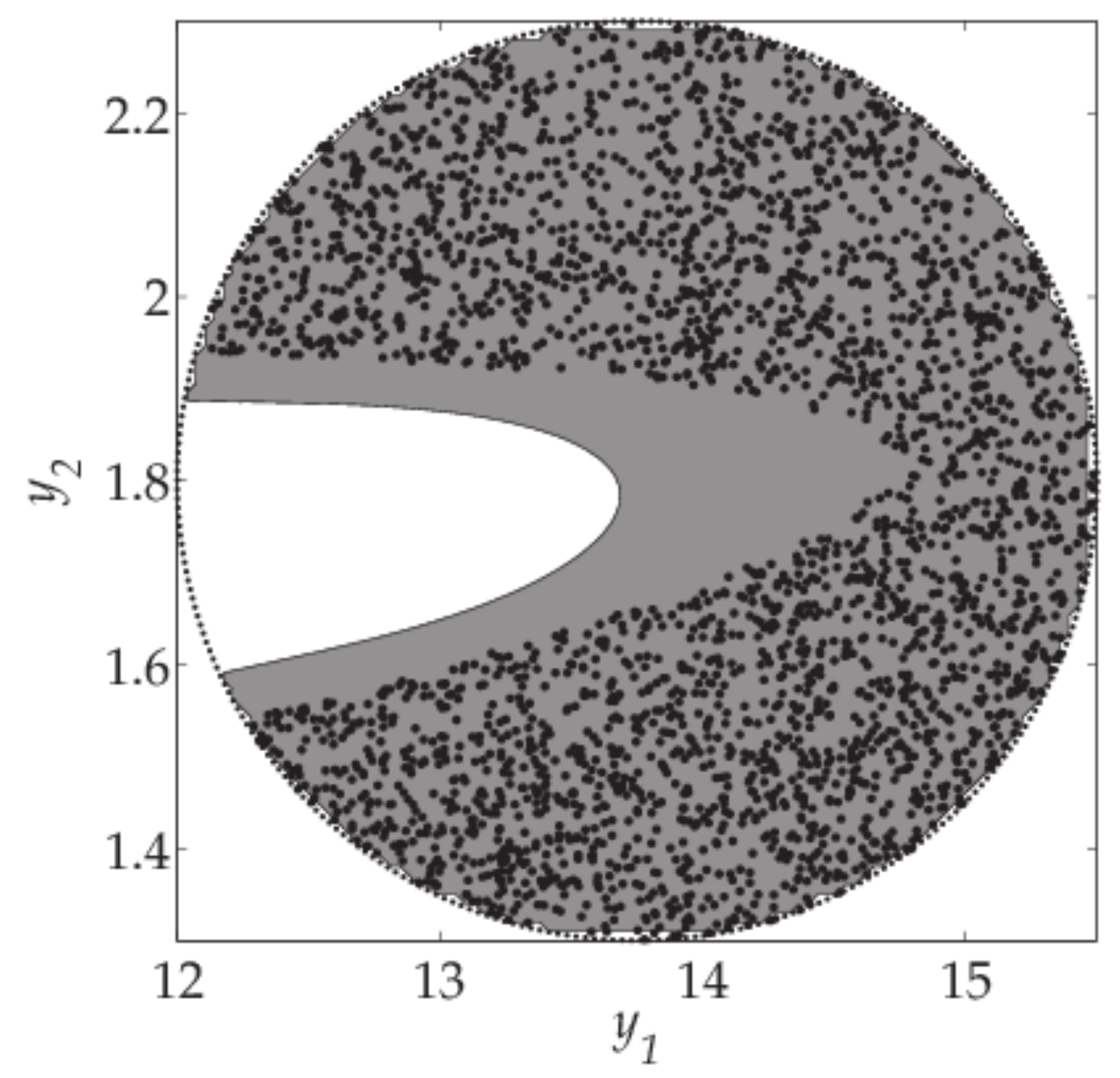}}
 \caption{Outer approximations $\F^1_r$ (light gray) of $\F \cap \B_0$ (black dot samples)
for Example~\ref{ex:pareto}, for $r=4,5$. 
 }	\label{fig:zoomexists}
\end{figure}

\subsection{semi-algebraic image of semi-algebraic sets}
\label{sec:saimage}
Given a semi-algebraic set $\S$ as in~\eqref{eq:defS}, Methods 1 and 2 can be extended to approximate the image of $\S$ under a semi-algebraic application $f = (f_1, \dots, f_m)$. To do so, we follow~\cite{LasPut10} and introduce lifting variables to represent non-polynomial components involved in $f_1, \dots, f_m$, as well as additional polynomial constraints.

Proceeding as in~\cite{LasPut10}, for each semi-algebraic function $f_j$, one introduces additional variables $\x^j := (x_1^j, \dots, x_{t_j}^j)$ such that the graph $\{(\x, f_j(\x)) : \x \in \S \} = \{(\x, x_{t_j}^j) : (\x, \x^j) \in \hat{\S}_j  \} $ for some semi-algebraic set $\hat{\S}_j \subseteq \R^{n + t_j}$. In the end, one works with the lifted set $\hat{\S} :=\{(\x, \x^1, \dots, \x^m) : (\x, \x^j) \in \hat{\S}_j \, , j = 1, \dots, m \,  \}$.

\begin{example}
\label{ex:saimage}
Here, we consider the image of the two-dimensional unit ball $\S := \{\x \in \R^2 : \| \x \|_2^2 \leq 1 \}$ under the semi-algebraic application $f(\x) := (\min (x_1+x_1 x_2, x_1^2), x_2-x_1^3) / 3$. 
Remind that $2 \min (a, b) = a + b - |a - b|$, so that $2 \min (x_1+x_1 x_2, x_1^2) = x_1+x_1 x_2 + x_1^2 - |x_1+x_1 x_2 - x_1^2|$.
To handle the absolute value, we introduce an additional variable $x_3$ together with the equality constraint $x_3^2 = (x_1 + x_1 x_2^2)^2$ and the inequality constraint $x_3 \geq 0$. 
\end{example}
\begin{figure}[!ht]
\centering
\subfigure[$r=1$]{
\includegraphics[scale=\sizetinyfig]{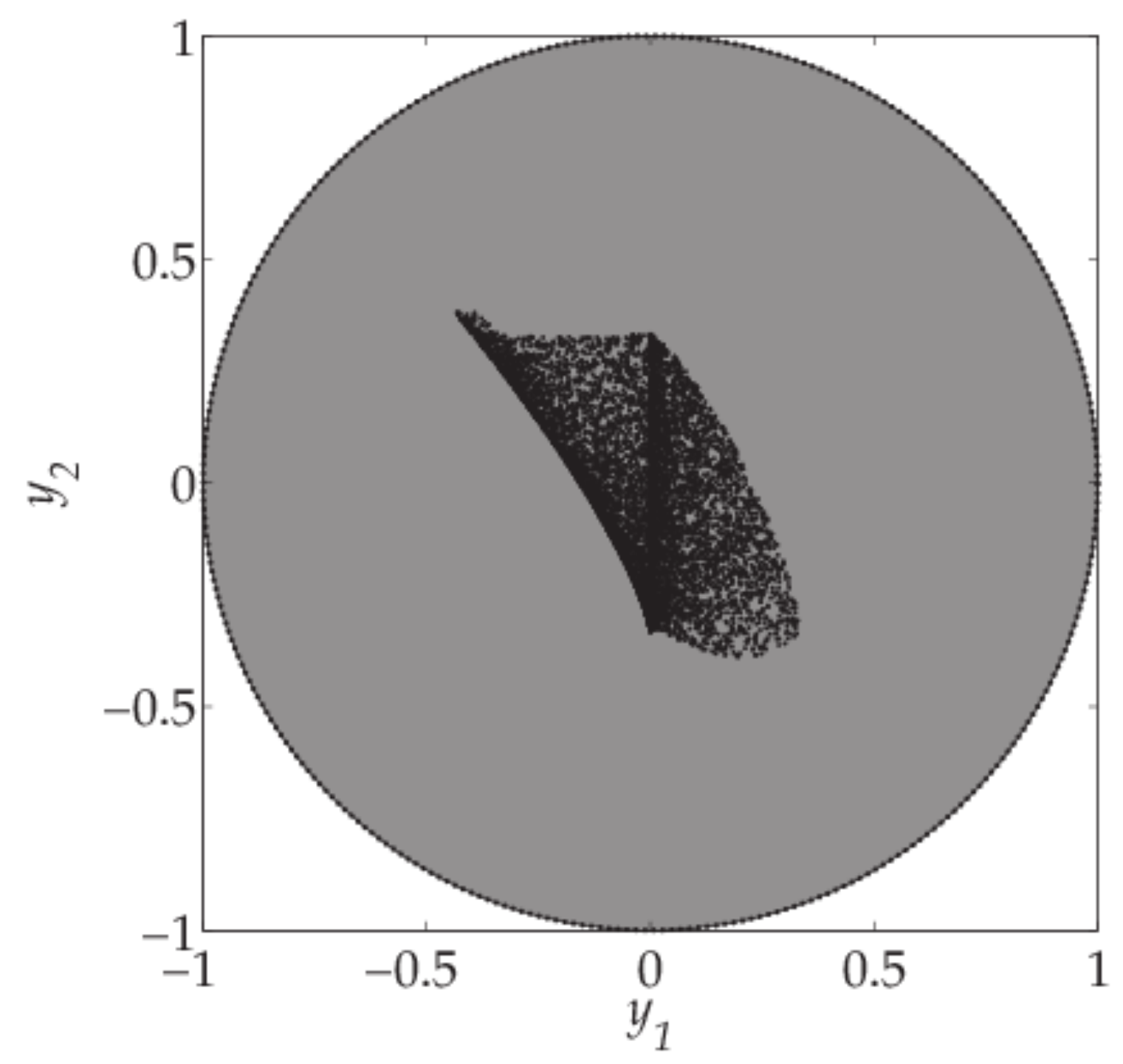}}
\subfigure[$r=2$]{
\includegraphics[scale=\sizetinyfig]{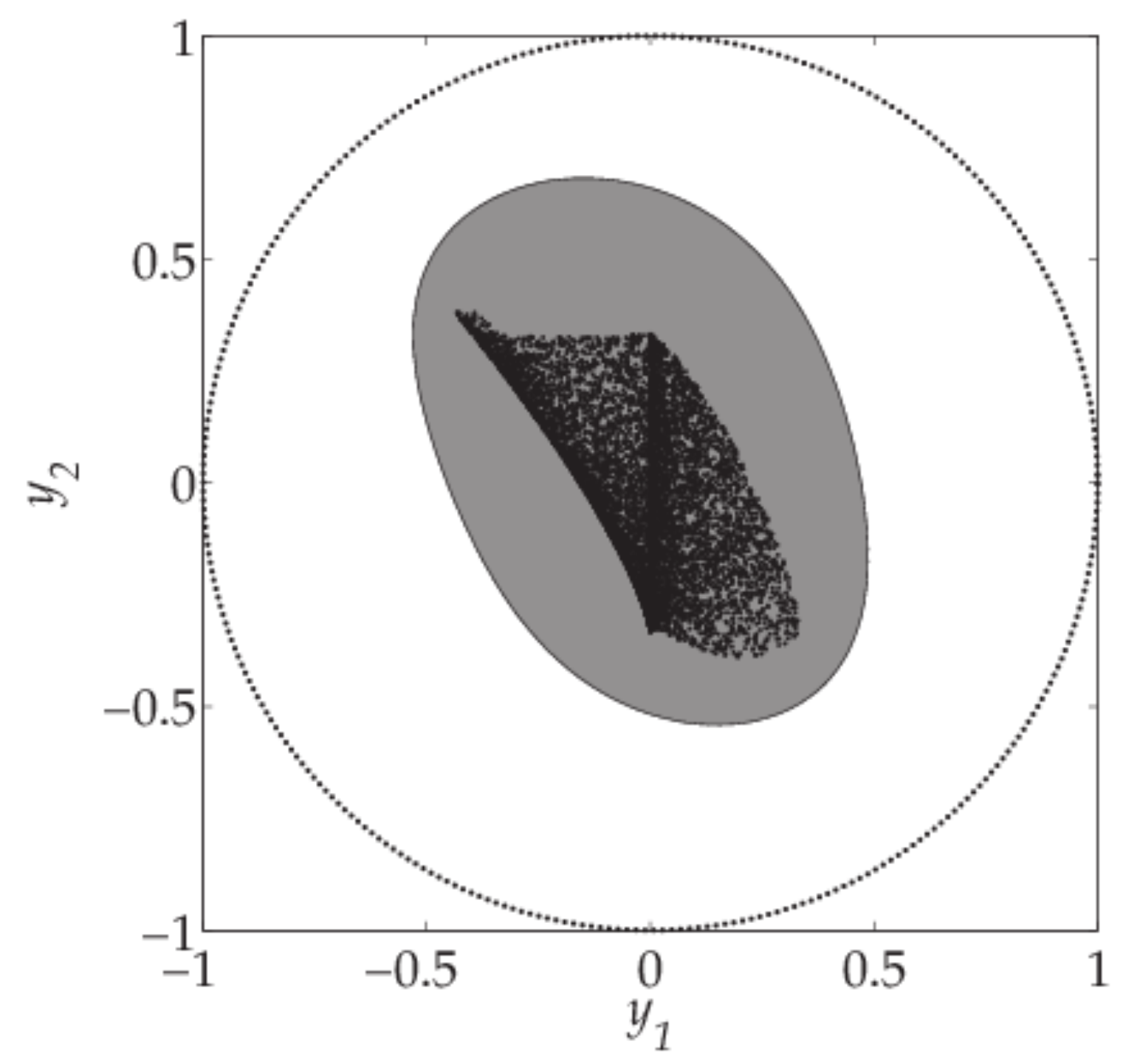}}
\subfigure[$r=3$]{
\includegraphics[scale=\sizetinyfig]{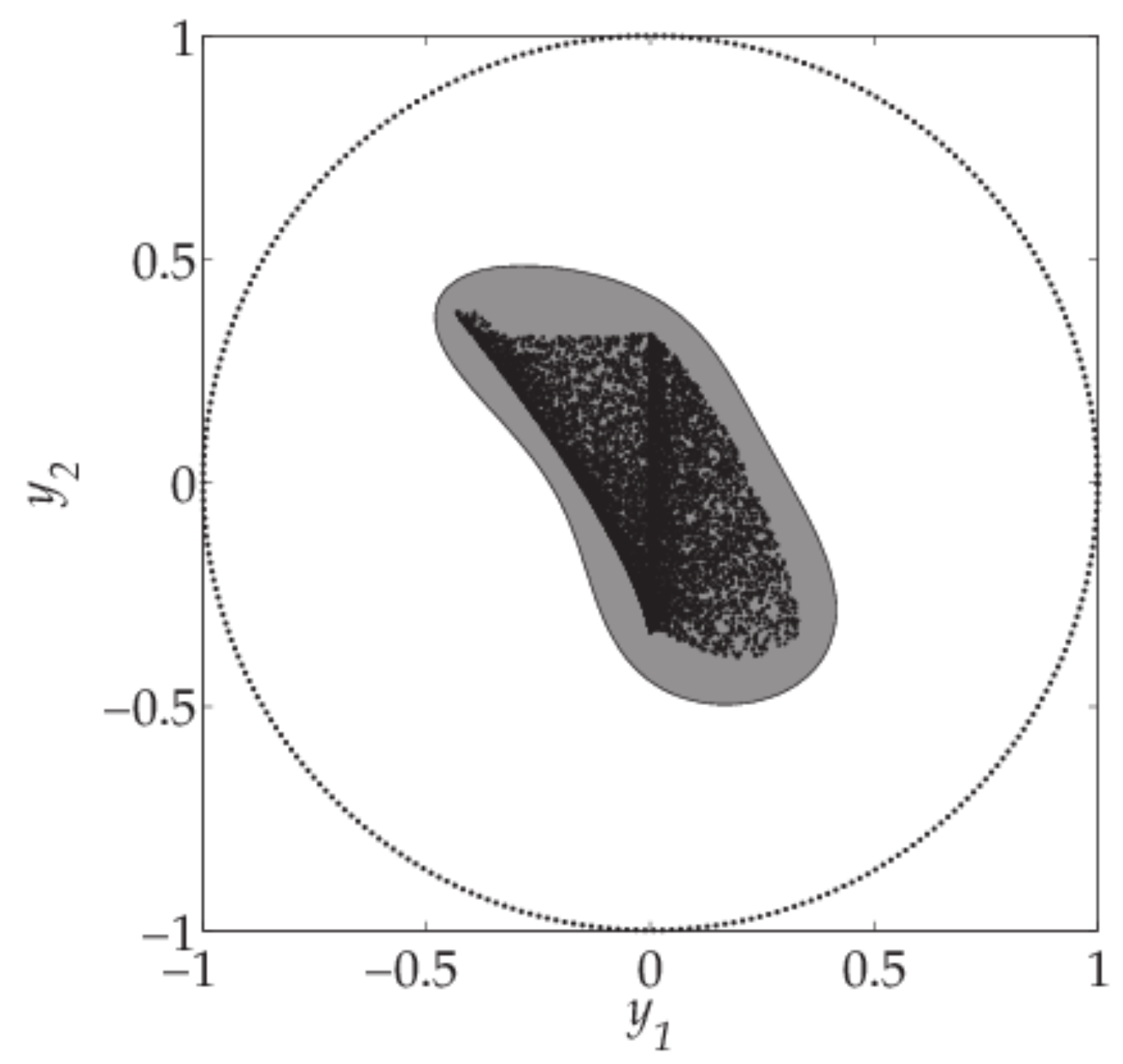}}
\subfigure[$r=4$]{
\includegraphics[scale=\sizetinyfig]{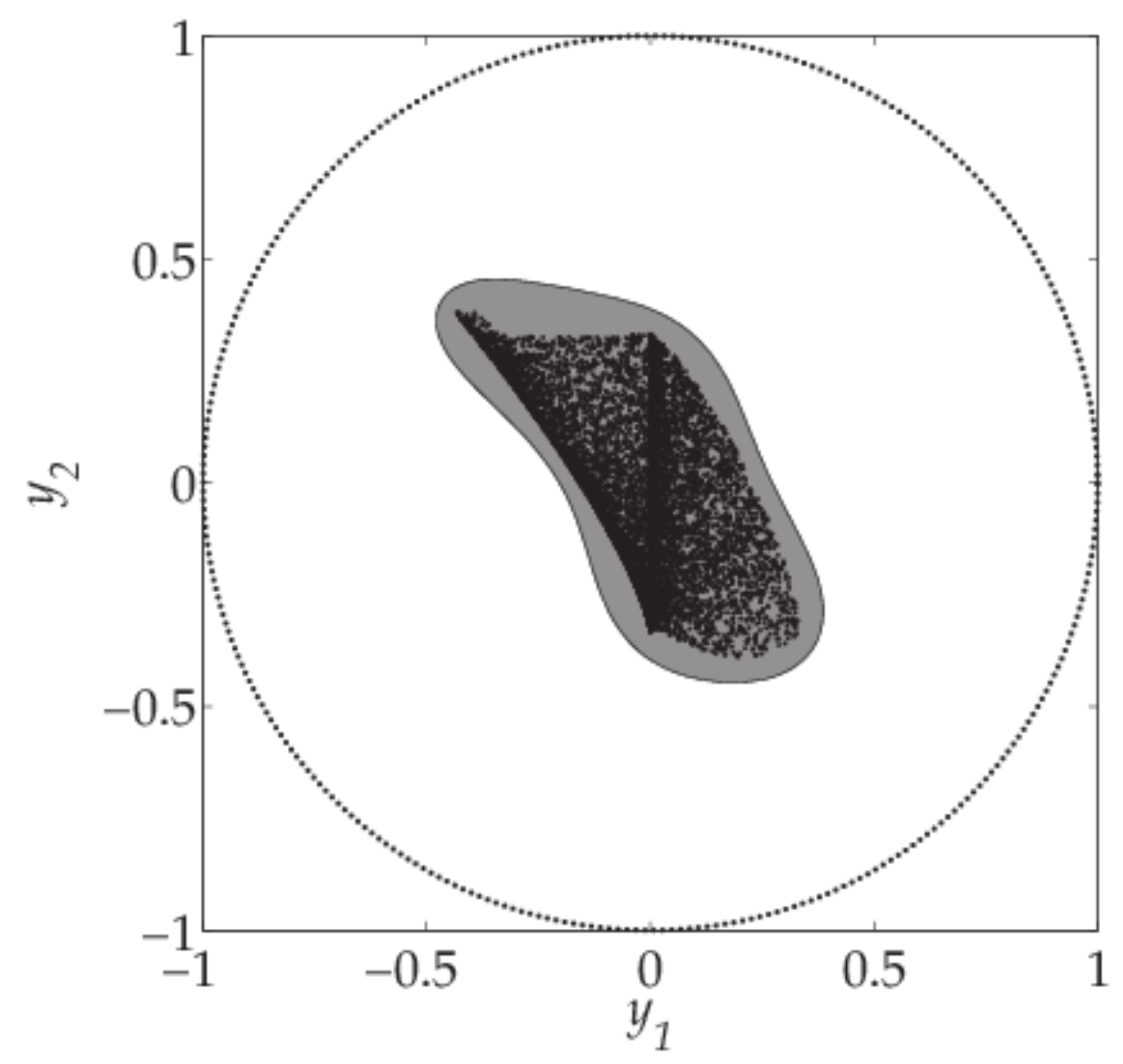}}
 \caption{Outer approximations $\F^1_r$ (light gray) of $\F$ (black dot samples)
for Example~\ref{ex:saimage}, for $r=1,2,3,4$. }	\label{fig:saimageexists}
\end{figure}
\begin{figure}[!ht]
\centering
\subfigure[$r=1$]{
\includegraphics[scale=\sizetinyfig]{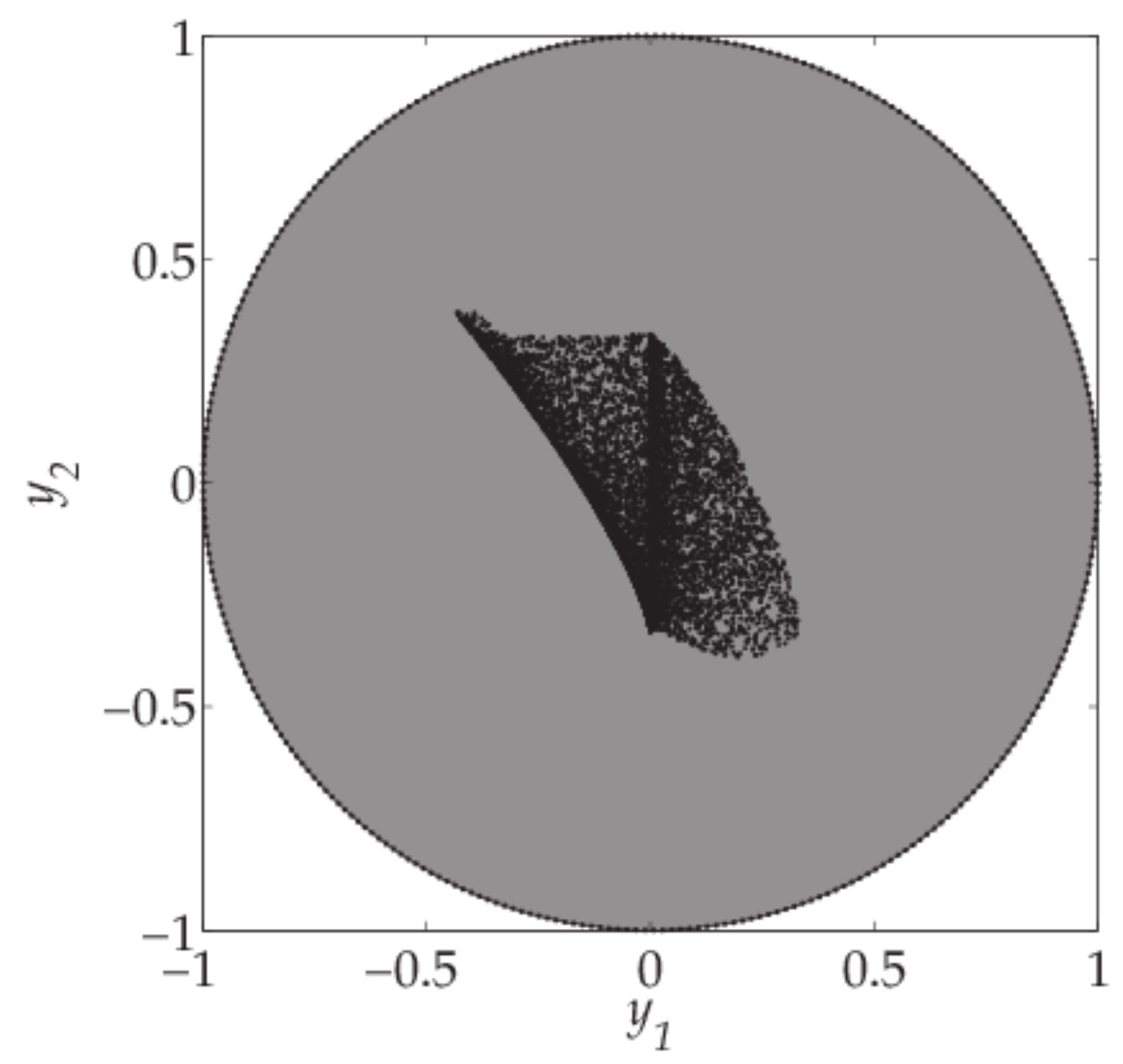}}
\subfigure[$r=2$]{
\includegraphics[scale=\sizetinyfig]{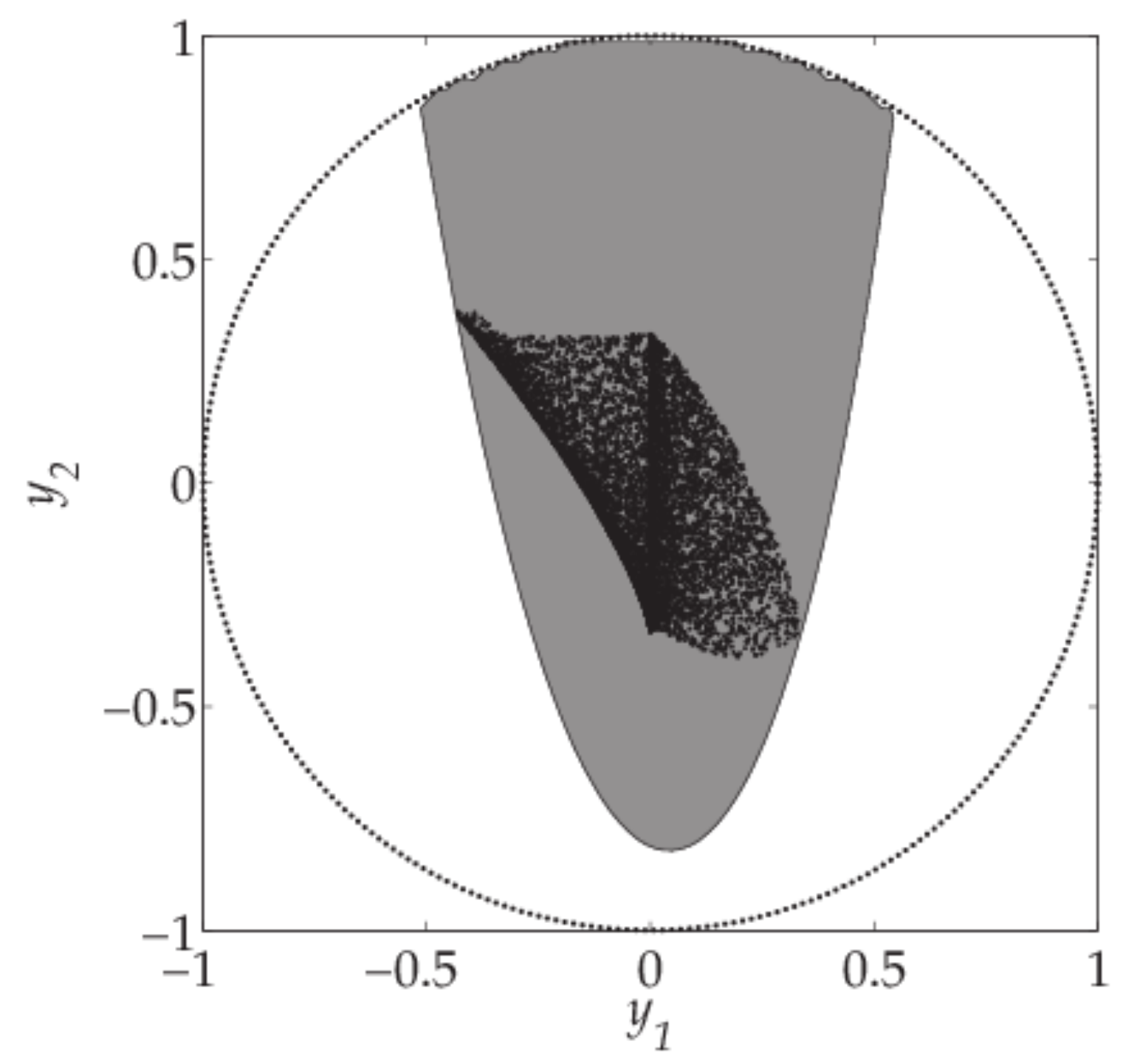}}
\subfigure[$r=3$]{
\includegraphics[scale=\sizetinyfig]{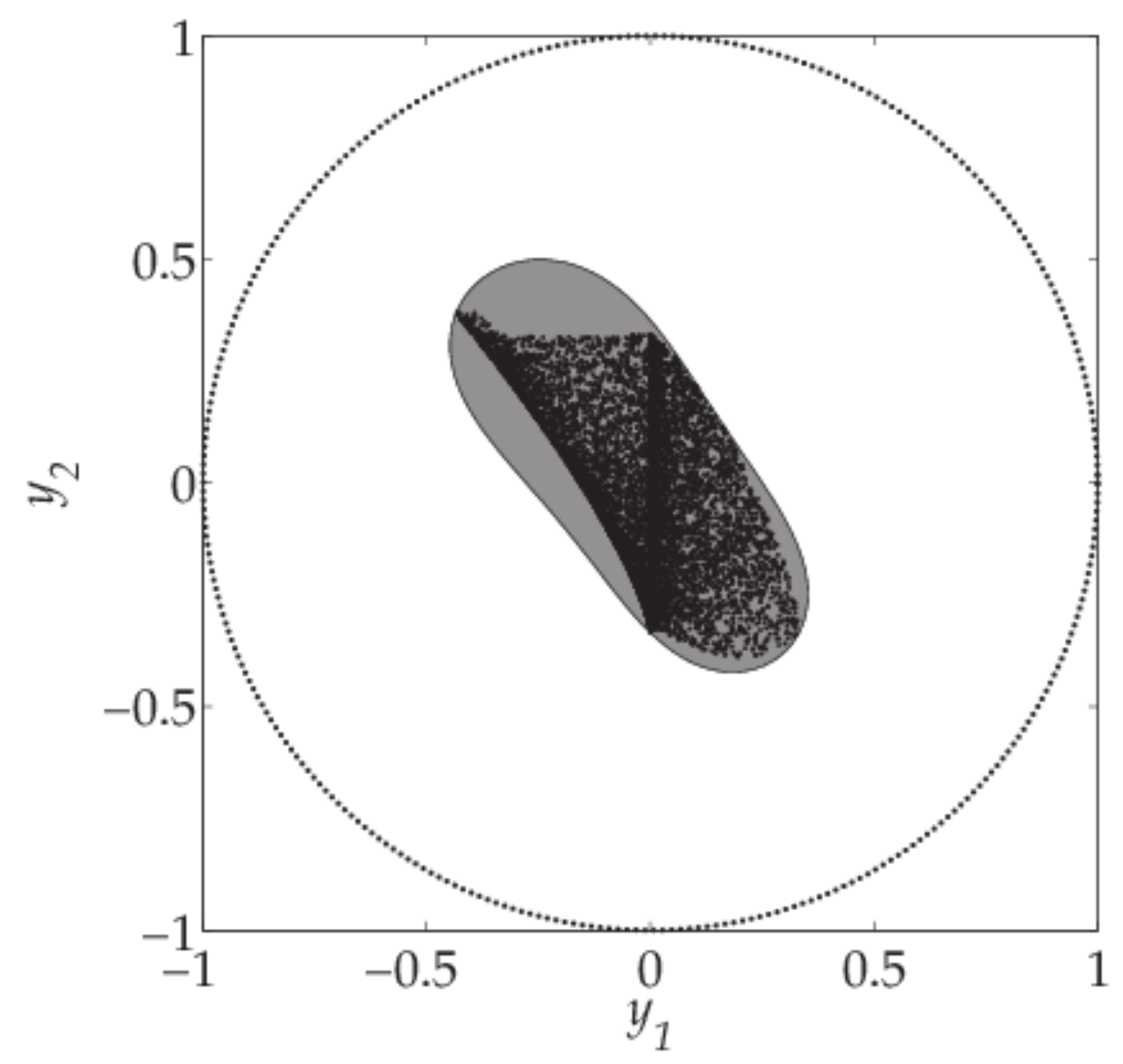}}
\subfigure[$r=4$]{
\includegraphics[scale=\sizetinyfig]{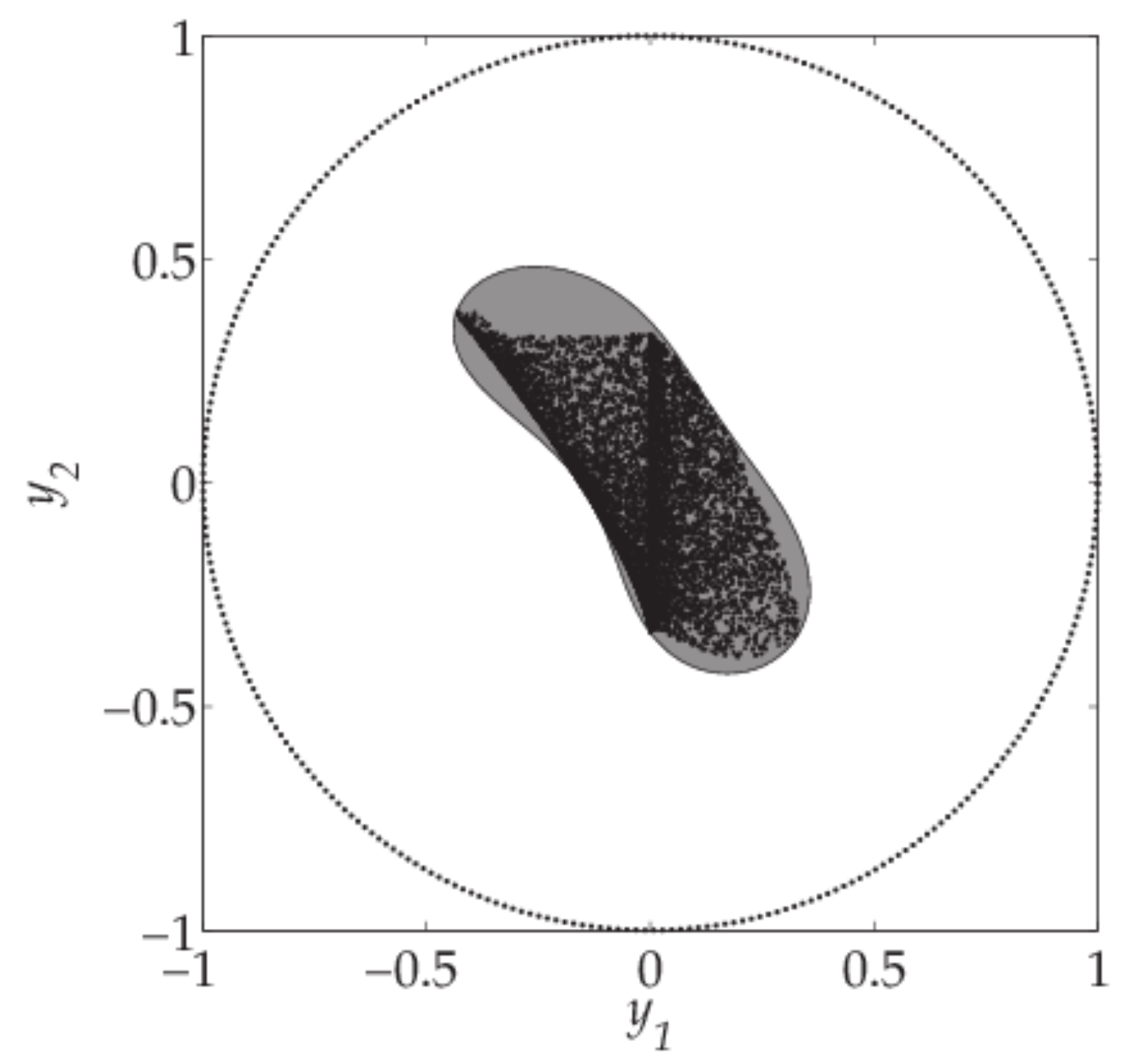}}
 \caption{Outer approximations $\F^2_r$ (light gray) of $\F$ (black dot samples)
for Example~\ref{ex:saimage}, for $r=1,2,3,4$.  }	\label{fig:saimagemeasureimage}
\end{figure}

As for Example~\ref{ex:ballimage}, we report in Table~\ref{table:saimage} the data related to the semidefinite problems solved by {\sc Mosek} to compute approximations of increasing degrees, while using Method 1, Method 2 and Method 2 with the lifting strategy. Method 2 fails to compute polynomial approximations of degree higher than eight ($r=4$), the system running out of memory (indicated with the symbol ``$-$''). The lifting strategy described in Section~\ref{sec:lift2} overcomes this practical limitation. 

\begin{table}[!ht]
\begin{center}
\caption{Comparison of timing results for Example~\ref{ex:saimage}}
\begin{tabular}{p{2.3cm}|c|ccccc}
\hline
\multicolumn{2}{c|}{relaxation order $r$}
 & 1& 2 & 3 & 4 & 5
\\
\hline            
\multirow{3}{*}{Method 1} & vars &  $66$ & $438$  & $3137$ & $16993$ &  $73213$ \\
& size & 45 & 226 & 1008 & 3387 & 9075\\
& time  (s) & $0.68 $ & $0.85 $ &  $1.16 $ & $14.41 $ & $ 147.32 $\\
\hline
\multirow{3}{*}{Method 2} &  vars & $715$  & $12243$ & $89695$  & $-$ & $-$\\
& size & 295 & 1957 & 6283 & $-$ & $-$ \\
 & time  (s) & $0.83 $ & $3.29 $ &  $52.55 $ & $-$ & $-$\\
\hline
\multirow{3}{2.3cm}{Method 2 with lifting} & vars &  $78$ & $540$ & $3788$ & $20216$  & $87475$ \\
& size & 51 & 273 & 1262 & 4247 & 11508 \\
& time (s) & $0.68 $ & $0.96 $ &  $1.83 $ & $14.44 $ & $174.80 $ \\
\hline
\end{tabular}
\label{table:saimage}
\end{center}
\end{table}
\section{Discussion and conclusion}
In this work, we propose two methods to approximate polynomial images of basic compact semi-algebraic sets, a numerical approximation alternative to exact computer algebra methods when the latter are too computationally demanding. 
In its present form, this methodology is applicable to problems of modest size, except if some sparsity can be taken into account, as explained earlier. 
Therefore, to handle larger size problems, the methodology needs to be adapted. 
A topic of further investigation is to search for alternative positivity certificates, less demanding than the SOS certificates used in this paper but more efficient than the LP based certificates as defined in~\cite{Handelman1988,Vasilescu}. On the one hand, the latter are appealing since they yield a hierarchy of LP relaxations (as opposed to semidefinite relaxations as in this paper). Moreover, today's LP solvers can handle huge size LP problems, which is far from being the case for semidefinite solvers. On the other hand, it has been shown in~\cite{lasserre2009moments} that generically finite convergence cannot occur for convex problems, except for the linear case. Finally, it could be interesting to look at various compactification procedures to study how the methodology could be generalized to non-compact situations.

\section*{Acknowledgments}
This work was partly funded by an award of the {\em Simone and Cino del Duca foundation} of Institut de France, a grant of the Gaspard Monge program for optimisation and operations research (PGMO), funded by the {\em Fondation Math\'ematiques Jacques Hadamard}. The authors would like to thank Mohab Safey El Din, Pierre-Lo\"ic Garoche, J\'erome Roussel, Alain Sarlette and Xavier Thirioux for fruitful discussions. The authors are also very grateful to the two reviewers for their careful reading, their criticism as well as their detailed feedback.
\bibliographystyle{alpha}

\begin{thebibliography}{}

\end{thebibliography}


\begin{thebibliography}{WKKM06}

\bibitem[AA00]{mosek}
Erling~D. Andersen and Knud~D. Andersen.
\newblock {The Mosek Interior Point Optimizer for Linear Programming: An
  Implementation of the Homogeneous Algorithm}.
\newblock In Hans Frenk, Kees Roos, Tamás Terlaky, and Shuzhong Zhang,
  editors, {\em High Performance Optimization}, volume~33 of {\em Applied
  Optimization}, pages 197--232. Springer US, 2000.

\bibitem[AFP00]{AFP00}
Luigi Ambrosio, Nicola Fusco, and Diego Pallara.
\newblock {\em Functions of bounded variation and free discontinuity problems}.
\newblock Oxford mathematical monographs. Clarendon Press, Oxford, New York,
  2000.
\newblock Autres tirages : 2006.

\bibitem[Ash72]{Ash:72RAP}
Robert~B. Ash.
\newblock {\em Real Analysis and Probability}.
\newblock Academic Press, New York, 1972.

\bibitem[Bar02]{alexander2002course}
Alexander Barvinok.
\newblock {\em A Course in Convexity}.
\newblock Graduate studies in mathematics. American Mathematical Society, 2002.

\bibitem[{Bas}14]{Basu14survey}
S.~{Basu}.
\newblock {Algorithms in Real Algebraic Geometry: A Survey}.
\newblock {\em ArXiv e-prints}, September 2014.

\bibitem[BPR96]{Basu96}
Saugata Basu, Richard Pollack, and Marie-Fran\c{c}oise Roy.
\newblock {On the Combinatorial and Algebraic Complexity of Quantifier
  Elimination}.
\newblock {\em J. ACM}, 43(6):1002--1045, November 1996.

\bibitem[BPR06]{Basu06}
Saugata Basu, Richard Pollack, and Marie-Fran\c{c}oise Roy.
\newblock {\em {Algorithms in Real Algebraic Geometry (Algorithms and
  Computation in Mathematics)}}.
\newblock Springer-Verlag New York, Inc., Secaucus, NJ, USA, 2006.

\bibitem[CK77]{CravenKoliha77}
B.~D. Craven and J.~J. Koliha.
\newblock Generalizations of {F}arkas' theorem.
\newblock {\em SIAM J. Math. Anal.}, 8(6):983--997, 1977.

\bibitem[Col74]{Collins74}
George~E. Collins.
\newblock Quantifier elimination for real closed fields by cylindrical
  algebraic decomposition--preliminary report.
\newblock {\em SIGSAM Bull.}, 8(3):80--90, August 1974.

\bibitem[GJ88]{Grigorev1988}
D.~Yu. Grigoriev and N.N.~Vorobjov Jr.
\newblock Solving systems of polynomial inequalities in subexponential time.
\newblock {\em Journal of Symbolic Computation}, 5(1–2):37 -- 64, 1988.

\bibitem[Han88]{Handelman1988}
David Handelman.
\newblock Representing polynomials by positive linear functions on compact
  convex polyhedra.
\newblock {\em Pacific Journal of Mathematics}, 132(1):35--62, 1988.

\bibitem[HD12]{journals/jsc/HongD12}
Hoon Hong and Mohab Safey~El Din.
\newblock Variant quantifier elimination.
\newblock {\em J. Symb. Comput.}, 47(7):883--901, 2012.

\bibitem[HLS09]{HLS08vol}
D.~Henrion, J.~Lasserre, and C.~Savorgnan.
\newblock Approximate volume and integration for basic semialgebraic sets.
\newblock {\em SIAM Review}, 51(4):722--743, 2009.

\bibitem[Jah10]{jahn:2010:vector}
J.~Jahn.
\newblock {\em Vector Optimization: Theory, Applications, and Extensions}.
\newblock Springer, 2010.

\bibitem[Las06]{Las06Sparse}
Jean~B. Lasserre.
\newblock Convergent sdp-relaxations in polynomial optimization with sparsity.
\newblock {\em SIAM Journal on Optimization}, 17(3):822--843, 2006.

\bibitem[Las09]{lasserre2009moments}
J.B. Lasserre.
\newblock {\em {Moments, Positive Polynomials and Their Applications}}.
\newblock Imperial College Press optimization series. Imperial College Press,
  2009.

\bibitem[Las10]{Lasserre:2010:JMA}
Jean~B. Lasserre.
\newblock A ``joint+marginal'' approach to parametric polynomial optimization.
\newblock {\em SIAM Journal on Optimization}, 20(4):1995--2022, 2010.

\bibitem[Las15]{Las13}
JeanB. Lasserre.
\newblock Tractable approximations of sets defined with quantifiers.
\newblock {\em Mathematical Programming}, 151(2):507--527, 2015.

\bibitem[LN07]{LasserreNetzer07SOS}
Jean~B. {Lasserre} and Tim {Netzer}.
\newblock {SOS approximations of nonnegative polynomials via simple high degree
  perturbations.}
\newblock {\em {Math. Z.}}, 256(1):99--112, 2007.

\bibitem[LP10]{LasPut10}
Jean~B. Lasserre and Mihai Putinar.
\newblock Positivity and optimization for semi-algebraic functions.
\newblock {\em SIAM Journal on Optimization}, 20(6):3364--3383, 2010.

\bibitem[Lue97]{Luenberger97}
David~G. Luenberger.
\newblock {\em {Optimization by Vector Space Methods}}.
\newblock John Wiley \& Sons, Inc., New York, NY, USA, 1st edition, 1997.

\bibitem[Lö04]{YALMIP}
J.~Löfberg.
\newblock Yalmip : A toolbox for modeling and optimization in {MATLAB}.
\newblock In {\em Proceedings of the CACSD Conference}, Taipei, Taiwan, 2004.

\bibitem[MHL14]{orl14}
Victor Magron, Didier Henrion, and Jean-Bernard Lasserre.
\newblock {Approximating Pareto curves using semidefinite relaxations}.
\newblock {\em Operations Research Letters}, 42(6–7):432 -- 437, 2014.

\bibitem[RF10]{Royden}
H.L. Royden and P.~Fitzpatrick.
\newblock {\em {Real Analysis}}.
\newblock Featured Titles for Real Analysis Series. Prentice Hall, 2010.

\bibitem[Tar51]{Tarski51}
A.~Tarski.
\newblock {\em A Decision Method for Elementary Algebra and Geometry}.
\newblock University of California Press, 1951.

\bibitem[Trn05]{Trnovska:2005:SDP}
M.~Trnovsk\'a.
\newblock Strong duality conditions in semidefinite programming.
\newblock {\em Journal of Electrical Engineering}, 56(12/s):1--5, 2005.

\bibitem[Vas]{Vasilescu}
F.-H. Vasilescu.
\newblock {Spectral measures and moment problems.}
\newblock {Gheondea, Aurelian (ed.) et al., Spectral analysis and its
  applications. Ion Colojoar\u a anniversary volume. Bucharest: Theta. Theta
  Series in Advanced Mathematics 2, 173-215 (2003).}

\bibitem[VB94]{Vandenberghe94sdp}
Lieven Vandenberghe and Stephen Boyd.
\newblock Semidefinite programming.
\newblock {\em SIAM Review}, 38:49--95, 1994.

\bibitem[W\"76]{Wuthrich76}
H.~R. W\"{u}thrich.
\newblock {Ein Entscheidungsverfahren F\"{u}r Die Theorie Der Reell-
  Abgeschlossenen K\"{o}rper}.
\newblock In {\em Komplexit\"{a}t Von Entscheidungsproblemen, Ein Seminar},
  pages 138--162, London, UK, UK, 1976. Springer-Verlag.

\bibitem[WCSF01]{Wilson2001}
Benjamin Wilson, David Cappelleri, Timothy~W. Simpson, and Mary Frecker.
\newblock {Efficient Pareto Frontier Exploration using Surrogate
  Approximations}.
\newblock {\em Optimization and Engineering}, 2:31--50, 2001.

\bibitem[WKKM06]{Waki06sumsof}
Hayato Waki, Sunyoung Kim, Masakazu Kojima, and Masakazu Muramatsu.
\newblock Sums of squares and semidefinite programming relaxations for
  polynomial optimization problems with structured sparsity.
\newblock {\em SIAM Journal on Optimization}, 17(1):218--242, 2006.

\end{thebibliography}

\end{document}